\newtheorem{proposition}{Proposition}[section]
\newtheorem{definition}[proposition]{Definition}
\newtheorem{lemma}[proposition]{Lemma}
\newtheorem{corollary}[proposition]{Corollary}
\newdimen\AAdi%
\newbox\AAbo%
\def\AAk#1#2{\setbox\AAbo=\hbox{#2}\AAdi=\wd\AAbo\kern#1\AAdi{}}%
\def\AAr#1#2#3{\setbox\AAbo=\hbox{#2}\AAdi=\ht\AAbo\raise#1\AAdi\hbox{#3}}%
\newcommand {\CA}{{\cal A}}
\newcommand {\CB}{{\cal B}}
\newcommand {\CC}{{\cal C}}
\newcommand {\CF}{{\cal F}}
\newcommand {\CO}{{\cal O}}
\newcommand {\CQ}{{\cal Q}}
\newcommand {\CV}{{\cal V}}
\newcommand{\disp}{\displaystyle}
\newcommand{\eps}{\varepsilon}
\newcommand{\8}{\infty}
\def\lu{\lambda^u}  
\def\ls{\lambda^s}
\def\m1{{-1}}
\newcommand{\ninf}{{n\rightarrow+\8}}
\def\s{\sigma}
\def\l{\lambda}
\newcommand{\wt}{\widetilde}
\newcommand{\ul}[1]{\underline{#1}}
\newcommand{\N}{\Bbb{N}}
\newcommand{\R}{\Bbb{R}}
\newcommand{\Z}{\Bbb{Z}}
\def\picture #1 by #2 (#3){  %\leavevmode
\vbox to #2{
\hrule width #1 height 0pt depth 0pt
\vfill
\special{picture #3} %this is the low level interface
}}
\def\scaledpicture #1 by #2 (#3 scaled #4){{
\dimen0=#1 \dimen1=#2
\divide\dimen0 by 1000 \multiply\dimen0 by #4
\divide\dimen1 by 1000 \multiply\dimen1 by #4
\picture \dimen0 by \dimen1 (#3 scaled #4)}}
\theoremstyle{definition}
\newtheorem{remark}{Remark}
\newenvironment{proofof}[1]{\medskip \noindent{\bf Proof of #1.}}{ \hfill\qed\\
}
\newcommand{\bm}{\beta_{max}}
\newcommand{\am}{\alpha_{max}}
\newcommand{\be}{\beta}
\newcommand{\al}{\alpha}
\newcommand{\ie}{{\it i.e.} }
\newcommand{\mr}{\mathfrak{R}}
\def\mbf#1{\mathbf{#1}}
\begin{document}
\synctex=1
%\floatplacement{figure}{H}
\title{Kupka-Smale diffeomorphisms at the boundary of uniform hyperbolicity: a model}
\author{ Renaud Leplaideur and Isabel Lug\~ao Rios}

%\address{Laboratoire de Math\'ematiques,
%UMR 6205,
% Universit\'e de
%Bretagne Occidentale, 6 av. Victor Le Gorgeu, CS 93837, F-29238 BREST Cedex 3;
%renaud.leplaideur@univ-brest.fr}

%\thispagestyle{empty}

%\thanks{.}

\maketitle

\begin{abstract}
We construct an explicit example of family of non-uniformly hyperbolic diffeomorphisms, at the boundary of the set of uniformly hyperbolic systems, with  one orbit of cubic heteroclinic tangency.  One of the  leaves involved  in this heteroclinic tangency is periodic, and there is a certain degree of freedom for the choice of the second one. For a non-countable set of choices, this leaf is not periodic and the diffeomorphism is Kupka-Smale:
every periodic point is hyperbolic and the intersections of stable and unstable leaves of periodic points are transverse. 
As a consequence  of our construction, the map is H\"older conjugated to a subshift of finite type, thus every H\"older potential admits a unique associated equilibrium state. 
\end{abstract}

%%%%%%%%%%%%%%%%%%%
%%%%%%%%%%%%%%%%%%%%%%%%%%%%%%%%%%%%%%%%%%
\section{Introduction}

%%%%%%%%%%%%%%%%%%%%%%%%%%%%

\subsection{Background}

In this work we present an explicit example of a family of diffeomorphisms of the plane having a heteroclinic cubic tangency inside the limit set. Each diffeomorphism of the family is at the boundary of the set of uniformly hyperbolic diffeomorphisms. All the  periodic points are hyperbolic, and the tangency is associated to a periodic leaf and a second leaf that can be chosen periodic or not. 

The construction is part of a project to study dynamic and ergodic properties of bifurcating systems. 
One considers systems at the boundary of the uniformly hyperbolic ones, in such a way that the lack of uniform hyperbolicity appears as a localized phenomenon, and the system satisfies a weaker notion of hyperbolicity (partial hyperbolicity, dominated splitting or non-uniform hyperbolicity).  For these systems one is interested in features as Hausdorff dimension, conformal measures and equilibrium states, which are very well understood for uniformly hyperbolic systems, but still not established for general systems.  The literature is very rich in that domain and we refer the reader to some references and other references therein: \cite{palis-takens, bonatti-diaz-viana, yoccoz-palis, generic} for general and complete surveys of the bifurcation problems, \cite{young-benedicks, alves, alves-bonatti-viana, hu} for the specific problem of SRB measures, \cite{buzzi-thermo, bruin-todd, oliveira, pinheiro, senti-takahasi, varandas} for thermodynamic formalism and Hausdorff dimensions.

In previous works of authors, the mentioned problems were addressed by studying explicit examples or classes of examples. The general case proved not to be  treatable in one single approach, since the answer for some of the questions (for instance, the uniqueness of equilibrium states) is intimately related with the cause of the bifurcation. For this specific problem, there is an indication that the loss of transversality is not as bad as the weakening of expansion and contraction rates, in the sense that many of the results for uniformly hyperbolic systems can be reproduced in the first case, see \cite{isareno1, isareno2}, but not in the second case, see \cite{LOR}.

In \cite{isareno1} and \cite{isareno2}, the authors considered a bifurcating horseshoe displaying a quadratic (internal) homoclinic tangency associated to a fixed hyperbolic saddle. Since the manifolds of the saddle were accumulated by leaves of the set on just one side, it was possible to obtain the tangency as the first bifurcation. Most of the leaves of the horseshoe, though, are accumulated by other leaves on both sides, and a quadratic tangency involving those leaves cannot be reached as the first bifurcation of a one parameter family. In order to deepen our understanding about bifurcating systems, it is important that we are able to study the effects of the presence of a tangency associated to a non periodic leaf.

Systems with cubic heteroclinic tangencies were studied in \cite{Bonatti-Diaz-Vuillemin}, \cite{Enrich} and  \cite{horita-muniz-sabini}. In the two first papers, the authors find cubic tangencies as first bifurcations of Anosov systems. In the third work, the authors prove that the cubic tangencies associated to non-periodic points are somehow prevalent as first bifurcations of Anosov systems, and they also find them in the context of horseshoes. With their results, they prove a conjecture due to Bonatti and Viana (see Conjecture 3.33 in \cite{bonatti-diaz-viana}), that these bifurcations are abundant. In view of their results, it is even more important to understand these systems. 

The main difference between the work of Horita {\it et al.} and the present paper, besides the completely different techniques, is that we have explicit diffeomorphisms, and a complete control of the itinerary of the tangency, which is an important tool for the pursuit of the ergodic properties. 

We point out that there is a Cantor set of choices for one of the leaves involved in the heteroclinic tangency, and for a non-countable number of them, it is non-periodic. The possible choices of the leaves involved in the bifurcation include periodic leaves and leaves with chaotic behaviour. This allows a very rich set of possible dynamic phenomena.

Since the systems studied here are not uniformly hyperbolic, one of the main difficulties is to prove the existence of well defined expanding and contracting directions (for points ouside the critical orbit), and to set the existence and regularity of the invariant manifolds. This is done through the construction of hyperbolic conefields for points that are not in the tangency orbit, and a geometric study of the return maps. This amount of hyperbolicity is enough to prove the existence and uniqueness of equilibrium states for H\"older continuous potentials. As in the case of quadratic tangencies (see \cite{isareno2}), one important potential, the unstable Jacobian, is not in this class. This potential is being considered in a work in progress.

There are two questions we would like to raise. First, are there Kupka-Smale  diffeomorphisms of the plane with quadratic (non periodic) tangencies? Second, through our attempting to find a cubic tangency allowing a good control of hyperbolicity estimates, we ruled out many possibilities. It seems that the low degree terms involving $x$, $y^{3}$ and $yx$ must be the only ones present up to degree 3. Different choices led to increased difficulties to produce the necessary estimates. Is there an intrinsic  reason for that, or is it just an inadequacy of the method?

%%%%%%%%%%%%%%%%%%%%%%%%%%%%%%%%%
\subsection{Statement of results}
\label{statements}
Our main result is the following:

\medskip
\noindent
{\bf Theorem}
{\it Any diffeomorphism $F$  of the family $\CA$ to be defined later, satisfies the following properties:
\begin{enumerate}
\item $F$ is at the boundary of the set of uniformly hyperbolic diffeomorphisms, having an orbit of heteroclinic tangency associated to two points in the limit set.
\item It is topologically conjugated to the subshift of finite type with nine symbols and the transition matrix given in \eqref{trans.matrix}.
\item Every point $x$ in the maximal invariant set $\disp\Lambda:=\bigcap_{k\in\Z} F^{-k}([0,1]^{2})$ and out of the orbit of tangency is hyperbolic:
there is an invariant splitting $T_x\R^{2}=E^{u}(x)\oplus E^{s}(x)$, where non-zero vectors in $E^{u}(x)$  expand exponentially,  and vectors in $E^{s}(x)$ contract exponentially.
\item These subbundles are integrable in the sense that every $x\in \Lambda$ admits stable and unstable manifolds, tangent to $E^{s}(x)$ and $E^{u}(x)$, respectively. 
\item The critical orbit is the tangent intersection $W^{u}(P)\cap W^{s}(Q)$. The point $P$ is  a fixed point for $F$ and the point $Q\in\Lambda$ can be chosen in a Cantor set of stable leaves.  
\end{enumerate}
}

The methods here differ from the method in \cite{horita-muniz-sabini}. There, they use parameter exclusion techniques applied to a family of bifurcating systems, to select those with no tangencies between periodic leaves. 
Here, we give the explicit form of the perturbation, describing how the it produces a cubic tangency. The main difficulty in our case is to prove hyperbolicity via the construction of conefields. Our strategy follows the approach in \cite{rios1} for the construction of the map and the conefields, and \cite{isareno1} to get the local stable and unstable manifolds. This explicit construction allows us to choose some properties for the orbit of $Q$. It is, for instance, possible to choose that point to be periodic or non-periodic.  

As a consequence of the exponential rates of expansion and contraction along the invariant manifolds, the conjugacy with the subshift of finite type is a H\"older continuous homeomorphism, then we have the following result.

 \medskip
\noindent
{\bf Corollary}
{\it For every H\"older continuous potential $\phi: \Lambda \to \R$, there exists a unique equilibrium state supported on $\Lambda$.
}

This paper is organized as follows. In Section \ref{sec-def-F} we define the maps $F$ which depend on several parameters. In Section \ref{sec-hyper-F} we construct the stable and unstable conefields and show that the map is (non-uniformly) hyperbolic. In Section \ref{sec-mani} we prove the existence of the stable and unstable manifolds. In Section \ref{sec-border} we prove that the family is in the boundary of non-uniformly hyperbolic diffeomorphisms.

%%%%%%%%%%%%%%%%%%%%%%%%%%%%%%%%%%%%%%%%%%%%%
\section{The map and the parameters}\label{sec-def-F}
\subsection{The initial map $F_{0}$}

In this section, we define the maps $F$. First, choose
$0<\lambda<\frac13$, $\s>3$  and $\rho>3$  such that 

\begin{equation}
\label{equ-lambda-sigma}
-1.2<\frac{\log\lambda}{\log\rho}<\frac{\log\lambda}{\log\s}<-1\quad 
\end{equation}

Note that this implies $\lambda\rho<1$ and $\lambda\s<1$. 

Consider a  piecewise linear horseshoe map $F_0$ in $Q=[0,1]^2$ with three components,
as in Figure \ref{fig-partition}. The rectangles in the intersection $F_0(Q)\cap
F_0^{-1}(Q)$ (they will be called {\it rectangles of first generation}) are labeled 
$R_1$ to $R_9$, from the left to the right and from the top to the bottom. They
are assumed to be at distance at least $d$ from each other, and their size (horizontal and vertical) is assumed to be at most $l_0$, with $3l_0+2d\approx 1$.

The horizontal and vertical directions are invariant by $DF_0$. The horizontal
factor of contraction is $\lambda$ for all rectangles, and the vertical
expansion is $\rho$ for $R_1$, $R_2$ and $R_3$, and $\s$ for the other
rectangles of first generation. 
%To see that it is possible, just consider that
%in the first step of the construction of the horseshoe, the expansion is not
%constant, but the regions where it changes are sent outside $Q$ by $F_0$ or by
%$F_0^{-1}$.

\begin{figure}[htbp]
\begin{center}
\includegraphics[scale=0.5]{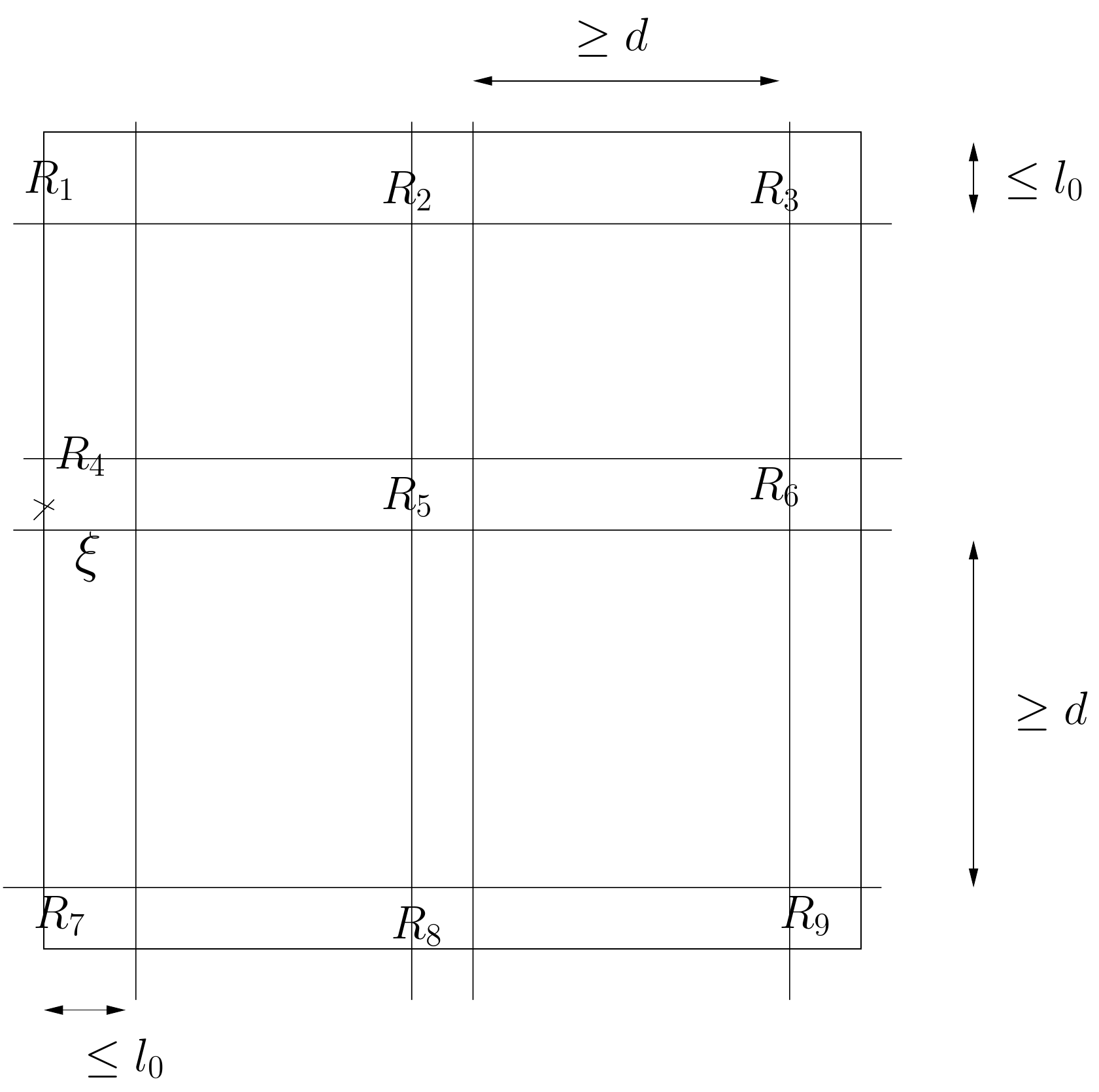}
\includegraphics[scale=0.6]{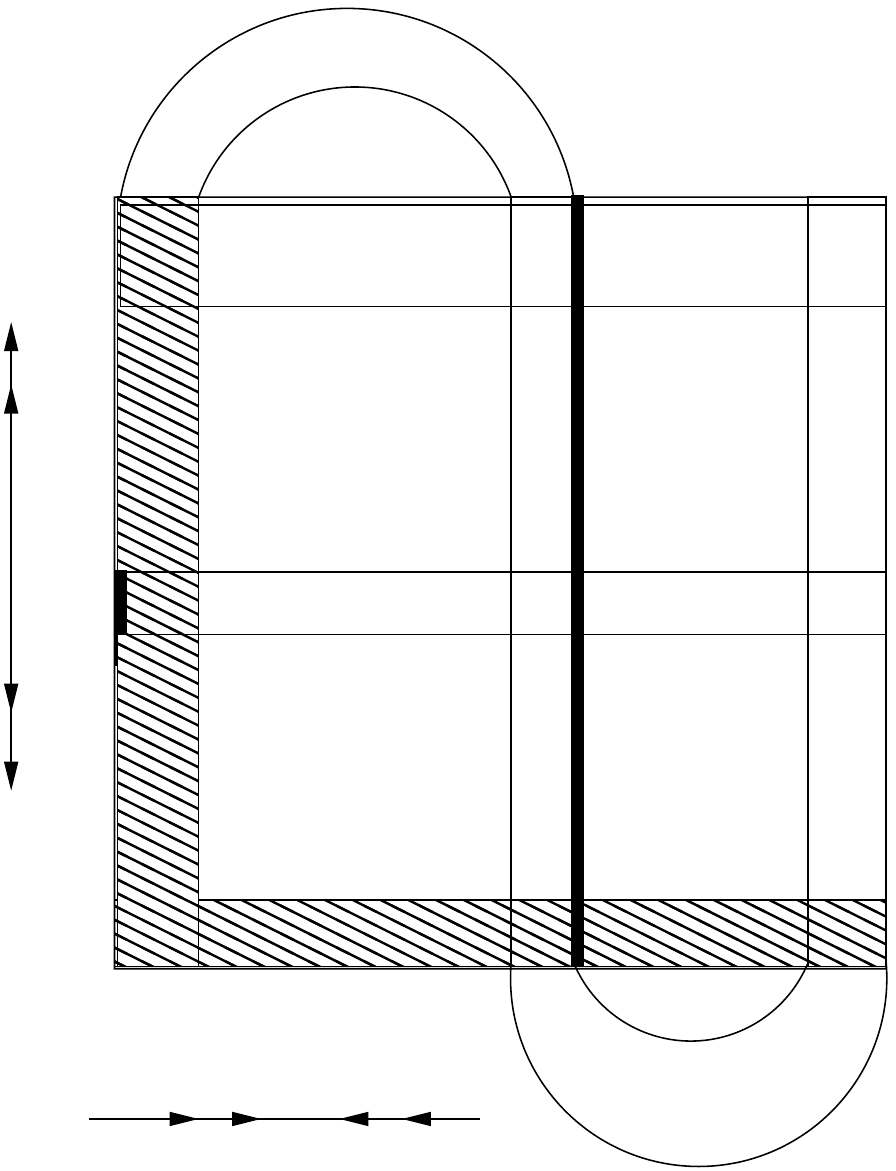}
\caption{The global  initial partition}
\label{fig-partition}
\end{center}
\end{figure}

Now we are going to perturbe $F_0$ in a small region contained
in $R_4$ (to be referred to as the {\it critical region}), to obtain the map $F$.
This small region is sent to $R_8$, and its image will be called {\it
post-critical region}.

 Note that each rectangle $R_i$ is mapped in a vertical stripe that crosses
three rectangles. The transition matrix for this partition is given by 

\begin{equation}\label{trans.matrix}
A=\left( \begin{array}{ccccccccc}0&0&1&0&0&1&0&0&1 \\ 0&0&1&0&0&1&0&0&1 \\ 
0&0&1&0&0&1&0&0&1 \\ 0&1&0&0&1&0&0&1&0 \\ 0&1&0&0&1&0&0&1&0 \\ 0&1&0&0&1&0&0&1&0
\\
 1&0&0&1&0&0&1&0&0 \\ 1&0&0&1&0&0&1&0&0 \\
 1&0&0&1&0&0&1&0&0
\end{array}\right),
\end{equation}
where $A_{i,j}=1$ iff $F_0(R_i)\cap R_j \neq \emptyset$.

This horseshoe-like map is uniformly hyperbolic and conjugated to the subshift
in the space of sequences of nine symbols allowed by the transition matrix. For any $k, n\in\N$, the maximal invariant set in $Q$ is contained in the intersection of the horizontal stripes 
$H_0^k=\{\tau\in Q:F^i_0(\tau)\in Q,\, \forall\,i, 0\leq i\leq k\}$ of generation $k$ with the vertical stripes 
$V_0^n=\{\tau\in Q:F^{-i}_0(\tau)\in Q,\, \forall\,i, 0\leq i\leq n\}$ of generation $n$. Each
connected component of this intersection will be called a {\it rectangle of
generation $(n,k)$}, and the set of all connected components will be denoted by
$G_0^{n,k}$.

Each one of them has horizontal size $\lambda^n$ and the vertical size
between  $\rho^{-k}$ and $\s^{-k}$. More precisely a horizontal stripe $H_{0}^{k}$ has a vertical length
$\s^{-k_{1}}\rho^{-k_{2}}$, where $k_{1}+k_{2}=k$ and are the respective number
of visits of points of the stripe $H_{0}^{k}$ to the less expanding and the most expanding zones (notice that this numbers are the same for all points in $H_{0}^{k}$).

Roughly speaking, point in $H^{k}_{0}$ have the same itinerary for the next forward $k$ iterates of $F_{0}$ and points in $V^{n}_{0}$ have the same itinerary for the next $n$ backward iterates. Points in some rectangle of $G_0^{n,k}$ have the same itinerary from $-n$ to $+k$.

The image by $F_0$ of one rectangle in
$G_0^{n,k}$ intersects exactly three rectangles of the same generation, and
contains (determines) three rectangles of generation $(n+1,k)$.

\medskip

We recall that $d$ is a lower bound for the horizontal and vertical gaps  between the elements of the partition $G_{0}^{1,1}$ ; $l_{0}$ is an upper bound for the horizontal and vertical length of the elements of $G_{0}^{1,1}$. 
We emphasize that $d$ can be chosen as
close as wanted to $\frac12^{-}$ by doing $\s$, $\rho$ and $\frac1\lambda$ 
big enough. Similarly $l_{0}$ can be chosen as small as needed. We shall use this later. 

%Roughly speaking,  a vertical stripe $V_{0}^{n}$ has horizontal length
%$\lambda^{-n}$ and  

%%%%%%%%%%%%%%%%%%%%%%%%%%%%%%%%%%%%%
\subsection{Definition of the map $F$}

In this subsection we define the map $F$, depending on the parameters $\l$,
$\s$, $\rho$ and some others to be introduced. We are going to assume some conditions on the parameters to be stated along the way, always respecting the
conditions stated untill now, including Equation (\ref{equ-lambda-sigma}).

Note that each rectangle $R_i$ is a union of horizontal lines, each one consisting of
points with the same future by $F_0$ (some of them are sent outside $Q$ by some
future iterate).

\subsubsection{The critical region}

We choose a point $\xi '$ belonging to the right hand side of $R_8$. We choose it such
that 
\begin{enumerate}
\item its future itinerary never intersects $R_7\cup R_4$,
\item its future itinerary intersects infinitely many times the set $R_5\cup R_6$,
\item its future itinerary intersects infinitely many times the set $R_1\cup R_2\cup R_3$.
\end{enumerate}
Its preimage $\xi=(0,\xi_2)$  belongs to the
left hand side of $R_4$, and all backward iterates of $\xi$ are in the left hand
side of $R_7$, converging to the fixed point $(0,0)$.

\paragraph{Notation.} We will denote by $\CO(\xi)$ the critical orbit, \ie 
$$\CO(\xi):=\left\{F^{k}(\xi),\ k\in\Z\right\}.$$
We set $\CO^{\pm}(\xi)$ for the forward or backward orbit of $\xi$.

\bigskip
We change the map $F_0$ in a rectangle
$\mathfrak{R}=[0,\alpha_{max}]\times[\xi_2-\beta_{max}, \xi_2+\beta_{max}]$ (see  Figure \ref{fig-pertudyna} ).
This region is referred to as the \emph{critical region}, and $\xi$ is the \emph{critical point}. 
The constants $\alpha_{max}$ and
$\beta_{max}$ must satisfy several conditions, to be stated along the way.

\begin{figure}[htbp]
\begin{center}
\includegraphics[scale=0.45]{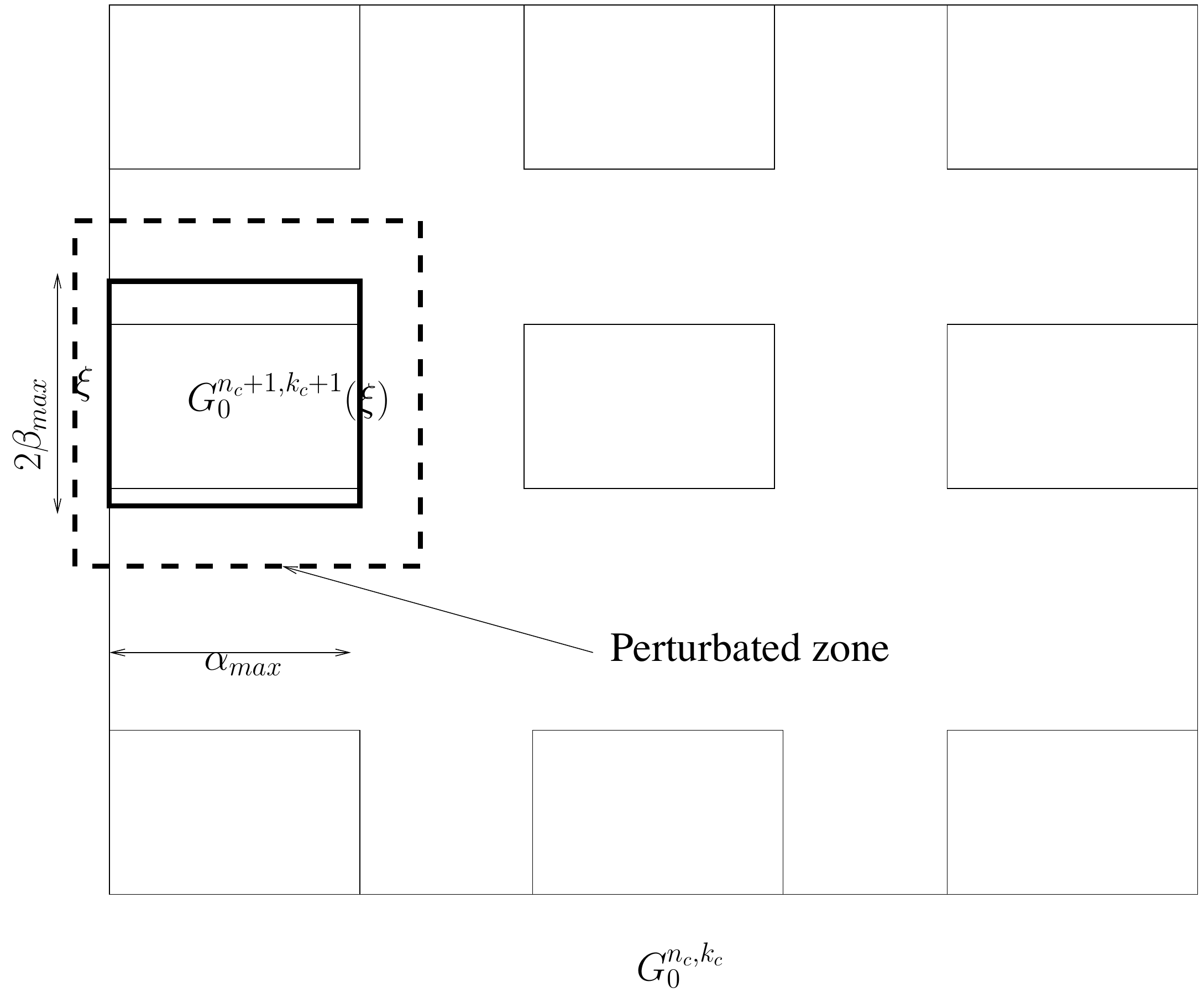}
%\input{mapafinatres.pstex_t}
%\vskip-5cm

%\includegraphics[scale=0.5]{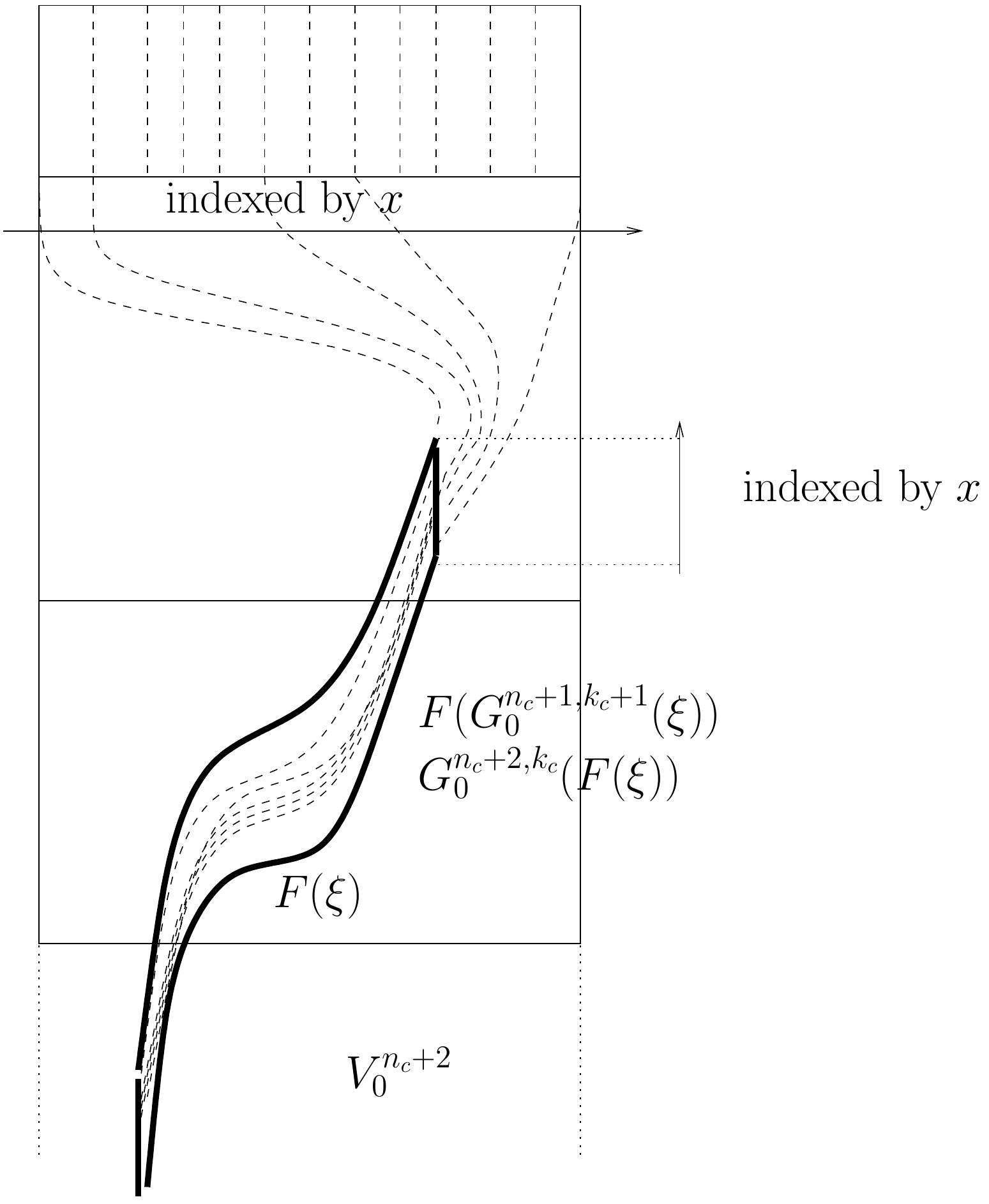}

\caption{Choice of the critical region}
\label{fig-pertudyna}
\end{center}
\end{figure}

\bigskip
Related to these quantities we introduce two integers $n_{c}$ and $k_{c}$. 
We ask $\mr$ to contain exactly one rectangle of $G_0^{n_{c}+1,k_{c}+1}$, 
 namely $G_{0}^{n_c+1,k_c+1}(\xi)$, and to have empty intersection with all the other rectangles of
$G_0^{n_{c}+1,k_{c}+1}$. 

It also has to be contained into a rectangle $G_{0}^{n_c,k_c}$, see Figure \ref{fig-pertudyna}. For this, we choose a value of $k$ such that $F_0^{k+1}(\xi)\in R_5\cup R_6$.
Doing this, we can be sure that $G_{0}^{n_c+1,k_c+1}(\xi)$ is contained in the
interior of the rectangle $G_{0}^{n_c,k_c}(\xi)$, except by the left boundary of
$G_{0}^{n_c+1,k_c+1}(\xi)$, that is contained in the interior of the left boundary
of $G_{0}^{n_c,k_c}(\xi)$. Other conditions on $n_{c}$ and $k_{c}$ will be also stated
later. 

We consider $k_{1}$ and $k_{2}$ such that $k_{1}+1+k_{2}=k_{c}+1$  corresponding to
the number of visits into the less and more expanding zones of
$G_{0}^{n_c+1,k_c+1}$ by the forward $k_c+1$ iterations by $F_0$. By construction, the first expansion is $\s$. The alternation of big ($\rho$) or small ($\s$) expansions are similar for every point in $G_{0}^{n_c,k_c}(\xi)$ for the first $k_c$ iterates of $F_0$.

The condition $\disp G_{0}^{n_c+1,k_c+1}(\xi)\subset \mr\subset G_{0}^{n_c,k_c}(\xi)$ and $\disp G_{0}^{n_c+1,k_c+1}(M)\cap \mr=\emptyset$ if  $G_{0}^{n_c+1,k_c+1}(\xi)\neq G_{0}^{n_c+1,k_c+1}(M)$ can be realized if   
\begin{subeqnarray}\label{condisuff-1}
 l_{0}\s^{-k_{1}}\rho^{-k_{2}}&\le& \beta_{max}\le
d\s^{-k_{1}}\rho^{-k_{2}},\slabel{equ1-betamax-genen+1k+1}\\
 \alpha_{\max}&=& l_{0}\lambda^{n_{c}}\slabel{equ1-almax-genen+1k+1}.
\end{subeqnarray}
 These two conditions are referred to as
conditions \eqref{condisuff-1}.

\bigskip
The image of $G_{0}^{n_c+1,k_c+1}(\xi)$ by $F_{0}$ is a rectangle of
$G_{0}^{n_c+2,k_c}$. The image of $G_{0}^{n_{c}+1,k_{c}}(\xi)$ is the union of three rectangles of $G_{0}^{n_c+2,k_c}$: the one which contains $F_0(\xi)$ and the ones immediately above and  below.

\subsubsection{Dynamics in the critical region}

The map $F$ is equal to $F_{0}$ outside a neighborhood of $\mr$. In $\mr$ it is defined  as
follows (see Figure \ref{fig-pertudyna-2}). 
The  vertical lines in $\mr$ are sent over cubic curves and the horizontal lines
are sent over verticals. This is done in such a way that $F(\xi)$ is
in the same horizontal line than $F _{0}(\xi)=\xi'$. As $\xi'$ never comes back to $R_{4}$ by iteration of $F_{0}$, for every $n\ge 0$, $F^{n}(F(\xi))=F_{0}^{n}(F(\xi))$ and then 
$F^{n+1}(\xi)$ and $F^{n}_{0}(\xi ')$ always belong to the same $R_{j}$. We ask for $F(\xi)$ to be centered with respect
to the vertical stripe $V_{0}^{n_{c}+2}(\xi ')$, in the horizontal direction (again, see Figure \ref{fig-pertudyna-2}).  For 
$\xi+(x,y)\in \mr$,
we set 

$$F(\xi+(x,y))=F(\xi)+(-\eps_{1} y,bx-cy(y^2+x)).$$
 The constants $c>1$ and $\eps_{1}<1$  and $b$ are positive parameters of the
map. For simplicity we set $\disp \CF(x,y)=(-\eps_{1} y,bx-cy(y^2+x))$. In other words,
we have
 $$F(\xi+(x,y))=F(\xi)+\CF(x,y).$$
 
 \begin{figure}[htbp]
\begin{center}
%\includegraphics[scale=0.5]{mapafina4.pdf}
%\input{mapafinatres.pstex_t}
%\vskip-5cm

\includegraphics[scale=0.5]{}

\caption{Perturbation of the dynamics}
\label{fig-pertudyna-2}
\end{center}
\end{figure}

 We introduce a new parameter $A$ which is a positive real number. It will be
used to define the unstable cone field. We assume that the parameters satisfy
the following conditions:
\begin{subeqnarray}\label{condi-suff}
1<&3c\beta_{max}<&\frac1{\eps_{1}},\slabel{equ-cbeta-lambda}\\
b&=&2c\beta_{max},\slabel{condi-b-c-betamax}\\
1&<<&\frac{Ad\eps_{1}}{3c\beta_{max}},\slabel{equ-grandApour expansion}\\
1<<A&=&\frac{c}{8\eps_{1}},\slabel{condi-majo Abetamax}
\end{subeqnarray}
This conditions will be referred to as system of conditions \eqref{condi-suff}.

\bigskip 
To control the future dynamics of $F$ using the dynamics of $F_0$ we ask for that the image by $F$ of $\mr$ stays into the interior of the vertical band $V_0^{n_{c}+2}(F_0(\xi))$ (see Figure \ref{fig-pertudyna-2}). We also ask for that $F(\mr)$ only intersects one element of $G_0^{n_{c}+2,k_{c}}$ and that this intersection occurs in a special way (describe just below).

All the cubic curves we consider are graphs over the horizontal interval of length
$2\eps_{1}\beta_{\max}$ centered at $F(\xi)$. On the left hand side these curves finish in a vertical segment,
which is the image of the horizontal segment
$[0,\alpha_{max}]\times\{\xi_{2}+\beta_{max}\}$. Its points are of the form $ (-\eps_1\beta_{max},bx-c\bm(\bm^2+x))$, where $x\in[0,\am]$. 
Similarly, on the right hand side there
is the vertical segment image of the horizontal segment
$[0,\alpha_{max}]\times\{\xi_{2}-\beta_{max}\}$. Its points are of the form $ (\eps_1\beta_{max},bx+c\bm(\bm^2+x))$, where $x\in[0,\am]$. 

These two segments are not at the same vertical position. If $\alpha_{max}$ is
chosen very small, equality \eqref{condi-b-c-betamax} shows that the segment from the right has its
bottom higher than the top of the segment from the left. We want
that the horizontal line $H_{0}^{k_{c}}(F_{0}(\xi))$ passes through these two
vertical segments. 

These conditions are realized if the following system holds:
\begin{subeqnarray}\label{condisuff-2}
-c\bm^3>-\frac{2d}3\s^{-k_1+1}\rho^{-k_2}&\hskip-.3cm,&\hskip-.5cm c\bm^3+3c\bm\am<\frac{2d}3\s^{-k_1+1}\rho^{-k_2}
\slabel{equ2-betamax-genen+1k}, \\
2\eps_{1}\beta_{max}&<&
l_{0}\lambda^{n_{c}+1}\slabel{equ3-betamax-genen+1k},\\
2l_{0}\s^{-k_{1}+1}\rho^{-k_{2}}+c\beta_{max}\alpha_{max}-c\bm^3&<&c\beta_{max}^{3}\slabel{equ4-betamax-genen+1k}.
\end{subeqnarray}
These conditions are referred to as conditions \eqref{condisuff-2}.
Condition \eqref{equ4-betamax-genen+1k} is the exact one to ensure that the two
vertical segments are torn apart by the horizontal stripe
$H_{0}^{k_{c}}(F_{0}(\xi))$. Condition \eqref{equ3-betamax-genen+1k} means that the image $F(\mr)$ can be included into the vertical band $V_0^{n_{c}+2}(F_0(\xi))$. Condition \eqref{equ2-betamax-genen+1k} ensures that the image $F(\mr)$ does not intersect other rectangles of $G_0^{n_{c}+2,k_{c}}$. 

Moreover, as the conditions hold with strict inequalities, the perturbation $F$
 of $F_0$ (for the $\CC^0$-topology) can be made as regular as wanted.

\subsubsection{Dynamics outside the critical region}

To finish to describe the map in $R_{4}$ we have to explain how it is defined
inside the gaps between the elements of $G_0^{n_{c}+1,k_{c}+1}$ of $G_0^{n_{c},k_{c}}(\xi)$. 
Note that the perturbation is only done on a small neighborhood of $G_0^{n_{c}+1,k_{c}+1}(\xi)$ which does not intersect other elements of $G_0^{n_{c}+1,k_{c}+1}$. 

This neighborhood is however close to 3 other rectangles of $G_0^{n_{c}+1,k_{c}+1}$, the one at the right hand side of $G_0^{n_{c}+1,k_{c}+1}(\xi)$, the one directly above and the one directly below.

These two last rectangles are laminated by vertical lines, indexed by $x$. Their image by $F=F_0$ are also laminated by vertical lines, still indexed by $x$ (up to a linear scaling). 
In $G_0^{n_{c}+1,k_{c}+1}(\xi)$, the corresponding vertical lines are sent on cubics. They get ``out'' $F(\xi+[0,\am]\times[-\bm,\bm])$ by the two vertical segments describe above; this is still indexed by $x$. 
Then, we simply connect the line of index $x$ to the cubic with the same index, and do it such that the global envelop stays into the vertical band $V_0^{n_{c}+2}$ (see Figure \ref{fig-pertudyna-2}). 
This ensures that points which are not in the horizontal strip $H_0^{k_{c}}(\xi)$  have the same image by $F$ or $F_0$.

\begin{figure}[htbp]
\begin{center}
\includegraphics[scale=0.5]{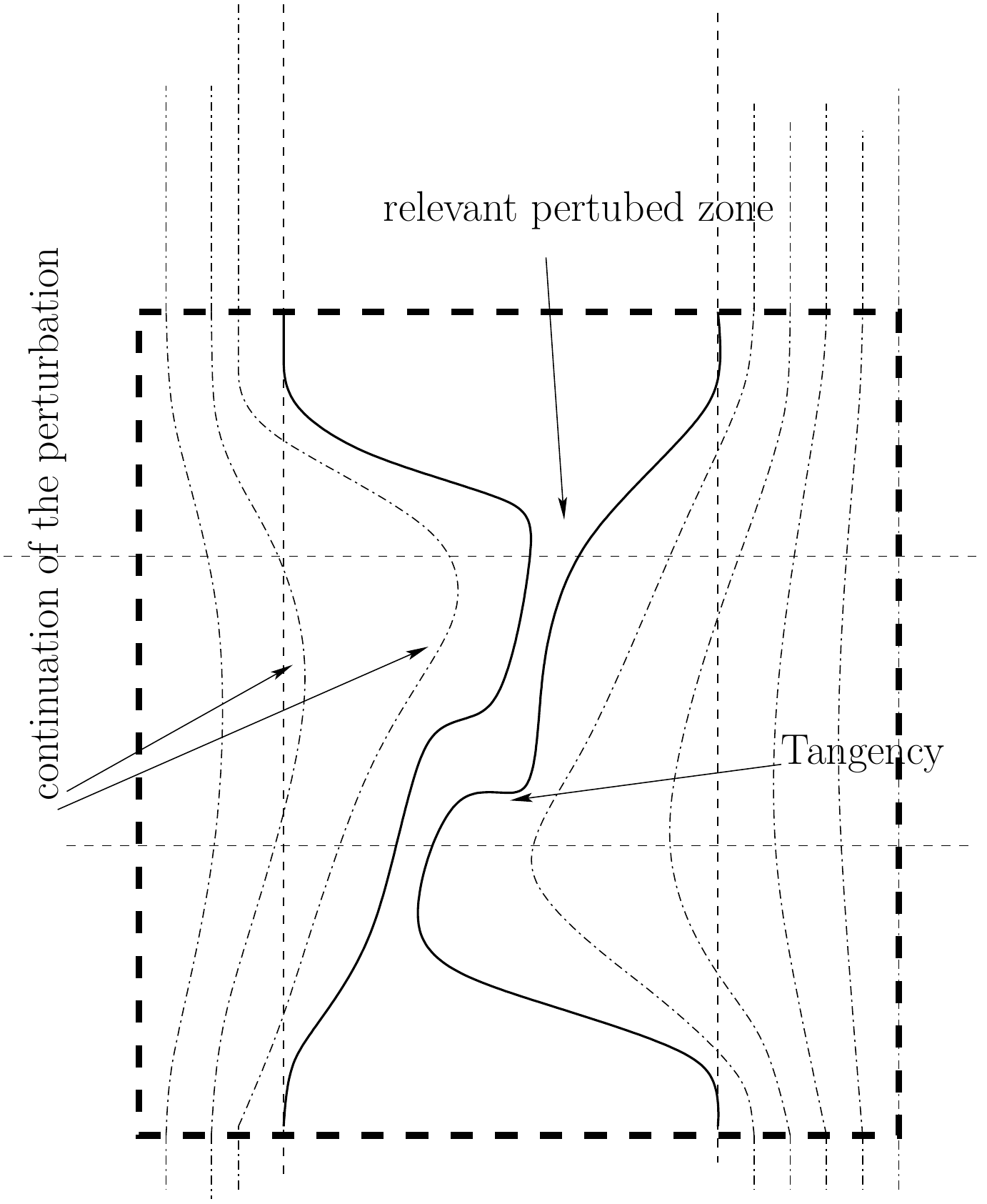}
\caption{Image of the perturbation of the dynamics outside $G_0^{n_{c}+1,k_{c}+1}(\xi)$ and inside the perturbed zone in Fig. \ref{fig-pertudyna}.}
\label{fig-pertu-image}
\end{center}
\end{figure}

We still use this vertical lamination to explain the perturbation of the dynamics on both lateral sides of $G_0^{n_{c}+1,k_{c}+1}(\xi)$. The vertical lines are sent on curves as shown on Figure \ref{fig-pertu-image}.

The main condition we ask for to occur is that points which are between the gaps of rectangles of $G_0^{n_{c}+2,k_{c}}$ do not belong to the image $F(\CQ)$.

%%%%%%%%%%%%%%%%%%%%%%%%%%%%%%
\subsection{Set of parameters} 
We can now prove that the set of parameters satisfying our conditions is large. 
\begin{lemma}
\label{lem-condi-openset}
The four series of conditions \eqref{equ-lambda-sigma},\eqref{condisuff-1},
\eqref{condi-suff} and \eqref{condisuff-2} are available for an open set of
parameters. This set of parameters allows $c$, $\s$ and $\rho$ to increase,
$\eps_{1}$ and $\lambda$ to decrease. 
\end{lemma}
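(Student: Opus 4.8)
The plan is to exhibit, for suitable choices of the discrete data $(n_c,k_1,k_2)$, a whole open box of continuous parameters $(\lambda,\s,\rho,c,\eps_1,b,\am,\bm,A)$ inside which all four systems hold, and then simply observe that each of the finitely many constraints is an open condition (strict inequality, or an equality that defines $b$, $\am$, $A$ as continuous functions of the remaining free parameters). Since \eqref{condi-b-c-betamax}, \eqref{equ1-almax-genen+1k+1} and the second part of \eqref{condi-majo Abetamax} are \emph{definitions} of $b$, $\am$, $A$, we may eliminate those three variables and are left with a system of strict inequalities in the genuinely free parameters; openness is then immediate provided we produce one solution.

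\medskip
First I would fix the order of magnitude of the dynamical scales. Recall from the end of Section~\ref{sec-def-F} that $d$ can be pushed arbitrarily close to $\frac12$ and $l_0$ arbitrarily close to $0$ by enlarging $\s$, $\rho$, $1/\lambda$; so treat $d/l_0$ as a large quantity $L$ at our disposal. Set $\mu:=\s^{-k_1}\rho^{-k_2}$ for brevity. Condition \eqref{equ1-betamax-genen+1k+1} asks $\bm\in[l_0\mu,\,d\mu]$, an interval of positive length precisely because $l_0<d$. Choose $\bm$ in the interior, say $\bm=\sqrt{l_0 d}\,\mu$. Then \eqref{equ-cbeta-lambda} reads $1<3c\bm<1/\eps_1$: pick $c$ with $3c\bm$ slightly above $1$, which forces $c\approx 1/(3\bm)$ to be large since $\bm$ is tiny — consistent with $c>1$ — and then choose $\eps_1$ small enough that $3c\bm<1/\eps_1$, with room to spare. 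This fixes $b=2c\bm$ and $\am=l_0\lambda^{n_c}$.

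\medskip
Next I would check the remaining inequalities, in two groups. The cone-field conditions \eqref{equ-grandApour expansion}, together with $A=c/(8\eps_1)$ from \eqref{condi-majo Abetamax}, demand $Ad\eps_1\gg 3c\bm$, i.e. $cd/8\gg 3c\bm$, i.e. $d\gg 24\,\bm$: true since $\bm=O(\mu)=O(l_0)$ and $d\to\frac12$, provided $l_0$ (hence $\mu$) is small enough — and the gap can be made as large as we like, which is what ``$1<<$'' means. The geometric conditions \eqref{equ2-betamax-genen+1k}--\eqref{equ4-betamax-genen+1k} are checked by comparing powers of the small quantities: $c\bm^3\approx \bm^2/3$, $c\bm\am\approx \am/3$, and one verifies $\bm^2\ll \s^{-k_1+1}\rho^{-k_2}=\s\mu$ (true since $\bm^2=l_0 d\,\mu^2\ll \mu\le\s\mu$ for $l_0$ small), that $2\eps_1\bm<l_0\lambda^{n_c+1}$ can be arranged by taking $\eps_1$ small relative to $\lambda^{n_c+1}/(l_0 d\,\mu\cdot 2)$ — note $\mu$ itself is a product of powers of $\lambda^{-1}$-unrelated quantities, so there is no hidden circularity once the $k_i$ are fixed first — and finally \eqref{equ4-betamax-genen+1k}, rewritten as $2l_0\s\mu + c\bm\am < c\bm^3(1 + o(1))$, i.e.\ essentially $2l_0\s\mu + \am/3 \lesssim \bm^2/3$; since $\am=l_0\lambda^{n_c}$ can be made far smaller than $\bm^2=l_0 d\mu^2$ by taking $n_c$ large, and the term $2l_0\s\mu$ is $O(l_0)$ times $\s\mu$ versus $\bm^2=O(l_0)\mu^2$ — here one must be careful, so this is where I would spend real effort.

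\medskip
That last point is the main obstacle: inequality \eqref{equ4-betamax-genen+1k} pits $l_0\s\mu$ against $\bm^2\approx l_0 d\,\mu^2$, and for this we need $\s\mu\lesssim d\mu^2$, i.e.\ $\s\lesssim d\mu=d\s^{-k_1}\rho^{-k_2}$, which \emph{fails} unless $k_1,k_2$ are large enough that $\s^{k_1+1}\rho^{k_2}$ is dwarfed by $d$ — impossible as stated, so the correct reading must be that $\mu$ is \emph{not} forced small by $l_0$ alone and that the freedom lies in choosing the visit counts $k_1,k_2$ (equivalently $k_c$, and with it $n_c$) appropriately before fixing the continuous parameters. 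Concretely, I would first select $k_1,k_2,n_c$ so that $l_0\lambda^{n_c}$, $\s^{-k_1}\rho^{-k_2}$ and their relevant products sit in the required ratios, then choose $\bm,c,\eps_1$ as above, and only then shrink $l_0$. Once a single consistent assignment is displayed, the Lemma follows because replacing each ``$<<$'' by a concrete large ratio and each ``$<$'' by our strict inequality leaves a neighborhood in parameter space where everything still holds; the asserted monotone directions (increase $c,\s,\rho$; decrease $\eps_1,\lambda$) are then read off: enlarging $\s,\rho$ only shrinks the dynamical scales and helps \eqref{equ-lambda-sigma}, \eqref{equ2-betamax-genen+1k}, \eqref{equ4-betamax-genen+1k}; enlarging $c$ and shrinking $\eps_1$ enlarges $A=c/(8\eps_1)$ and the ratio in \eqref{equ-grandApour expansion}; shrinking $\lambda$ shrinks $\am$ and only tightens nothing essential, after a compensating re-choice within the open set.
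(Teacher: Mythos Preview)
Your proposal correctly identifies the overall architecture --- treat \eqref{condi-b-c-betamax}, \eqref{equ1-almax-genen+1k+1}, \eqref{condi-majo Abetamax} as definitions, reduce to strict inequalities in the remaining free parameters, exhibit one solution, and invoke openness --- and this is exactly what the paper does. But there is a genuine gap at precisely the point you flag as ``where I would spend real effort'': condition \eqref{equ4-betamax-genen+1k}.

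The problem is your choice of scale for $c$. You pick $3c\bm$ ``slightly above $1$'', so $c\bm\approx\tfrac13$ and $c\bm^3\approx\tfrac13\bm^2$. Then \eqref{equ4-betamax-genen+1k} demands roughly $l_0\s\mu\lesssim\bm^2$; since $\bm\le d\mu$ this forces $l_0\s\lesssim d^2\mu$. But $\mu=\s^{-k_1}\rho^{-k_2}$ is small while $l_0\s\approx\lambda\s$ is bounded below by $\s^{-0.2}$ thanks to \eqref{equ-lambda-sigma}, so this inequality is \emph{unattainable} for any choice of $k_1,k_2,n_c$. Your suspicion that the discrete data rescue the situation is a red herring.

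The paper's key move is a different scaling: it sets $c\bm^2=1$, so that $c\bm=1/\bm$ is \emph{huge}, not of order one. This is still compatible with \eqref{equ-cbeta-lambda} provided $\eps_1$ is taken correspondingly tiny ($\eps_1<\bm/3$). Now $c\bm^3=\bm$, and since the paper places $\bm$ at the scale $\tfrac{d}{2}\s^{-k_1+1}\rho^{-k_2}$, condition \eqref{equ4-betamax-genen+1k} becomes $2l_0\s^{-k_1}\rho^{-k_2}\lesssim 2\bm$, i.e.\ $2l_0<d\s$, which holds once $\s$ is large. The same normalization makes \eqref{equ-grandApour expansion} read $Ad\eps_1/(3c\bm)=d/(24\bm)\gg1$ automatically, and the paper then links $\am$ to $\bm^2$ (choosing $n_c$ so that $\am\in(\lambda\bm^2/10,\bm^2/10]$) so that the $c\bm\am$ term is harmless. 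Once you adopt $c\bm^2=1$ your outline goes through essentially as written; without it, it does not.
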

\begin{proof}
We set 
\begin{equation}
\label{equ-def-betamax-gene}
\beta_{max}=\frac{d}2\s^{-k_{1}+1}\rho^{-k_{2}}.
\end{equation}
We emphasize that this equality means that $k_{c}$ is fixed.

We assume that $\s$, $\rho$ and $\frac1\lambda$ are very big. Remember that
$l_{0}\approx\lambda$ and $d\approx \frac12$. This means that
\eqref{equ1-betamax-genen+1k+1} holds (and one can increase $\s$, $\rho$ and
$\frac1\lambda$).
Moreover, our condition on the existence of infinitely many forward iterates  for $F_0$ such that $F_0^{k_{c}}(\xi)$ belongs to $R_5$ or $R_6$ shows that $k_{c}$ can be taken as big as wanted. Hence, $\bm$ is as small as wanted. Note that condition \eqref{condi-b-c-betamax} fixes
the value of $b$ as soon as $c\beta_{max}$ is chosen.

Due to  \eqref{equ1-almax-genen+1k+1}, there exists a unique integer $n_{c}$ such that 
\begin{equation}
\label{equ-def-alphamax-fonc-betamax}
\frac{\lambda.\be_{max}^{2}}{10}<\al_{max}\le \frac{\be_{max}^{2}}{10}.
\end{equation}
This fixes $\al_{max}$ as soon as $\be_{max}$ is fixed. We assume that $\bm$ is chosen sufficiently small such that $n_c$ is bigger than 5.

Our choice for $\am$ yields that if the following system of inequalities  holds,

\begin{subeqnarray}\label{condisuff-2.2}
c\bm^3&<&\frac{2d}3\s^{-k_1+1}\rho^{-k_2}
\slabel{equ2-2-betamax-genen+1k}, \\
2l_{0}\s^{-k_{1}}\rho^{-k_{2}}&<&2c\beta_{max}^{3}\slabel{equ4-2-betamax-genen+1k},
\end{subeqnarray}
then inequalities \eqref{equ2-betamax-genen+1k} and \eqref{equ4-betamax-genen+1k} also hold.

%We claim that for very small $\am$, if inequalities \eqref{equ2-2-betamax-genen+1k} and \eqref{equ4-2-betamax-genen+1k} hold, then inequalities \eqref{equ2-betamax-genen+1k} and \eqref{equ4-betamax-genen+1k} also hold. 
%

Now, we set 
\begin{equation}
\label{equ-def-cbeta2}
1=c\beta_{max}^{2}.
\end{equation}
The  definition of $\be_{max}$ in \eqref{equ-def-betamax-gene} and  $d\approx \frac12$ and $l_0\s\approx 1$ show that  \eqref{equ4-2-betamax-genen+1k} holds (for $\s$ sufficiently big).

As $\beta_{max}$ is very small, $c\beta_{max}>1$ and part of
\eqref{equ-cbeta-lambda} holds. 
Note that $\disp \frac{Ad\eps_1}{3c\bm}=\frac{d}{24\bm}$ and is as big as wanted if $\bm$ decreases.

Now, to satisfy \eqref{equ3-betamax-genen+1k}, \eqref{equ-cbeta-lambda} and \eqref{condi-majo Abetamax}, it is sufficient that the following holds (remember \eqref{equ1-almax-genen+1k+1}):
\begin{eqnarray*}
 2\eps_1\bm&<&\lambda\am,\\
15\eps_1&<&\bm,\\
\eps_1\bm^2&<&\frac18.
\end{eqnarray*}
All these conditions hold if, for a given $\bm$, $\eps_1$ is chosen sufficiently small.

 To conclude the proof, note that $c$ can increase as wanted (if $\beta_{max}$
decreases), $\s$ can  also increase (then $\beta_{max}$  and $\lambda$ decrease
and $\rho$ increases), $\eps_{1}$ can decrease. Integers $k_{c}$ and $n_{c}$ can be chosen as big as wanted, up to the restriction on $k_{c}$ saying that $F_0^{k_{c}+1}(\xi)\in R_5\cup R_6$. 

\end{proof}

\begin{remark}
\label{rem-choice parameters}
Later we will ask for more conditions on the parameters. In particular we shall
ask that $A$ is big enough, with respect to other parameters. As we pointed out, this can always be done by increasing the value of $c$, letting
$c\beta_{max}^2$ constant or decreasing $\eps_1$
$\blacksquare$\end{remark}

%%%%%%%%%%%%%%%%%%%%%%%%%%%%%%%%%%%%%%
%%%%%%%%%%%%%%%%%%%%%%%%%%%%%%%%%%%%%%%%
\section{Cone fields, expansion and contraction}\label{sec-hyper-F}

In this section we construct the unstable and the stable conefields and prove expansion and contraction in the hyperbolic splitting. The construction is stated for  points in $\CQ\cap F(\CQ)\cap F^{-1}(\CQ)$ and outside the critical orbit. The unstable conefield is invariant by $DF$, in the sense that the image of the cone at $M$ by $DF(M)$ is included in the cone at $F(M)$. Analogously, the stable conefield is invariant by $DF^{-1}$. The properties of expansion and contraction of vectors inside the unstable and stable cones will be proved only along pieces of orbits that stay inside $\CQ$ long enough. As a consequence, there is an invariant splitting into stable and unstable subspaces of the tangent space at points of $\Lambda$ out of the critical orbit.

%%%%%%%%%%%%
\subsection{Dynamical partition for $F$}

Now consider the sets
 
$$H^{k}=\{\tau\in \CQ:F^i(\tau)\in \CQ,\, \forall\,i, 0\leq i\leq
k\},$$ 
$$V^{n}=\{\tau\in \CQ:F^{-i}(\tau)\in \CQ,\, \forall\,i, 0\leq i\leq n\}$$ 
and 
$G^{n_{c},k_{c}}$ the set of connected components of $H^k\cap V^n$. Notice that they are defined using the perturbed map $F$.

\begin{lemma}\label{lem-comp-bands}
 Let $k_c$ and $n_c$ be as in the choice of the region $\mr$. Then $\CQ\setminus H^{k_{c}}$ contains $\CQ\setminus H_0^{k_{c}}$. Similarly $\CQ\setminus V^{n_{c}+2}$ contains $\CQ\setminus V_0^{n_{c}+2}$.
\end{lemma}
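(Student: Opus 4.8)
The statement is a set-theoretic comparison between the dynamical partitions for the perturbed map $F$ and the unperturbed horseshoe $F_0$. Since $H^{k_c} = \{\tau : F^i(\tau)\in\CQ \text{ for } 0\le i\le k_c\}$ and $H_0^{k_c}$ is the same with $F_0$ in place of $F$, the inclusion $\CQ\setminus H_0^{k_c}\subset \CQ\setminus H^{k_c}$ is equivalent to $H^{k_c}\subset H_0^{k_c}$, i.e. every point whose first $k_c$ forward $F$-iterates stay in $\CQ$ already had its first $k_c$ forward $F_0$-iterates in $\CQ$. I would prove this by contraposition: take $\tau\in\CQ\setminus H_0^{k_c}$, so there is a least $i\le k_c$ with $F_0^i(\tau)\notin\CQ$, and show $F^i(\tau)\notin\CQ$ as well, hence $\tau\notin H^{k_c}$.

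The key point is that $F$ and $F_0$ only differ inside the critical region $\mr$, and on $\mr$ the perturbation was constructed (conditions \eqref{condisuff-2}, in particular \eqref{equ3-betamax-genen+1k} and \eqref{equ2-betamax-genen+1k}) so that $F(\mr)$ stays inside the vertical band $V_0^{n_c+2}(F_0(\xi))$ and meets exactly one rectangle of $G_0^{n_c+2,k_c}$. First I would observe that $\mr\subset R_4\subset\CQ$, and that for $j<k_c$ the orbit segment $F_0(\xi),\dots,F_0^{k_c}(\xi)$ never re-enters $R_4\supset\mr$ (this is exactly the choice of $\xi'$: its forward itinerary avoids $R_7\cup R_4$), so after leaving $\mr$ the $F$-orbit and $F_0$-orbit of a point in $\mr$ coincide for the next $k_c$ steps and in particular visit the same rectangles $R_j$. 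Therefore, for a point $\tau\in\mr$, the forward $F$-itinerary up to time $k_c$ is the $F_0$-itinerary of $F_0(\tau')$ for the appropriate point $\tau'$, and one stays in $\CQ$ iff the other does; and for $\tau\notin\mr$ one simply has $F(\tau)=F_0(\tau)$, and one repeats the argument at $F(\tau)=F_0(\tau)$. A clean way to package this: if $\tau\in\CQ$ and $F^i(\tau)\in\CQ$ for all $i\le k_c$, then either the orbit segment never meets $\mr$, in which case it equals the $F_0$-orbit segment and lies in $\CQ$; or it meets $\mr$ at some first time $i_0\le k_c$, and then $F^{i_0+1}(\tau)\in V_0^{n_c+2}(F_0(\xi))\subset\CQ$ lies in the image band, and from there $F^j = F_0^j$ for the remaining $j\le k_c - i_0 - 1$ iterates (because $F_0^j(F_0(\xi))\notin R_4$), so all these iterates lie in $\CQ$; combining, the full $F_0$-orbit segment of $\tau$ up to $k_c$ stays in $\CQ$, i.e. $\tau\in H_0^{k_c}$.

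For the second assertion, $\CQ\setminus V^{n_c+2}\supset\CQ\setminus V_0^{n_c+2}$, i.e. $V^{n_c+2}\subset V_0^{n_c+2}$, the argument is dual and uses backward iterates. Here $F^{-1}$ and $F_0^{-1}$ differ only on $F(\mr)$, which by construction lies in $V_0^{n_c+2}(F_0(\xi))$ and hits only one rectangle of $G_0^{n_c+2,k_c}$; moreover the backward $F_0$-orbit of $\xi$ lies in the left side of $R_7$ and converges to $(0,0)$, never re-entering the post-critical region in $R_8$. So for a point whose $n_c+2$ backward $F$-iterates stay in $\CQ$, if the backward orbit ever meets $F(\mr)$ then one step later (backward) it is at a point whose backward $F_0$-orbit coincides with its backward $F$-orbit, and we transfer membership in $\CQ$ from one map to the other exactly as before; if it never meets $F(\mr)$ then $F^{-i}=F_0^{-i}$ throughout. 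In both cases the point lies in $V_0^{n_c+2}$.

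The main obstacle, and the only place real care is needed, is the bookkeeping around the single step that passes through the critical region: one must check that the number $n_c+2$ (rather than $n_c$ or $n_c+1$) is exactly what makes the "image band" statement $F(\mr)\subset V_0^{n_c+2}(F_0(\xi))$ available, and that the hypothesis "$F_0^{k_c+1}(\xi)\in R_5\cup R_6$" together with conditions \eqref{condisuff-1}–\eqref{condisuff-2} indeed forces $\mr\subset G_0^{n_c,k_c}(\xi)$ and $F(\mr)$ to meet only one rectangle of $G_0^{n_c+2,k_c}$; all of this is already set up in Section \ref{sec-def-F}, so the proof is essentially a careful invocation of those constructions plus the contrapositive reformulation above. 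I expect the actual write-up to be short.
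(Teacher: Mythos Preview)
Your initial contrapositive plan --- take $\tau\in\CQ\setminus H_0^{k_c}$, find the first $j\le k_c$ with $F_0^j(\tau)\notin\CQ$, and show $F^j(\tau)=F_0^j(\tau)$ --- is correct and is exactly the paper's argument. The paper's point is that the perturbation region sits inside the single horizontal stripe $H_0^{k_c}(\xi)$, so $F=F_0$ outside $H_0^{k_c}(\xi)$; a point $\tau\notin H_0^{k_c}$ has its $F_0$-orbit never entering $H_0^{k_c}(\xi)$ before leaving $\CQ$ (entering would force $k_c$ further iterates in $\CQ$), and therefore by a one-line induction $F^i(\tau)=F_0^i(\tau)$ for every $i$ up to the exit time. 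The backward statement is handled dually with $V_0^{n_c+2}(F_0(\xi))$ in place of $H_0^{k_c}(\xi)$.

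Your ``clean packaging'' via the direct inclusion $H^{k_c}\subset H_0^{k_c}$, however, has a genuine bookkeeping gap. Once the $F$-orbit enters $\mr$ at the first time $i_0$ you follow $F^{i_0+1}(\tau)\in F(\mr)$ and argue that from this point on the maps $F$ and $F_0$ agree for the remaining $k_c-i_0-1$ steps. That yields $F_0^{\,j}\big(F^{i_0+1}(\tau)\big)\in\CQ$, which is \emph{not} the same as $F_0^{\,i_0+1+j}(\tau)\in\CQ$: the two expressions differ precisely at the step through $\mr$, where $F^{i_0+1}(\tau)\ne F_0^{\,i_0+1}(\tau)$. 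So the sentence ``combining, the full $F_0$-orbit segment of $\tau$ up to $k_c$ stays in $\CQ$'' does not follow from what you wrote. The repair is immediate and much simpler than tracking the image band: since $F_0^{\,i_0}(\tau)=F^{i_0}(\tau)\in\mr\subset G_0^{n_c,k_c}(\xi)\subset H_0^{k_c}$, the very definition of $H_0^{k_c}$ gives $F_0^{\,i_0+j}(\tau)\in\CQ$ for all $0\le j\le k_c$, and you are done. A related minor imprecision: the paper does not claim $F=F_0$ outside $\mr$ but only outside $H_0^{k_c}(\xi)$ (the perturbation is smoothed on a neighborhood of $\mr$), which is why the paper's induction tracks membership in $H_0^{k_c}(\xi)$ rather than in $\mr$.
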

\begin{proof}
 Let us consider a point $M$ in $\CQ\setminus H_0^{k_{c}}$. By definition, there exists $j<k_{c}$ such that $F_0^{j-1}(M)$ belongs to $\CQ$ but $F_0^j(M)\notin \CQ$. 

This first shows that $M$ does not belong to $H_0^{k_{c}}(\xi)$, otherwise we could iterate $F_0$ for at least $k_{c}$-times. In particular $F(M)=F_0(M)$, and $F_0(M)$ belongs to $\CQ\setminus H_0^{k_{c}-1}$. Again, this shows that $F(M)=F_0(M)$ does not belong to  $H_0^{k_{c}}(\xi)$, and $F^2(M)=F_{0}^2(M)$. By induction, we finally get $F^j(M)=F_0^j(M)\notin \CQ$. 

The proof for $\CQ\setminus V^{n_{c}+2}$ is the same, up to the fact that we iterate $F^{-1}$ and we have to consider the vertical band $V_0^{n_{c}+2}(F_0(\xi))$. 

\end{proof}

Now consider the set $\Lambda:=\bigcap_{n\in\Z}F^n([0,1]^2)$.

\begin{corollary}\label{cor-tempo-volta}
A point $M\in \mr\cap \Lambda$ has a first return time in $\mr$ greater than  or equal to $k_c+1$.
\end{corollary}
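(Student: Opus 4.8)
The plan is to track where the orbit of a point $M\in\mr\cap\Lambda$ can go during the first few iterates, using the combinatorial structure of the unperturbed horseshoe together with the very specific itinerary we imposed on $\xi$. The starting point is that $\mr$ is contained in $G_0^{n_c,k_c}(\xi)$ and in fact contains only the single rectangle $G_0^{n_c+1,k_c+1}(\xi)$ of its generation. So any point of $\Lambda$ lying in $\mr$ shares the forward itinerary of $\xi$ for at least $k_c$ iterates of $F_0$; I would use Lemma~\ref{lem-comp-bands} and the fact that $F=F_0$ outside $H_0^{k_c}(\xi)$ (or the explicit control that $F(\mr)\subset V_0^{n_c+2}(F_0(\xi))$ and meets only one rectangle of $G_0^{n_c+2,k_c}$) to transfer this statement from $F_0$ to the perturbed map $F$. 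The upshot of the first step: the forward $F$-orbit of $M$ for the next $k_c$ steps visits exactly the same sequence $R_{j}$'s as the forward $F_0$-orbit of $\xi$.

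The second step is then purely a matter of reading off the itinerary constraints on $\xi$. By construction $\xi'=F_0(\xi)$ lies in $R_8$, and its forward itinerary never enters $R_7\cup R_4$ (property (1) in the choice of $\xi'$). Since $\mr\subset R_4$, a return of the orbit of $M$ to $\mr$ would in particular force a visit to $R_4$; but the first $k_c$ images of $M$ reproduce the itinerary of $\xi$, which after the initial step (landing in $R_8$) never touches $R_4$. Hence $F^i(M)\notin\mr$ for $1\le i\le k_c$, and the first return time is at least $k_c+1$. I should be a little careful about whether the very first iterate can land back in $\mr$: $F(M)$ lies in the same rectangle as $F(\xi)\in R_8$, which is disjoint from $R_4\supset\mr$, so this is fine; and I should note that ``return to $\mr$'' is a stronger requirement than ``visit $R_4$'', so the bound I get is if anything conservative, which is all that is claimed.

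The one genuine subtlety — and the step I expect to be the main obstacle to write cleanly — is making rigorous the claim that a point of $\Lambda\cap\mr$ really does follow $\xi$'s itinerary under $F$ (not just under $F_0$) for $k_c$ steps, given that $F\ne F_0$ precisely on $\mr$. The point is that after one application of $F$, the image $F(\mr)$ lands inside $V_0^{n_c+2}(F_0(\xi))$ and meets a single rectangle of $G_0^{n_c+2,k_c}$, on which $F$ and $F_0$ agree for the remaining $k_c-1$ forward iterates (because those iterates never re-enter the critical region, again by property (1) for $\xi'$). So after the first step the dynamics is genuinely unperturbed along this orbit, and the itinerary is literally that of $\xi'$ under $F_0$. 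Once this transfer is in place the rest is immediate. I would therefore organize the proof as: (i) one step of $F$ sends $M\in\mr$ into the rectangle containing $F(\xi)$, with $F(M)$ and $F(\xi)$ in the same element of $G_0^{n_c+2,k_c}$; (ii) from there $F$ coincides with $F_0$ for $k_c-1$ further iterates, so $F^{1+j}(M)$ and $F_0^{j}(\xi')$ lie in the same $R_{\bullet}$ for $0\le j\le k_c-1$; (iii) since $\{R_{\bullet}\}$ along the $F_0$-orbit of $\xi'$ avoids $R_4$, and $\mr\subset R_4$, we get $F^i(M)\notin\mr$ for $1\le i\le k_c$, whence the first return time is $\ge k_c+1$.
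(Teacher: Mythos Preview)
Your proposal is correct and follows essentially the same route as the paper's proof: the paper first observes that if $F(M)$ lay outside $G_0^{n_c+2,k_c}(F(\xi))$ then $F(M)\in\CQ\setminus H_0^{k_c}$, hence by Lemma~\ref{lem-comp-bands} $M\notin\Lambda$; it then argues that once $F(M)$ sits in that rectangle, $F_0^j(F(M))$ shares the itinerary of $F_0^j(\xi')$ for $j\le k_c-1$, which never visits $R_4$, so $F$ and $F_0$ agree along that stretch and the first return to $\mr\subset R_4$ is at least $k_c+1$. Your three-step organization (i)--(iii) is exactly this argument, and you correctly flag that the only nontrivial point is the passage from $F_0$ to $F$ after the first iterate.
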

\begin{proof}
Pick a point $M$ in $\mr$. If its image by $F$ is outside $G_0^{n_c+2,k_c}(F(\xi))$, then, it belongs to  $\CQ\setminus H_0^{k_c}$ and  by Lemma \ref{lem-comp-bands} it does not belong to $\Lambda$.

If it belongs to $G_0^{n_c+2,k_c}(F(\xi))$, then $F_0^j(F(M))$ belongs to the same $R_i$ than $F_0^j(\xi')$ for $j\le k_c-1$. Therefore $F_0^j(F(M))$ does not come back to $R_4$ for $j\le k_c-1$ and $R_4$ is the unique place where $F$ and $F_0$ are different. Thus $F^{j+1}(M)$ coincides with $F_0^j(F(M))$ for $j\le k_c-1$, and the first return time in $R_4$ is at least $k_c+1$. 
\end{proof}

\begin{remark}
 \label{rem-rectangle=gg0}
An immediate corollary of Lemma \ref{lem-comp-bands} is that rectangles of $G^{n_c+2,k_c}$ are inside rectangles of $G_0^{n_c+2,k_c}$. 
$\blacksquare$\end{remark}

%%%%%%%%%%%%%%%%%%%%%%%%%
\subsection{Unstable conefield at the post-critical region}

Consider $x$ and $y$ the local coordinates close to $\xi$. We define the unstable conefield close to $F(\xi)$ by 
assigning for $F(\xi+(x,y))$ the cone
 
$$\mathsf{C}^{u}(F(\xi)+\CF(x,y))=\{\mathbf{v }=(v_{x},v_{y}),\ |v_{y}|\ge
A(3y^2+x)|v_{x}|\},$$
where $A$ is the parameter introduced above. This cone is centered on the vertical direction, and contains the tangent direction at $F(\xi+(x,y))$ to the cubic curve $F(\{(x,t), t\in [0,1]\}$. It is not defined for $F(\xi)$.

\begin{lemma}
\label{lem-cone-instable}
Assume that the parameters satisfy conditions \eqref{condi-suff}. Let $\xi+(x,y)$ be
in the critical zone and returning for the first time in it in
$\xi+(\alpha,\beta)=:F^{k_{0}+1}(\xi+(x,y))$; then,
$DF^{k_{0}+1}(\mathsf{C}^{u}(F(\xi+(x,y))))\subset \mathsf{C}^{u}(F(\xi+(\alpha,\beta)))$.
\end{lemma}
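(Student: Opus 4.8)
The plan is to track the unstable cone through one excursion of an orbit that leaves the critical region, travels through the uniformly hyperbolic part of the horseshoe, and returns. The key structural observation, already established in Corollary~\ref{cor-tempo-volta}, is that for a point $M=\xi+(x,y)$ in the critical zone that belongs to $\Lambda$ (or at least iterates inside $\CQ$), the first return time $k_0+1$ to $\mr$ is at least $k_c+1$; moreover for $1\le j\le k_0$ the iterate $F^j(M)$ lies outside $R_4$, so $F$ and $F_0$ agree along the orbit between the post-critical point and the return. Hence the evolution of the cone splits into three regimes: (i) the initial step $DF$ applied at $F(\xi+(x,y))$ in the post-critical region, which is the nonlinear (cubic) part; (ii) a long stretch of $k_0-1$ (or so) iterations under the {\it linear} horseshoe map $DF_0$, which acts diagonally in the $(v_x,v_y)$ coordinates, contracting horizontal components by $\lambda$ and expanding vertical components by $\s$ or $\rho$; and (iii) the final step of re-entry into $\mr$, where we must check the image cone lands inside the newly defined cone $\mathsf{C}^u(F(\xi+(\alpha,\beta)))$, which again involves the cubic geometry of the critical region.

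First I would compute $DF$ at the post-critical point. From $\CF(x,y)=(-\eps_1 y, bx - cy(y^2+x))$ one gets
\[
D\CF(x,y)=\begin{pmatrix} 0 & -\eps_1 \\ b - cy & -c(3y^2+x) \end{pmatrix},
\]
so a vector $\mathbf v=(v_x,v_y)$ in $\mathsf{C}^u(F(\xi+(x,y)))$, i.e.\ with $|v_y|\ge A(3y^2+x)|v_x|$, is sent to a vector whose horizontal component is $-\eps_1 v_y$ and whose vertical component is $(b-cy)v_x - c(3y^2+x)v_y$. I would bound the ratio of vertical to horizontal of the image: the horizontal component has size $\eps_1|v_y|$, and using $|v_x|\le |v_y|/(A(3y^2+x))$ together with conditions \eqref{condi-suff} — in particular $b=2c\bm$, $3c\bm<1/\eps_1$ and $A=c/(8\eps_1)$ — one shows the image vector has vertical-to-horizontal ratio bounded below by a large constant (of order $1/\eps_1$ up to the small factors $3y^2+x\le 3\bm^2+\am$). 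The point is that after this single step the cone has become very narrow around the vertical, quantitatively narrower than what is needed to survive the subsequent linear expansion. Here I would use that $3y^2+x$ is small (controlled by $\bm,\am$) and that $Ad\eps_1/(3c\bm)\gg 1$ from \eqref{equ-grandApour expansion}, which is precisely the reservoir of extra narrowness.

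Next I would propagate this narrow, essentially-vertical cone through the $k_0-1$ linear iterations. Since $DF_0$ is diagonal with entries $\lambda$ (horizontal) and $\s^{-1}$ or $\rho^{-1}$ inverse of expansion — more precisely it expands vertically by $\s$ or $\rho\ge\s>3$ and contracts horizontally by $\lambda<1/3$ — the vertical-to-horizontal ratio of any vector is multiplied by at least $\s/\lambda$ at each step, hence grows. So the cone only gets narrower along this stretch; the ratio after $k_0$ steps is at least $(\s/\lambda)^{k_0-1}$ times the ratio just after the post-critical step. Finally, for the re-entry step I would express the return point as $\xi+(\alpha,\beta)=F^{k_0+1}(\xi+(x,y))$ and compare the slope of the incoming cone with the opening $A(3\beta^2+\alpha)|v_x|$ required by $\mathsf{C}^u(F(\xi+(\alpha,\beta)))$: since $3\beta^2+\alpha\le 3\bm^2+\am$ is bounded and the incoming ratio is enormous (a large constant times a large power of $\s/\lambda$), the inclusion holds with room to spare. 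I would also need to observe that $F(\xi+(\alpha,\beta))$ is in the post-critical region so its cone is the one defined in this subsection, i.e.\ the return actually produces a new post-critical vector.

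The main obstacle I anticipate is the bookkeeping of the first step: one must show the single application of $D\CF$ at the post-critical point, where the map is genuinely nonlinear and the cone is defined with the $y$-dependent width $A(3y^2+x)$, produces enough narrowing to absorb two bad effects at once — the possibility that $k_0$ is as small as $k_c$ and no more, and the fact that on re-entry the new cone is again ``wide'' (width proportional to $3\beta^2+\alpha$, not arbitrarily small). This is exactly why the conditions \eqref{condi-suff} are calibrated the way they are ($b=2c\bm$ making the $b-cy$ term comparable to $c\bm$, and $A=c/(8\eps_1)$ balancing the $\eps_1 v_y$ horizontal output against the $c(3y^2+x)v_y$ vertical output), and a careful inspection of these inequalities — rather than any deep idea — is what the proof comes down to. A secondary, more routine point is justifying that $F=F_0$ along the intermediate orbit, which is immediate from Corollary~\ref{cor-tempo-volta} and the fact that $R_4$ is the only place the two maps differ.
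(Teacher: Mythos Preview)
Your decomposition has the nonlinear step in the wrong place. The cone $\mathsf{C}^u$ is defined at the \emph{post}-critical point $M=F(\xi+(x,y))\in R_8$, and from there the next $k_0$ iterates stay outside $R_4$, so $DF(F^j(M))$ is the \emph{linear} diagonal matrix for $0\le j\le k_0-1$. The nonlinear derivative $D\CF$ appears only at the very last step, at the return point $F^{k_0}(M)=\xi+(\alpha,\beta)$, where it equals $\left(\begin{smallmatrix}0&-\eps_1\\ b-c\beta&-c(3\beta^2+\alpha)\end{smallmatrix}\right)$. What you wrote as step (i)---applying $D\CF(x,y)$ to a vector in $\mathsf{C}^u(F(\xi+(x,y)))$---is a category error: that matrix is $DF$ at $\xi+(x,y)$, not at $M$.

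More seriously, even with the steps in the right order your estimate has a real gap. You argue that after $k_0$ linear steps the slope is ``enormous'' and hence survives the re-entry. But the starting slope on the boundary of $\mathsf{C}^u(M)$ is $A(3y^2+x)$, and $3y^2+x$ is \emph{not} bounded below: points of $\Lambda\cap\mr$ accumulate on $\xi$, so $3y^2+x$ can be arbitrarily small and the cone at $M$ arbitrarily wide. A fixed lower bound $k_0\ge k_c$ cannot compensate for this. The paper's proof closes this gap with two dynamical inequalities that tie $k_0$ to the geometry of the orbit: since $F^{k_0}(M)\in R_4$ while the forward orbit of $\xi$ never visits $R_4$, one gets $\s^{k_0^{(1)}}\rho^{k_0^{(2)}}|bx-cy(y^2+x)|>d$, hence $\s^{k_0^{(1)}}\rho^{k_0^{(2)}}(3y^2+x)>d/(3c\bm)$; and since the backward orbit of $\xi$ stays on the left boundary of $\CQ$ while $M\in R_8$, one gets $\lambda^{k_0}<\alpha/d$. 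These two facts are what make the first term in
\[
\left|\frac{\lambda^{k_0}(b-c\beta)}{\s^{k_0^{(1)}}\rho^{k_0^{(2)}}Au(3y^2+x)\eps_1}-\frac{c}{\eps_1}(3\beta^2+\alpha)\right|\ge A(3\beta^2+\alpha)
\]
negligible compared to the second, uniformly in $(x,y)$ and $(\alpha,\beta)$. Your sketch never invokes either separation argument, and without them the inclusion simply fails for $(x,y)$ close enough to the origin.
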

\begin{proof}
Let us pick $M=F(\xi)+\CF(x,y)$, and assume that the first entrance of $M$ in the critical zone is on the point
$F^{k_{0}}(M):=\xi+(\alpha,\beta)$. We pick a vector in $\mathsf{C}^{u}(M)$, of the form 
$$\mathbf{v}=\left(\begin{array}{c}1 \\Au(3y^2+x)\end{array}\right),$$
with $|u|\ge 1$ (note that any vector of  $\mathsf{C}^{u}(M)$ is proportional to one of this form). 

First we note that $\alpha\neq 0$, otherwise for every $n\ge 1$,
$F^{-n}(\xi+(\alpha,\beta))$ would belong to the rectangle $R_{7}$. This produces
a contradiction with the fact that $F^{-k_{0}}(\xi+(\alpha,\beta))=M$.

We compute the derivatives along the piece of orbit from $M$ to
$F^{k_{0}+1}(M)$.
For $0\le k\le k_{0}-1$, $F^{k}(M)$ belongs to the region where $F$ is linear. Then
\begin{equation}
\label{equ1-cone-instable}
DF^{k_{0}}(M).\mbf{v}:= \left(\begin{array}{c}\lambda^{k_{0}}
\\A\s^{k_{0}^{(1)}}\rho^{k_{0}^{(2)}}u(3y^2+x)\end{array}\right),
\end{equation}
where ${k_{0}^{(1)}}$ and ${k_{0}^{(2)}}$ are respectively the number of visits to the less expanding and more expanding rectangles. The derivative at $\xi+(\alpha,\beta)=F^{k_{0}}(M)$ is
 \begin{equation}
\label{equ2-cone-instable}
\left(\begin{array}{cc}0 &  -\eps_{1}\\b-c\beta &
-c(3\beta^2+\alpha)\end{array}\right).
\end{equation}
We thus get
\begin{eqnarray}
DF^{k_{0}+1}(M).\mbf{v}&=& \left(\begin{array}{cc}
0 & -\eps_{1} \\b-c\beta & -c(3\beta^2+\alpha)
\end{array}\right).
\left(\begin{array}{c}
\lambda^{k_{0}} \\
A\s^{k_{0}^{(1)}}\rho^{k_{0}^{(2)}}u(3y^2+x)
\end{array}\right)\nonumber\\
&=& \left(\begin{array}{c}-\eps_{1}.A \s^{k_{0}^{(1)}}\rho^{k_{0}^{(2)}} u(3y^2+x)
\\\lambda^{k_{0}}(b-c\beta)-c(3\beta^2+\alpha)A\s^{k_{0}^{(1)}}\rho^{k_{0}^{(2)}}u(3y^2+x)\end{array}
\right).
\label{equ3-cone-instable}
\end{eqnarray}
Our goal is to prove that the right hand side member of
\eqref{equ3-cone-instable} belongs to $\mathsf{C}^{u}(F(\xi+(\alpha,\beta)))$. It is
thus sufficient to prove that with our parameters, inequality
\begin{equation}
\label{equ4-cone-instable}
\left|\frac{\lambda^{k_{0}}(b-c\beta)}{\s^{k_{0}^{(1)}}\rho^{k_{0}^{(2)}}Au(3y^2+x)\eps_{1}}-\frac{c}{
\eps_{1}}(3\beta^2+\alpha)\right|\ge A(3\beta^2+\alpha)
\end{equation}
holds. The main idea is to prove that if the parameters satisfy conditions
\eqref{condi-suff} then,  the  first term in the left hand side of \eqref{equ4-cone-instable} is
smaller than  half of the second term in the left hand side. Namely we want to prove that our
assumptions yield 
\begin{equation}
\label{equ-majotermesmoitie}
0\le \frac{\lambda^{k_{0}}(b-c\beta)}{\s^{k_{0}^{(1)}}\rho^{k_{0}^{(2)}}Au(3y^2+x)\eps_{1}}\le
\frac{c}{2\eps_{1}}(3\beta^2+\alpha).
\end{equation}

Indeed, assuming that \eqref{equ-majotermesmoitie} holds, \eqref{condi-majo Abetamax}
shows that \eqref{equ4-cone-instable} also holds because the left hand side term is at least $\disp\frac{c}{2\eps_{1}}(3\be^{2}+\al)$ and the right hand side is $\disp\frac{c}{8\eps_{1}}(3\be^{2}+\al)$: The left hand size term in at least $\disp \disp\frac{c}{2\eps_{1}}(3\be^{2}+\al)$, whereas the left hand size term is $\disp\frac{c}{8\eps_{1}}(3\be^{2}+\al)$.

\bigskip
Let us now prove that \eqref{equ-majotermesmoitie} holds.
The fact that $k_{0}$ is the first visit for $M$ close to the critical point (\ie in $\mr$)
yields some additional inequalities.
First, we claim that the vertical distance between $F^{k_{0}}(M)$ and
$F^{k_{0}+1}(\xi)$ is strictly larger than $d$, otherwise these two
points would belong to the same $R_{i}$. This is impossible
because the forward orbit of the critical point never returns to the element
$R_{4}$.
This yields:
\begin{equation}
\label{equ5-cone-instable}
\s^{k_{0}^{(1)}}\rho^{k_{0}^{(2)}}|bx-cy(y^{2}+x)|>d.
\end{equation}

Similarly, the backward orbit of the critical point never returns to the element
$R_{4}$, hence the horizontal distance between $F^{-k_{0}}(\xi)$ and
$F^{-k_{0}}(\xi+(\alpha,\beta))$ is larger than $d$. This yields:
\begin{equation}
\label{equ6-cone-instable}
\frac\alpha{\lambda^{k_{0}}}>d.
\end{equation}
Note that \eqref{equ6-cone-instable} is equivalent to
$\lambda^{k_{0}}<\disp\frac1{d}\alpha$. Reporting this in $\disp 
\frac{\lambda^{k_{0}}(b-c\beta)}{\s^{k_{0}^{(1)}}\rho^{k_{0}^{(2)}}Au(3y^2+x)\eps_{1}}$, our goal is
reached if 
$\disp| \s^{k_{0}^{(1)}}\rho^{k_{0}^{(2)}}Au(3y^2+x)|$ is uniformly large (we obviously have $\al\le 3\be^2+\al$)

\paragraph{The case $y\le 0$.}
As $x$ is non-negative, inequality \eqref{equ5-cone-instable} is equivalent to
$$\s^{k_{0}^{(1)}}\rho^{k_{0}^{(2)}}(bx+c|y|(y^2+x))>d.$$
This and \eqref{condi-b-c-betamax} imply that we get $\disp
\s^{k_{0}^{(1)}}\rho^{k_{0}^{(2)}}(2c\beta_{max}x+c\beta_{max}(y^2+x))>d$. This finally yields

\begin{equation}
\label{equ7-cone-instable}
\s^{k_{0}^{(1)}}\rho^{k_{0}^{(2)}}(3y^2+x)>\frac{d}{3c\beta_{max}}.
\end{equation}
This implies the inequality  $\disp
\s^{k_{0}^{(1)}}\rho^{k_{0}^{(2)}}A|u|(3y^{2}+x)>\frac{Ad}{3c\beta_{max}}$, and we  finally get

$$\left|\frac{\lambda^{k_{0}}(b-c\beta)}{\s^{k_{0}^{(1)}}\rho^{k_{0}^{(2)}}Au(3y^2+x)\eps_{1}}
\right|<\frac{(3
c\beta_{max})^2}{Ad^2\eps_{1}}\alpha<\left(\frac{3c\beta_{max}}{d}
\right)^2(3\beta^2+\alpha)\frac1{\eps_{1}A}.$$
Hence, we want to check $\disp \left(\frac{3c\beta_{max}}{d}
\right)^2\frac1{\eps_{1}A}<\frac{c}{2\eps_1}$. By \eqref{condi-majo Abetamax} and \eqref{equ-def-cbeta2} this is equivalent to 
$$\frac{18}{d^2A}<1.$$
Remember that $d<\frac12$ is as close as wanted to $\frac12$, and the desired inequality holds if $A$ is  sufficiently big. As we have seen above, this condition can be realized. Thus,  \eqref{equ-majotermesmoitie} holds.

\paragraph{The case $y> 0$.}
For this case,  \eqref{equ5-cone-instable} yields
$$\s^{k_{0}^{(1)}}\rho^{k_{0}^{(2)}}(bx+c|y|(y^2+x)>\s^{k_{0}^{(1)}}\rho^{k_{0}^{(2)}}|bx-cy(y^2+x)|>d.$$
This inequality  and \eqref{condi-b-c-betamax} imply
$$d<\s^{k_{0}^{(1)}}\rho^{k_{0}^{(2)}}(3c\beta_{max}x+c\beta_{max}y^2)\le
\s^{k_{0}^{(1)}}\rho^{k_{0}^{(2)}}3c\beta_{max}(x+3y^2).$$
This inequality is the same than the one in \eqref{equ7-cone-instable}. Hence,
at that point, sufficient conditions are the same.
\end{proof}

%%%%%%%%%%%%%%%%%%%%%%%%%%%%%%%
\subsection{Extending the unstable conefield }
\subsubsection{Extending to $\Lambda\setminus\CO(\xi)$}

For a point of the form $M=F(\xi+(x,y))$ (image  by $F$ of the point $\xi+(x,y)$
of the critical region), $\mathsf{C}^{u}(M)$ is already defined.

%\begin{definition}
%\label{def-temps-in-out}
\paragraph{Notation}
Let $M$ be a point in the critical zone. We write $M=\xi+(x,y)$ and set 
$\disp n_-=n_{-}(M):=\sup\{k,
x\le d\lambda^{k}\}\le +\8$ and $\disp n_+=n_+(M):=\sup\{k,\ \s^{k_1}\rho^{k_2}|bx+cy(y^{2}+x)|\le d\}\le +\8$. 
\begin{definition}
\label{def-crit-tub}
The piece of orbit $F^{-n_{-}}(M),\ldots, M,\ldots, F^{n_{+}}(M)$ is called the {\em critical tube} for $M$. 
\end{definition}

Due to the fact that the forward orbit of $\xi$ never returns into $\mr$, the critical tubes for an orbit are disjoint. Therefore, we can decompose an orbit in critical tubes and ``free'' iterates (see Figure \ref{fig-tubes}).

\begin{figure}[htbp]
\begin{center}
\includegraphics[scale=0.5]{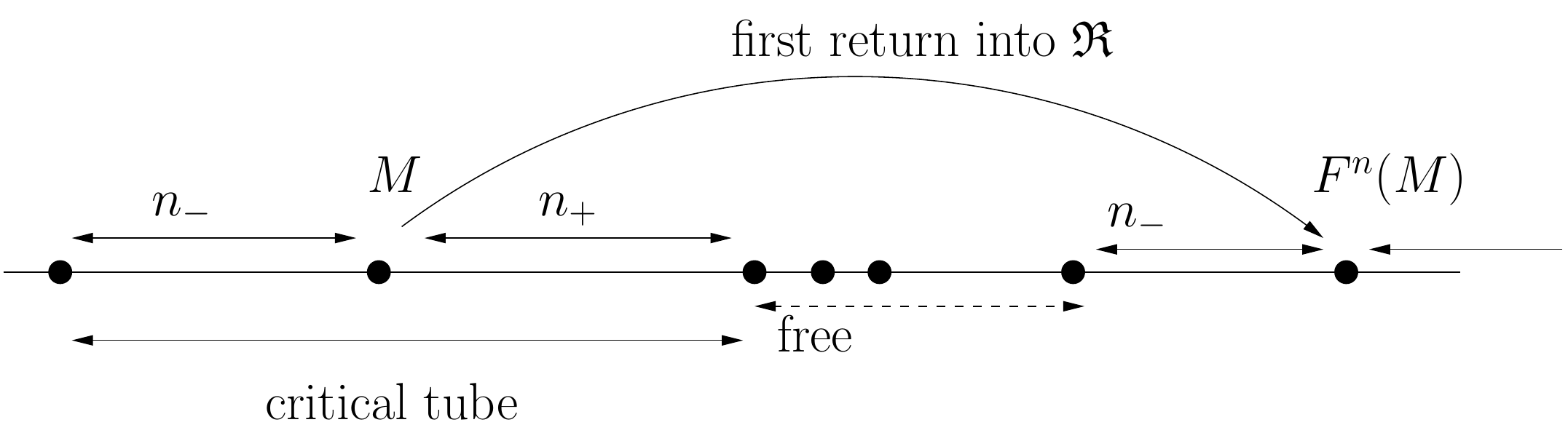}
\caption{Splitting of the orbit in tubes and free moment}
\label{fig-tubes}
\end{center}
\end{figure}

\medskip
We recall that the definition of $\alpha_{max}$ implies that for
$M=\xi+(x,y)$ in the critical zone, $F^{-k}(M)$ belongs to $R_7$ for
$k\in[1;5]$. As we chose $d$ close to $\frac12$, $l_0$ is very small and then, $n_-(M)\ge 5$.

For a point of the form $M=F^{n}(\xi+(x,y))$ (image  by $F^{n}$ of the point
$\xi+(x,y)$ of the critical zone)  with $1\le n\le n_+(M)$, we set
$$\mathsf{C}^{u}(M):=DF^{n-1}(F(\xi+(x,y))).\mathsf{C}^{u}(F(\xi)+\CF(x,y)). $$

For a point of the form $\xi+(x,y)$ we set
$$\mathsf{C}^{u}(\xi+(x,y))=\{\mathbf{v}=(v_{x},v_{y}),\ |v_{y}|\ge
\frac{\s^{n_-}}{\lambda^{n_-}}\frac{Ad}{3c\beta_{max}}|v_{x}|\}.$$
Note that $n_-=+\8$ is possible
if (and only if) $x=0$. In that case the unstable cone is just the vertical
line.

For a point of the form $M=F^{-j}(\xi+(x,y))$ with $\disp 1\le j\le n_-(\xi+(x,y))$, we set
$$\mathsf{C}^{u}(M):=DF^{-j}(\mathsf{C}^{u}(F^{j}(M))).$$

At last, for any other point $M$, we set
$$\mathsf{C}^{u}(\xi+(x,y))=\{\mathbf{v}=(v_{x},v_{y}),\ |v_{y}|\ge
\frac{Ad}{3c\beta_{max}}|v_{x}|\}.$$

\begin{proposition}
\label{prop-cone-instable}
The unstable cone field satisfies, for every point $M\in\Lambda\setminus{\CO(\xi)}$,
$$DF(M).\mathsf{C}^{u}(M)\subset \mathsf{C}^{u}(F(M)).$$
\end{proposition}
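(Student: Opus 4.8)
\textbf{Proof plan for Proposition \ref{prop-cone-instable}.}

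The plan is to verify the invariance $DF(M)\cdot\mathsf{C}^u(M)\subset\mathsf{C}^u(F(M))$ by cases according to where $M$ sits relative to the critical tubes that decompose an orbit in $\Lambda\setminus\CO(\xi)$. The key observation, already packaged in the construction of $\mathsf C^u$, is that the cone has been \emph{defined} by transport along each critical tube: for $M=F^n(\xi+(x,y))$ with $1\le n\le n_+(M)$ and for $M=F^{-j}(\xi+(x,y))$ with $1\le j\le n_-$, the cone at $F(M)$ (resp. at $F^{j}(M)$) is literally the $DF$-image of the cone at $M$. So along the interior of a critical tube the inclusion is an equality and there is nothing to prove. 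The content is at the two ``seams'' of each tube and in the free region.

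First I would dispose of the free region: if $M$ and $F(M)$ are both outside every critical tube then $F=F_0$ near $M$, $DF(M)=DF_0(M)$ is the diagonal matrix $\mathrm{diag}(\lambda,\s)$ or $\mathrm{diag}(\lambda,\rho)$, and the fixed cone $\{|v_y|\ge \frac{Ad}{3c\bm}|v_x|\}$ is mapped strictly inside itself since $\s,\rho>1>\lambda$; the slope is multiplied by $\s/\lambda$ or $\rho/\lambda$, which is $>1$. Next, the entrance seam: $M=F^{-1}(\xi+(x,y))$, i.e. $F(M)=\xi+(x,y)$ lies in the critical zone. Here $M$ is a free (linear) iterate, $DF(M)=\mathrm{diag}(\lambda,\s)$ (the first preimage is in $R_7$, so the rate is $\s$), and by definition $\mathsf C^u(\xi+(x,y))$ is the cone with slope bound $\frac{\s^{n_-}}{\lambda^{n_-}}\frac{Ad}{3c\bm}$. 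Pulling this back by $DF^{-j}$ over the $n_-$ free iterates multiplies the slope bound by $(\lambda/\s)^{n_-}$ at $F^{-n_-}(\xi+(x,y))$, giving exactly $\frac{Ad}{3c\bm}$, which matches the generic cone there; and going forward one step from $F^{-1}(\xi+(x,y))$ multiplies the generic slope bound $\frac{Ad}{3c\bm}$ by $\s/\lambda$, which is $\le \frac{\s^{n_-}}{\lambda^{n_-}}\frac{Ad}{3c\bm}$ precisely because $n_-\ge 1$ (indeed $n_-\ge 5$). So the chain of free preimages feeding the critical point is consistent, and the vertical-line case $x=0$ (when $n_-=+\infty$) is the limit, handled by noting $DF$ of the vertical line is the vertical line.

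The real work is the \emph{exit seam}, where a point $M$ with $F(M)$ at the end of a critical tube ($n=n_+(M)$, or the return described in Lemma \ref{lem-cone-instable}) leaves the transported regime and we must check that the transported cone $\mathsf C^u(F(M))=DF^{n}(F(\xi+(x,y)))\cdot\mathsf C^u(F(\xi)+\CF(x,y))$ is contained in the generic cone $\{|v_y|\ge\frac{Ad}{3c\bm}|v_x|\}$ at the free point following it. This is exactly the computation already carried out in Lemma \ref{lem-cone-instable}: equations \eqref{equ1-cone-instable}--\eqref{equ7-cone-instable} show, using \eqref{equ5-cone-instable}, \eqref{equ6-cone-instable} and conditions \eqref{condi-suff} (in particular \eqref{condi-majo Abetamax}, \eqref{equ-def-cbeta2} and $d$ close to $\frac12$), that the image of a vector $\mathbf v=(1,Au(3y^2+x))$ with $|u|\ge1$ has vertical-to-horizontal slope at least $\frac{c}{8\eps_1}(3\beta^2+\alpha)=A(3\beta^2+\alpha)\ge A\alpha\ge\frac{Ad}{3c\bm}\cdot\frac{\alpha}{\lambda^{k_0}}\cdot\frac{3c\bm\lambda^{k_0}}{d}$; more simply, at the free point the bound $\frac{Ad}{3c\bm}$ is coarser than the cubic-slope bound $A(3y^2+x)$ whenever $3y^2+x\ge \frac{d}{3c\bm}$, and \eqref{equ7-cone-instable} gives exactly $\s^{k_0^{(1)}}\rho^{k_0^{(2)}}(3y^2+x)>\frac{d}{3c\bm}$, so after the large expansion the transported cone is narrower than the generic one. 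I would therefore invoke Lemma \ref{lem-cone-instable} directly for the return-to-$\mr$ case and repeat its estimate verbatim for a generic exit (which is strictly easier, as there is no further perturbed step to control). The main obstacle is bookkeeping: making sure the cone bounds chosen at the three ``types'' of seam points ($\xi+(x,y)$ itself, its free preimages, and the exit points) are mutually consistent so that every one-step inclusion holds, rather than any single hard estimate — the one genuine estimate is the one already done in Lemma \ref{lem-cone-instable}.
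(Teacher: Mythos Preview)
Your case split misses the one genuinely nontrivial step inside the tube: the passage from $M=\xi+(x,y)\in\mr$ to $F(M)=F(\xi)+\CF(x,y)$. At this point \emph{neither} cone is defined by transport: $\mathsf C^u(\xi+(x,y))$ has slope bound $\dfrac{\s^{n_-}}{\lambda^{n_-}}\dfrac{Ad}{3c\bm}$ by fiat, and $\mathsf C^u(F(\xi+(x,y)))$ has slope bound $A(3y^2+x)$, also by fiat; moreover $DF(M)$ here is the \emph{perturbed} derivative $\begin{pmatrix}0&-\eps_1\\ b-cy & -c(3y^2+x)\end{pmatrix}$, not a diagonal matrix. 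So the assertion that ``along the interior of a critical tube the inclusion is an equality'' is false at exactly this point, and this is where the real computation sits. The paper treats this case explicitly: one checks that for $\mathbf v$ on the boundary of $\mathsf C^u(\xi+(x,y))$,
\[
|b-cy|\,\frac{\lambda^{n_-}3c\bm}{\eps_1\s^{n_-}Ad}<\frac{c}{2\eps_1}(3y^2+x),
\]
which follows from $\big(\tfrac{3c\bm}{d}\big)^2\tfrac1{\eps_1 A}<\tfrac{c}{2\eps_1}$ together with the definition of $n_-$ (yielding $\lambda^{n_-}\lesssim x/d\le(3y^2+x)/d$). This is the single-step analogue of the estimate in Lemma~\ref{lem-cone-instable}, not a consequence of it: Lemma~\ref{lem-cone-instable} starts from the post-critical cone and follows a full return, so it does not by itself control the image of the specially defined cone at $\xi+(x,y)$.

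By contrast, the exit seam you call ``the real work'' is the easy part: at $F^{n_+}(M)$ the transported cone has slope $\dfrac{\s^{k_1}\rho^{k_2}}{\lambda^{n_+}}A(3y^2+x)$, and since $|bx-cy(y^2+x)|<3c\bm(3y^2+x)$ the definition of $n_+$ gives $\s^{k_1}\rho^{k_2}(3y^2+x)>\dfrac{d}{3c\bm}$, hence the slope exceeds $\dfrac{Ad}{3c\bm}$ (this is \eqref{equ-conunstsortie}). No further use of Lemma~\ref{lem-cone-instable} is needed there. So your proof plan has the emphasis inverted and, more importantly, leaves uncovered the one seam where the perturbed derivative actually has to be estimated.
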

\begin{proof}
Some inclusions are direct consequences of the definition: this is the case if
$M$ is of the form $M=F^{-k}(M')$, with $M'$ in the critical zone and $k\le
n_{-}(M')$. This also holds if $M$ is of the form $F^{k+1}(M')$, with $M'$ in
the critical zone and $k\le n_{+} (F(M'))$.

We point out that for $F^{n_{+}}(M)$ the slope of the unstable cone is given by $\disp \frac{\s^{k_{1}}\rho^{k_{2}}}{\lambda^{n_+}}A(3y^{2}+x)$  where $k_{2}$ and $k_{1}$ satisfy $n_{+}=k_{1}+k_{2}$ and are related to the time spent in $R_{1}\cup R_{2}\cup R_{3}$ or other rectangles. It satisfies 
\begin{equation}\label{equ-conunstsortie}
\frac{\s^{k_{1}}\rho^{k_{2}}}{\lambda^{n_+}}A(3y^{2}+x)\ge \frac{Ad}{\lambda^{n_+}3c\beta_{max}}>\frac{Ad}{3c\beta_{max}},
\end{equation}
since we have 
$$d<\s^{k_{1}}\rho^{k_{2}}|bx-cy(y^{2}+x)|<\s^{k_{1}}\rho^{k_{2}}3c\bm(3y^{2}+x).$$

If $M$ is not of that form and neither belongs to the critical zone, then the
result of the proposition is just a consequence of the linearity of the map $F$
at $M$.

If $M=\xi+(x,y)$ is in $\mr$, we recall that the image of vertical vector $(0,1)$ by $DF(M)$ is $-(\eps_1,c(3y^2+x))$ which belongs to the interior of the unstable cone at $F(M)$. Now,
we left it to the reader to check that the same kind of computations than in the proof of
Lemma \ref{lem-cone-instable} show $DF(M).\mathsf{C}^{u}(M)\subset \mathsf{C}^{u}(F(M))$. Just check that we get 
$$|b-cy|\frac{\lambda^{n_{-}}3c\beta_{max}}{\eps_{1}\s^{n_{-}}Ad}<\frac{c}{2\eps_{1}}(3y^{2}+x),$$
since $\disp \left(\frac{3c\beta_{max}}{d}
\right)^2\frac1{\eps_{1}A}<\frac{c}{2\eps_1}$ holds.
\end{proof}

%%%%%%%%%%
\subsubsection{Extending the unstable conefield to $\CQ\setminus\CO(\xi)$}
We want to point out that the construction of the unstable cone field does not require the point lies in $\Lambda$. Actually, the conefield is firstly define in the post-critical zone, and then extended to the rest of $\Lambda\setminus\CO(\xi)$. The construction only requires that the piece of orbit we consider stays in $\CQ$. We can thus extend the unstable cone fields to every point in $\CQ\setminus\CO(\xi)$ and it will still satisfy 
 $DF(M).\mathsf{C}^{u}(M)\subset \mathsf{C}^{u}(F(M))$
 provided that $F(M)$ belongs to $\CQ$.

%%%%%%%%%%%%%%%%%%%%%%%%%%%%%%%%%%%
\subsection{Expansion and unstable directions}
Here, we prove that vectors in the unstable cone field are expanded by forward iterations of $DF$. We get from this the existence of the unstable direction. 
We recall that for $\mbf{v}:=(v_{x},v_{y})$ we set $||\mbf{v}||_{\8}=\max(|v_{x}|,|v_{y}|)$. 
Let us start with a technical lemma.

\begin{lemma}
\label{lem-hyperbolicite-nin-nout}
Let $M$ be in the critical zone. Let $n<+\8$ and $m<+\8$ be respectively
$n_{-}(M)$ and $n_{+}(M)$. Then for every $\mbf{v}$ in
$\mathsf{C}^{u}(F^{-n}(M))$ we get
$$||DF^{n+1+m}(F^{-n}(M)).\mbf{v}||_{\8}\ge \rho^{\frac{1}{5}(n+1+m)}.$$
\end{lemma}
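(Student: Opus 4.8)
The idea is to track a vector in the unstable cone across the critical tube, splitting the $n+1+m$ iterates into three stretches: the $n$ backward iterates $F^{-n}(M),\dots,M$ that land in $R_7$ (hence where $F$ is linear with horizontal contraction $\lambda$ and vertical expansion $\s$), the single passage through the critical region at $M=\xi+(x,y)$, and the $m$ forward iterates $F(M),\dots,F^m(M)$ along the post-critical orbit, where $F$ is again linear. First I would observe that for a vector $\mbf v$ in $\mathsf{C}^u(F^{-n}(M))$ we may normalize it to $(1,w)$ with $|w|\ge \frac{Ad}{3c\bm}$; then applying $DF^{n}$ (linear on this stretch) multiplies the vertical component by $\s^{n}$ (all $n$ visits are to $R_7$, the ``most contracting horizontally / less expanding'' zone, so the vertical factor is exactly $\s^{n}$ since the backward orbit stays in $R_7$) and the horizontal by $\lambda^{n}$, giving at $M$ a vector whose slope is at least $\frac{\s^n}{\lambda^n}\cdot\frac{Ad}{3c\bm}$ — which is exactly the cone slope prescribed in the definition of $\mathsf{C}^u(\xi+(x,y))$, consistent with Proposition~\ref{prop-cone-instable}.

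Next I would push through the critical region and out: by the cone-invariance established in Lemma~\ref{lem-cone-instable} and Proposition~\ref{prop-cone-instable}, the image stays in the unstable cone at each step, and the key quantitative fact is \eqref{equ-conunstsortie}, which says that after leaving the post-critical region the slope is again bounded below by $\frac{Ad}{3c\bm}$. Because vectors in the unstable cone are close to vertical, the $\ell^\infty$-norm is essentially the vertical component, so the norm growth along the whole tube is controlled from below by the product of the vertical expansion factors: $\s^{n}$ on the $R_7$-stretch, a factor bounded below by a constant (coming from $|{-}\eps_1|$ and the estimates \eqref{equ5-cone-instable}, \eqref{equ6-cone-instable} used in Lemma~\ref{lem-cone-instable}) across the two critical steps, and $\s^{k_1}\rho^{k_2}$ with $k_1+k_2=m$ on the post-critical stretch. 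Using $\rho>\s>3$ and \eqref{equ-lambda-sigma} one gets a net lower bound of the form $c_0\,\s^{n}\s^{k_1}\rho^{k_2}\ge c_0\,\s^{n+m}$ for the vertical component, hence for $\|DF^{n+1+m}(F^{-n}(M)).\mbf v\|_\infty$.

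Finally I would convert this into the claimed bound $\rho^{\frac15(n+1+m)}$. Since $n\ge 5$ (recalled just after Definition~\ref{def-crit-tub}) and $m$ may be small, the delicate point is that the single middle step can momentarily \emph{shrink} the norm — the horizontal component after the critical map is $-\eps_1 v_y$ with $\eps_1<1$, so one must make sure the accumulated vertical growth dominates. Here the margin $n\ge5$ is what saves the day: even if the critical step costs a bounded factor and $m=0$, the bound $\s^{n}\ge \s^5 > 3^{5}=\rho^{5\cdot\frac{\log 3}{\log\rho}}$ together with $\s>3$, $\rho>3$ gives $\s^{n}\gtrsim \rho^{\frac15(n+1)}$ with room to spare, and more generally $\s^{n+m}\ge \rho^{\frac15(n+1+m)}$ once the constant $c_0$ is absorbed using that $n+1+m\ge 6$. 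I expect the main obstacle to be precisely this bookkeeping at the critical step: showing that the contraction hidden in $\eps_1$ and in the $(3\be^2+\al)$-type factors is always beaten by the guaranteed $\s$-expansions on the two linear stretches, uniformly in $M$, which is where conditions \eqref{condi-suff} and the lower bounds \eqref{equ5-cone-instable}--\eqref{equ6-cone-instable} (distance $>d$ from the critical orbit) must be invoked carefully. The exponent $\tfrac15$ is clearly not optimal; it is chosen to give slack so that no sharp estimate on the critical step is needed.
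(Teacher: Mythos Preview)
Your overall three-stretch decomposition matches the paper's, but there is a genuine gap in the middle: the claim that the critical step costs only ``a factor bounded below by a constant'' and that the post-critical stretch then contributes a full factor $\s^{k_1}\rho^{k_2}$ to the $\ell^\infty$-norm is not correct, and this is precisely the heart of the lemma.

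Write $M=\xi+(\alpha,\beta)$ and $\tilde{\mbf v}=DF^{n}(F^{-n}(M)).\mbf v=(\lambda^{n}v_x,\s^{n}v_y)$. After the critical step,
\[
DF(M)\tilde{\mbf v}=\bigl(-\eps_1\s^{n}v_y,\;(b-c\beta)\lambda^{n}v_x-c(3\beta^{2}+\alpha)\s^{n}v_y\bigr).
\]
When $3\beta^{2}+\alpha$ is small (i.e.\ $M$ is close to $\xi$), the $\ell^\infty$-norm of this image is realized by the \emph{horizontal} coordinate $\eps_1\s^{n}|v_y|$, not the vertical one. Pushing forward by the linear $DF^{m}$ then \emph{contracts} that horizontal component by $\lambda^{m}$; the vertical component does get multiplied by $\s^{k_1}\rho^{k_2}$, but it starts from the small base $c(3\beta^{2}+\alpha)\s^{n}|v_y|$. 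The exit condition \eqref{equ-conunstsortie} only tells you that $\s^{k_1}\rho^{k_2}(3\beta^{2}+\alpha)$ is bounded below by a \emph{constant} ($\approx d/(3c\bm)$), not by $\s^{m}$. Hence the net expansion over the whole tube is of order $\s^{n}\cdot(\text{const})$ in the worst case, not $c_0\,\s^{n+m}$. Your bound would give logarithmic rate $\approx\log\s$, far better than the $\tfrac15\log\rho$ the paper claims; the weaker exponent is there for a reason.

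The paper's proof deals with this by a case split that you are missing. It establishes two lower bounds for the expansion from $M$ to $F^{m+1}(M)$: a constant bound $\frac{Ad\eps_1}{3c\bm}>1$ (your ``factor bounded below by a constant''), and a second bound $\s^{k_1}\rho^{k_2}A\eps_1\alpha$ that carries the small factor $\alpha$. If $m+1\le 2n$, the constant bound suffices: the $\s^{n}$ from the first stretch already gives rate $\ge\tfrac13\log\s$ over the whole tube. If $m+1>2n$, one uses the second bound together with $\alpha\gtrsim d\lambda^{n}$ (definition of $n_-$) and the relation $\lambda^{-1}<\rho^{1.2}$ from \eqref{equ-lambda-sigma} to trade $\alpha$ against part of $\rho^{m}$; this is where the arithmetic with $1.2$ and the margin $n\ge5$, $m\ge 2n$ actually enters. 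The ``delicate point'' is therefore not that $m$ might be small, but that the loss at the critical step scales like $3\beta^{2}+\alpha$ and must be compensated differently depending on the ratio $m/n$.
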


\begin{proof}
Let us set $M=\xi+(\alpha,\beta)$. Note that our assumptions  $n,m<+\8$ 
yield that neither $\alpha$ nor $\beta$ vanish. 
  Let us pick
$(v_{x},v_{y})=\mbf{v}\neq0$ in $\mathsf{C}^{u}(F^{-n}(M))$. We recall 
$||\mbf{v}||_{\8}=\max(|v_{x}|,|v_{y}|)$.
We also consider the
vector $\mbf{w}=(1,A(3\beta^2+\alpha))$ in $\mathsf{C}^{u}(F(M))$.

By definition of the cone field $\mathsf{C}^{u}$, $|v_{y}|\ge \disp
\frac{Ad}{3c\beta_{max}}|v_{x}|\ge |v_{x}|$, where the last inequality follows
from our choice of the parameters (see Inequality \eqref{equ-grandApour
expansion}). Hence, there is exponential expansion for $\mbf v$ during the piece
of orbit between $F^{-n}(M)$ and $M$ because $DF=DF_{0}$ which expands verticals and contracts horizontals. 

Our strategy is to consider two cases. Either $m$ is ``small'' with respect to
$n$ and the expansion during the entrance phase (from $-n$ to 0) is sufficient
to ensure global exponential expansion between $-n$ and $m$, or $m$ is large,
and then there  also is exponential growth in the unstable direction between
times $0$ and $m$. 

\bigskip
By construction of the unstable cone, the vector $\mbf w$ is more horizontal
than $DF^{n+1}(x).\mbf v$, hence is less expanded (remember that $F\equiv F_{0}$ for these points). Moreover, the vertical
component of $DF^{m}(F(M)).\mbf{w}$ is bigger (in absolute value) than the
horizontal one (its slope is at least $\disp\frac{Ad}{3c\beta_{max}}$ which is
larger than $\disp\frac1{\eps_{1}}$ by Inequality \eqref{equ-grandApour
expansion}).

Therefore, the expansion (between times $F(M)$ and $F^{m}(M)$) of any vector in
the unstable cone $\mathsf{C}^{u}(F(M))$ is larger than $\s^{m}A(3\beta^2+\alpha)\ge
\s^{m}A(\beta^2+\alpha)\ge \s^{m}A\alpha.$ 

On the other hand, the matrix of $DF(M)$ in the canonical basis is 
$$\left(\begin{array}{cc}0 &  -\eps_{1}\\ b-c\beta &
-c(3\beta^2+\alpha)\end{array}\right).$$
By the definition of the map we get 
\begin{equation}
\label{equ1-lem-hyperbo}
\wt{\mbf  v}:=DF^n(F^{-n}(M)).\mbf{v}=(\lambda^{n}v_{x},\s^{n}v_{y}),
\end{equation} 
and the vertical component of this vector is bigger than the horizontal one. 
Therefore we get 
$$||DF(M)\wt{\mbf v}||_{\8}\ge \eps_{1}||\wt{\mbf v}||_{\8},$$
which yields :
\begin{subeqnarray}
\slabel{equ2.1-expansion 0-m}
\text{ on the one hand, }||DF^{m+1}(M)\wt{\mbf v}||_{\8}\ge \rho^{m_{2}}\s^{m_{1}}A\eps_{1}\alpha.||\wt{\mbf v}||_{\8},\\
\mbox{on the other hand, due to \eqref{equ-conunstsortie}, }||DF^{m+1}(M)\wt{\mbf v}||_{\8}\ge
\frac{Ad\eps_{1}}{3c\beta_{max}}.||\slabel{equ2.2-expansion 0-m}\wt{\mbf
v}||_{\8}.
\end{subeqnarray}

\paragraph{First case: $m+1\le 2n$.} We use Inequality \eqref{equ2.2-expansion 0-m}. We
recall that Inequality \eqref{equ-grandApour expansion} means 
$$\frac{Ad\eps_{1}}{3c\beta_{max}}>1,$$
which shows that there is expansion (but not necessarily exponential) in the
unstable direction between $M$ and $F^{m}(M)$. 
Therefore, the expansion for $\mbf{v}$ by $DF^{n+1+m}(F^{-n}(M))$ is larger than
$\disp \s^{n}$ (the expansion between $F^{-n}(M)$ and $M$). 

Hence,  the logarithmic expansion between $F^{-n}(M)$ and $F^{m}(M)$ is larger
than
$$\frac{n}{n+1+m}\log\s\ge \frac{1}{3}\log\s\ge \frac1{5}\log\s\ge \frac15\log\rho.$$

\paragraph{Second case: $m+1>2n$.} We use Inequality \eqref{equ2.1-expansion 0-m}.
Using Inequality \eqref{equ-lambda-sigma}, the expansion between $M$ and
$F^{m}(M)$ in the unstable direction is larger than 
$$\rho^{m}A\eps_{1}\alpha\ge
\rho^{1.2\frac{m}2+0.8\frac{m}2}A\eps_{1}\alpha\ge\rho^{1.2n+0.8\frac{m}2-\frac35}
A\eps_{1}\alpha\ge \frac\alpha{\lambda^n}A\eps_{1}\rho^{0.4m-\frac35}\ge
\rho^{\frac{m+1}5} \frac\alpha{\lambda^n}A\eps_{1}$$
(note that $m\ge 9$ since $n\ge 5$).
Now, we use Inequalities \eqref{equ6-cone-instable} and again
\eqref{equ-grandApour expansion} and \eqref{equ-cbeta-lambda} to conclude that the logarithm expansion
between $F^{-n}(M)$ and $F(M)$ in the unstable direction is larger than 
$$\frac{n+\frac15(m+1)}{n+1+m}\log\rho\ge \frac15\log\rho.$$
 The proof is finished. 
\end{proof}

We can now prove expansion for vectors in the unstable cone filed. 

\begin{proposition}
\label{prop-expansion}
Let $M$ be in $\Lambda\setminus\CO(\xi)$. Then for every $0\neq \mbf{v}\in\mathsf{C}^{u}(M)$, we get
$$\liminf_{\ninf}\frac1n\log||DF^{n}(M).\mbf{v}||_{\8}\ge \frac15\log\rho.$$
\end{proposition}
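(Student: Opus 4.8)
The plan is to decompose the forward orbit of $M$ into the alternating sequence of critical tubes and free iterates described after Definition~\ref{def-crit-tub}, and to control the expansion on each piece separately. First I would observe that since $M\in\Lambda\setminus\CO(\xi)$ and the forward orbit of $\xi$ never returns to $\mr$, every critical tube encountered by the forward orbit of $M$ is of the form $F^{-n}(M'),\ldots,M',\ldots,F^{n_+(M')}(M')$ with both $n=n_-(M')<+\infty$ and $m=n_+(M')<+\infty$ finite (if some $n_\pm$ were infinite we would be on the critical orbit or its backward orbit would sit in $R_7$ forever, contradicting $M\in\Lambda$, as in Lemma~\ref{lem-cone-instable}). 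On any maximal run of free iterates between two tubes, $DF=DF_0$, the cone $\mathsf{C}^u$ has slope at least $\frac{Ad}{3c\beta_{max}}>\frac1{\eps_1}>1$ by \eqref{equ-grandApour expansion}, so a vector in the cone is essentially vertical and gets expanded by the product of the local vertical rates, each of which is $\s$ or $\rho$, hence by at least $\s^{\#\text{free iterates}}\ge\rho^{\#\text{free iterates}}$ — in particular by at least $\rho^{\frac15(\#\text{free iterates})}$.

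Next I would invoke Lemma~\ref{lem-hyperbolicite-nin-nout}: for a critical tube of total length $\ell=n+1+m$, a vector entering in $\mathsf{C}^u(F^{-n}(M'))$ satisfies $\|DF^{\ell}(F^{-n}(M')).\mbf v\|_\8\ge\rho^{\frac15\ell}$. The point is that the lemma already packages the whole tube (entrance phase, the nonlinear step at $M'$, and the exit phase) into a single clean lower bound $\rho^{\ell/5}$ on the \emph{total} expansion factor across the tube, starting from an arbitrary nonzero cone vector and ending with a cone vector (by Lemma~\ref{lem-cone-instable} and Proposition~\ref{prop-cone-instable}). So, writing the orbit segment $M,F(M),\dots,F^n(M)$ as a concatenation of free stretches and critical tubes, on each free stretch of length $f_i$ we gain a factor $\ge\rho^{f_i/5}$ and on each critical tube of length $\ell_j$ we gain a factor $\ge\rho^{\ell_j/5}$; multiplying, $\|DF^n(M).\mbf v\|_\8\ge\rho^{\frac15(\sum f_i+\sum\ell_j)}\cdot C$, where $C$ absorbs at most one partial (incomplete) tube or free stretch at the right end of the segment, together with the initial norm $\|\mbf v\|_\8$ — a bounded multiplicative error that disappears after taking $\frac1n\log$ and $\liminf$. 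Since $\sum f_i+\sum\ell_j\ge n-O(1)$, taking logarithms, dividing by $n$ and letting $n\to\infty$ gives $\liminf_{n\to\infty}\frac1n\log\|DF^n(M).\mbf v\|_\8\ge\frac15\log\rho$.

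The one point requiring a little care is the matching of cones at the junctions: at the end of a critical tube one is in $\mathsf{C}^u(F^{n_+(M')}(M'))$, and one must check this cone is contained in the cone $\{|v_y|\ge\frac{Ad}{3c\beta_{max}}|v_x|\}$ used on the subsequent free stretch — this is exactly Inequality~\eqref{equ-conunstsortie} in the proof of Proposition~\ref{prop-cone-instable}, which shows the exit slope $\frac{\s^{k_1}\rho^{k_2}}{\lambda^{n_+}}A(3y^2+x)$ is $>\frac{Ad}{3c\beta_{max}}$. Conversely, entering a tube, any free-stretch cone vector lies in $\mathsf{C}^u(F^{-n_-(M')}(M'))$ by the definition of the cone field at $F^{-j}$-preimages, so Lemma~\ref{lem-hyperbolicite-nin-nout} applies with no loss. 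The only genuine obstacle — and it is mild — is bookkeeping the boundary term: the forward segment of length $n$ may start in the middle of a tube or end in the middle of one, and one must argue that such a partial tube still contributes a factor bounded below by a constant (using that within a tube the entrance phase already expands by $\ge\s^k\ge 1$, and the single nonlinear step expands by $\ge\eps_1$ in $\|\cdot\|_\8$ as shown in \eqref{equ1-lem-hyperbo} and the line after it), so that the loss is $O(1)$ uniformly in $n$. Once that is in place the $\liminf$ estimate follows immediately.
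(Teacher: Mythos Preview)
Your approach is exactly the paper's: decompose the forward orbit into critical tubes and free stretches, invoke Lemma~\ref{lem-hyperbolicite-nin-nout} on each full tube and the linear rate $\ge\rho$ per step on each free stretch (minor slip: since $\rho<\s$ by \eqref{equ-lambda-sigma}, the product over a free stretch is only $\ge\rho^{\#}$, not $\ge\s^{\#}$, but this is what you need anyway), then multiply and take the $\liminf$. You are in fact more explicit than the paper about the cone-matching at the junctions via \eqref{equ-conunstsortie} and about the boundary term.

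The one place your justification is incomplete is the claim that a partial tube at the right end of $[0,n]$ contributes a factor bounded below by a constant. Your argument (entrance expands by $\ge\s^k\ge1$, nonlinear step by $\ge\eps_1$) stops at the critical step; but if $n$ lands in the \emph{exit} phase, the vector just after the nonlinear step can be nearly horizontal (slope $A(3\beta^2+\alpha)$ with $3\beta^2+\alpha$ tiny), and for the next $j$ linear steps its $\|\cdot\|_\infty$-norm is governed by the shrinking horizontal component $\lambda^{j}\eps_1\s^{n_-}|v_y|$. The claim is nevertheless correct: tracking also the vertical component and using $3\beta^2+\alpha\ge\alpha>d\lambda^{n_-+1}$ (the same mechanism as in the second case of Lemma~\ref{lem-hyperbolicite-nin-nout}), the minimum over $j$ of the max of the two components is still $\ge\eps_1$ up to a fixed constant, because $\lambda^{a/(a+1)}\s>1$ for $a=-\log\lambda/\log\rho\in(1,1.2)$. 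The paper's own proof does not address this boundary issue at all, so your version is already the more careful of the two.
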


\begin{proof}
The result is obvious if $M$ returns only a finite number of times in the
critical zone. We thus focus on the difficult case, when $M$ visits infinitely
many times the critical zone by iterations of $F$. For simplicity we assume that
$M$ belongs to the critical zone. Then we denote by $n_{i}$ the time of the
$i^{th}$-visit to the critical zone.
We also denote by $n_{-}(i)$ and $n_{+}(i)$ the integers $n_{-}(F^{n_{i}}(M))$ and $n_{+}(F^{n_{i}}(M))$.

We point out that due to our choice of $d\approx \frac12$ and $l_{0}\approx\frac1\lambda$, $F^{n_{i}-j}(M)$ belongs to $R_{7}$ for $0<j<n_{-}(i)$. On the other hand $F^{n_{i}+j}(M)$ belongs to the same $R_{t}$ than $F^{j}(\xi)$. Since $\xi$ never comes back to $R_{4}\cup R_{7}$ by forward iterates, it is sure that 
$$n_{i}+n_{+}(i)<n_{i+1}-n_{-}(i+1).$$

Lemma \ref{lem-hyperbolicite-nin-nout} shows that for every $i\ge 1$, the logarithmic
expansion between $F^{n_{i}-n_{-}(i)}(M)$ and $F^{n_{i}+1+n_{+}(i)}(M)$ is
bigger than $(n_{-}(i)+1+n_{+}(i))\frac15\log\rho$. 

Moreover, we get for every $i\ge 1$, $n_{i}+1+n_{+}(i)\le
n_{i+1}-n_{-}(i+1)$. By construction of the unstable cone field, the
logarithmic expansion between $F^{n_{i}+1+n_{+}(i)}(M)$ and $F^{
n_{i+1}-n_{-}(i+1)}(M)$ is bigger than $\log\rho$ (considering the norm given by the greatest coordinate in absolute value). 
\end{proof}

Actually, we let the reader check that Lemma \ref{lem-hyperbolicite-nin-nout} and the splitting of orbits as done in the proof of Proposition \ref{prop-expansion} show the next two results. For this we need to introduce a new notion:

\begin{definition}
\label{def-complete}
Let $M$ be in $\disp\bigcap_{k=0}^{n} F^{-k}(\CQ)$. We say that the piece of orbit $M,\ldots, F^{n}(M)$ is \textit{complete} if the time interval $[0,n]$ contains a full number of critical tubes. 
\end{definition}

Then Lemma \ref{lem-hyperbolicite-nin-nout} yields:

\begin{proposition}
\label{prop-expaEucomplete}\label{prop-rem-expan-cu}
Let $M,\ldots, F^{n}(M)$ be a complete piece of orbit. Then, for every $\mbf v$ in $\mathsf{C}^{u}(M)$, 
$$||DF^{n+1}(M).\mbf v ||_{\8}\ge \rho^{\frac{n}5}||\mbf v ||_{\8}.$$
\end{proposition}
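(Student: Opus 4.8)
The plan is to deduce Proposition \ref{prop-expaEucomplete} directly from Lemma \ref{lem-hyperbolicite-nin-nout} by organizing the complete piece of orbit $M,\ldots,F^{n}(M)$ into its constituent critical tubes and the free iterates between them, exactly as in the proof of Proposition \ref{prop-expansion}. Since the piece of orbit is complete in the sense of Definition \ref{def-complete}, the interval $[0,n]$ decomposes as an alternating concatenation of (possibly a leading block of) free iterates, a critical tube, free iterates, a critical tube, and so on, with a trailing block of free iterates; because the forward orbit of $\xi$ never returns to $\mr$, these tubes are pairwise disjoint and this decomposition is unambiguous. Write the successive visit times to the critical zone as $n_{1}<n_{2}<\cdots<n_{r}$ inside $[0,n]$, and set $n_{-}(i),n_{+}(i)$ as in Proposition \ref{prop-expansion}, so that the $i$-th critical tube occupies the time interval $[n_{i}-n_{-}(i),\,n_{i}+n_{+}(i)]$.

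First I would handle the possible leading free block: if $M$ itself is not in the critical zone (or more precisely if $0<n_{1}-n_{-}(1)$), then on $[0,\,n_{1}-n_{-}(1)]$ the map is linear ($F\equiv F_{0}$) and any $\mbf v\in\mathsf{C}^{u}(M)$ has $|v_{y}|\ge|v_{x}|$ by \eqref{equ-grandApour expansion}, so $DF^{k}$ expands it by at least $\s^{k}\ge\rho^{k/5}$ per step (using $\s,\rho>3$ and \eqref{equ-lambda-sigma}); here one must also check that the image cone relations propagate, but that is exactly Proposition \ref{prop-cone-instable} together with the observation in \eqref{equ-conunstsortie} that cone slopes stay above $\frac{Ad}{3c\bm}$. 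Then, on each critical tube, Lemma \ref{lem-hyperbolicite-nin-nout} gives logarithmic expansion at least $\frac15\log\rho$ per unit time over the whole block $[n_{i}-n_{-}(i),\,n_{i}+1+n_{+}(i)]$. On each intermediate free block, between $F^{n_{i}+1+n_{+}(i)}(M)$ and $F^{n_{i+1}-n_{-}(i+1)}(M)$, the same linearity-plus-cone argument gives expansion factor at least $\s$ per step, hence again at least $\rho^{1/5}$ per step. Finally the trailing free block from $F^{n_{r}+1+n_{+}(r)}(M)$ to $F^{n+1}(M)$ is treated identically. Multiplying these lower bounds across the whole decomposition, the total expansion of $\mbf v$ is at least $\rho^{(n+1)/5}\ge\rho^{n/5}$ (one can be slightly wasteful and absorb the $+1$), which gives the claimed inequality $\|DF^{n+1}(M).\mbf v\|_{\8}\ge\rho^{n/5}\|\mbf v\|_{\8}$.

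The bookkeeping obstacle — and the only genuinely delicate point — is to make sure the expansion estimates on the free blocks really do hold in the $\|\cdot\|_{\8}$ norm, i.e. that a vector which is already in the unstable cone at the exit of a critical tube has its vertical coordinate dominating, so that the vertical expansion rate $\s$ (resp. $\rho$) of $DF_{0}$ is actually realized by $\|\cdot\|_{\8}$ rather than being spoiled by the horizontal contraction. This is guaranteed by \eqref{equ-conunstsortie}: at the exit time $n_{i}+n_{+}(i)$ the cone slope is at least $\frac{Ad}{3c\bm}$, which by \eqref{equ-grandApour expansion} exceeds $1$ (indeed exceeds $\frac1{\eps_1}$), and $DF_{0}$ only makes the slope steeper on the free iterates that follow; the same remark covers the leading block. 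With that observation in place the argument is just the concatenation of Lemma \ref{lem-hyperbolicite-nin-nout} with elementary per-step bounds, so I would present it as: "We let the reader combine Lemma \ref{lem-hyperbolicite-nin-nout} with the orbit splitting of Proposition \ref{prop-expansion}; the only point to check is that cone slopes on free iterates stay above $1$, which follows from \eqref{equ-conunstsortie} and \eqref{equ-grandApour expansion}." In fact this is essentially how the paper itself phrases it, and I would follow that, spelling out only the free-block estimate.
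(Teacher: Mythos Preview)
Your proposal is correct and follows exactly the approach the paper indicates: the paper does not give a separate proof of Proposition~\ref{prop-expaEucomplete} but simply writes ``we let the reader check that Lemma~\ref{lem-hyperbolicite-nin-nout} and the splitting of orbits as done in the proof of Proposition~\ref{prop-expansion} show the next two results,'' which is precisely the decomposition into critical tubes and free blocks that you carry out. Your added remark that \eqref{equ-conunstsortie} and \eqref{equ-grandApour expansion} guarantee vertical dominance on the free blocks (so that $\|\cdot\|_\infty$ really sees the vertical expansion) is the only detail the paper leaves implicit, and you handle it correctly.
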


We claim that Lemma \ref{lem-hyperbolicite-nin-nout} also yields the next proposition. Its statement is la little bit heavy but it is very usual in the so-called Pesin theory: expansion and contraction are at uniform exponential scale up to a multiplicative term which depends on the point (see Fig. \ref{fig-tipopesinEu}). Note that we do not request the point $M$ to be in the critical zone.

\begin{proposition}
\label{prop-expanEupartielle}
Let $M$ be in a critical tube, say $[-N_{-},N_{+}]$. Then, there exist two positive numbers $l^{+}(M)$ and $l^{-}(M)$, such that \begin{enumerate}
\item for any $n>N_{+}$, if $F^{N_{+}+1}(M),\ldots F^{n}$ is complete, then for every vector $\mbf v$ in $E^{u}(M)$, 
$$||DF^{n}(M).\mbf v ||_{\8}\ge l^{+}(M)\rho^{\frac{n}5}||\mbf v ||_{8}.$$
\item for any $n>N_{-}$, if $F^{-n},\ldots F^{-N_{-}-1}(M)$ is complete, then for every vector $\mbf v$ in $E^{u}(M)$, 
$$||DF^{-n}(M).\mbf v ||_{\8}\le l^{-}(M)\rho^{-\frac{n}5}||\mbf v ||_{8}.$$
\end{enumerate}
Moreover, $l^{+}(M)=l^{+}(M')$ if $M'$ belongs to $\disp B_{N_{+}+1}(M,l_{0})$ and $l^{-}(M)=l^{-}(M')$  if $M'$ belongs to $\disp \bigcap_{k=0}^{N_{-}+1} F^{k}B(F^{-k}(M),l_{0})$.
\end{proposition}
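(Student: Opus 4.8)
The statement is a localized "Pesin-type" refinement of Proposition~\ref{prop-expaEucomplete}: instead of starting a complete piece of orbit exactly at the boundary of a critical tube, we allow $M$ to sit \emph{inside} a critical tube and absorb the finitely many "extra" iterates (those of the incomplete tube surrounding $M$) into the multiplicative constants $l^{\pm}(M)$. The plan is to quantify exactly how much expansion or contraction can be lost or gained on the portion of the ambient critical tube $[-N_-,N_+]$ that lies between $M$ and the nearest tube boundary, and to check that this quantity depends only on the piece of orbit of $M$ that stays inside $\mr$-controlled regions for a bounded number of steps.

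\textbf{Step 1: reduce to the complete case.} Fix $M$ in the critical tube $[-N_-,N_+]$ and take $n>N_+$ with $F^{N_++1}(M),\ldots,F^n(M)$ complete. By Lemma~\ref{lem-hyperbolicite-nin-nout} (applied tube by tube, exactly as in the proof of Proposition~\ref{prop-expansion}) the logarithmic expansion of any $\mbf v\in E^u(F^{N_++1}(M))$ between times $N_++1$ and $n$ is at least $\frac{1}{5}(n-N_+-1)\log\rho$, giving $\|DF^{n-N_+-1}(F^{N_++1}(M)).\mbf v\|_\8\ge \rho^{(n-N_+-1)/5}\|\mbf v\|_\8$. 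It remains to relate $DF^{N_++1}(M).\mbf v$ (for $\mbf v\in E^u(M)$) to $\mbf v$ itself: we need a lower bound $\|DF^{N_++1}(M).\mbf v\|_\8\ge l^+(M)\rho^{(N_++1)/5}\|\mbf v\|_\8$ with $l^+(M)$ a positive constant, which is a \emph{finite-time} estimate on the incomplete tube portion $[0,N_+]$ together with the jump across the critical region. This follows from the explicit form of $DF=DF_0$ on the linear regions (each step expands the vertical coordinate by $\s$ or $\rho$ and the vector stays in the unstable cone, so its $\|\cdot\|_\8$ grows), together with the single non-linear step where, by Inequality~\eqref{equ2.2-expansion 0-m} (i.e.\ \eqref{equ-conunstsortie}), the expansion is at least $\frac{Ad\eps_1}{3c\bm}>1$. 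Packaging these finitely many factors and dividing by the worst-case $\rho^{(N_++1)/5}$ we never would have gotten from them defines $l^+(M)$, which is positive since $N_+=n_+(M)<+\8$ ensures no degenerate vanishing slope occurs along the tube. The backward statement is symmetric, iterating $DF^{-1}$ on the portion $[-N_-,0]$, using that $F^{-k}(M)\in R_7$ for $0<k<N_-$ so $DF^{-1}=DF_0^{-1}$ contracts verticals by $\s^{-1}$, and using $n_-(M)<+\8$ to keep $l^-(M)$ finite and positive.

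\textbf{Step 2: locality of the constants.} For the "moreover" clause, observe that $l^+(M)$ is built only from: the length $N_+=n_+(M)$ of the incomplete forward tube; the sequence of expansion rates ($\s$ versus $\rho$) encountered on $[0,N_+]$; and the single non-linear factor at the critical-region crossing, whose lower bound $\frac{Ad\eps_1}{3c\bm}$ is \emph{uniform} (independent of the point). All of these data are determined by the forward itinerary $M,F(M),\ldots,F^{N_++1}(M)$ relative to the partition into rectangles of $G_0^{1,1}$ and the critical region, hence by the coding of $M$ on the first $N_++1$ symbols; so $l^+(M)=l^+(M')$ whenever $M'\in B_{N_++1}(M,l_0)$ (same rectangles of generation $1$, hence same rates and same $N_+$). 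Symmetrically, $l^-(M)$ depends only on the backward itinerary $F^{-1}(M),\ldots,F^{-N_--1}(M)$, so it is constant on $\bigcap_{k=0}^{N_-+1}F^k B(F^{-k}(M),l_0)$.

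\textbf{Main obstacle.} The delicate point is verifying that $l^+(M)$ and $l^-(M)$ stay bounded away from $0$ uniformly enough to be useful, and in particular that the non-linear crossing does not destroy an unbounded amount of expansion when $M$ is very close to the critical point. The assumption $N_-=n_-(M)<+\8$ and $N_+=n_+(M)<+\8$ is exactly what prevents $\alpha=0$ or $\beta=0$ (the degenerate cases where the unstable cone collapses to the vertical line), as in the first paragraph of the proof of Lemma~\ref{lem-hyperbolicite-nin-nout}; combined with the cone-invariance (Proposition~\ref{prop-cone-instable}) guaranteeing vectors never leave $\mathsf{C}^u$, this keeps all finitely many factors strictly positive. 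The remaining work is the bookkeeping of Step~1, which is routine given Lemma~\ref{lem-hyperbolicite-nin-nout}, so I would state it and leave the computation to the reader, consistently with the phrasing "we let the reader check" used just before the proposition.
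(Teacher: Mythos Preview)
Your proposal is correct and follows precisely the approach the paper itself indicates: the authors state that Proposition~\ref{prop-expanEupartielle} is a consequence of Lemma~\ref{lem-hyperbolicite-nin-nout} together with the tube-by-tube splitting used in the proof of Proposition~\ref{prop-expansion}, and they explicitly leave the verification to the reader. Your write-up supplies exactly those details---applying Proposition~\ref{prop-expaEucomplete} on the complete portion and packaging the finitely many iterates of the ambient incomplete tube into the constants $l^{\pm}(M)$, then reading off the locality of these constants from the finite itinerary---so there is nothing to add.
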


\begin{figure}[htbp]
\begin{center}
\includegraphics[scale=0.55]{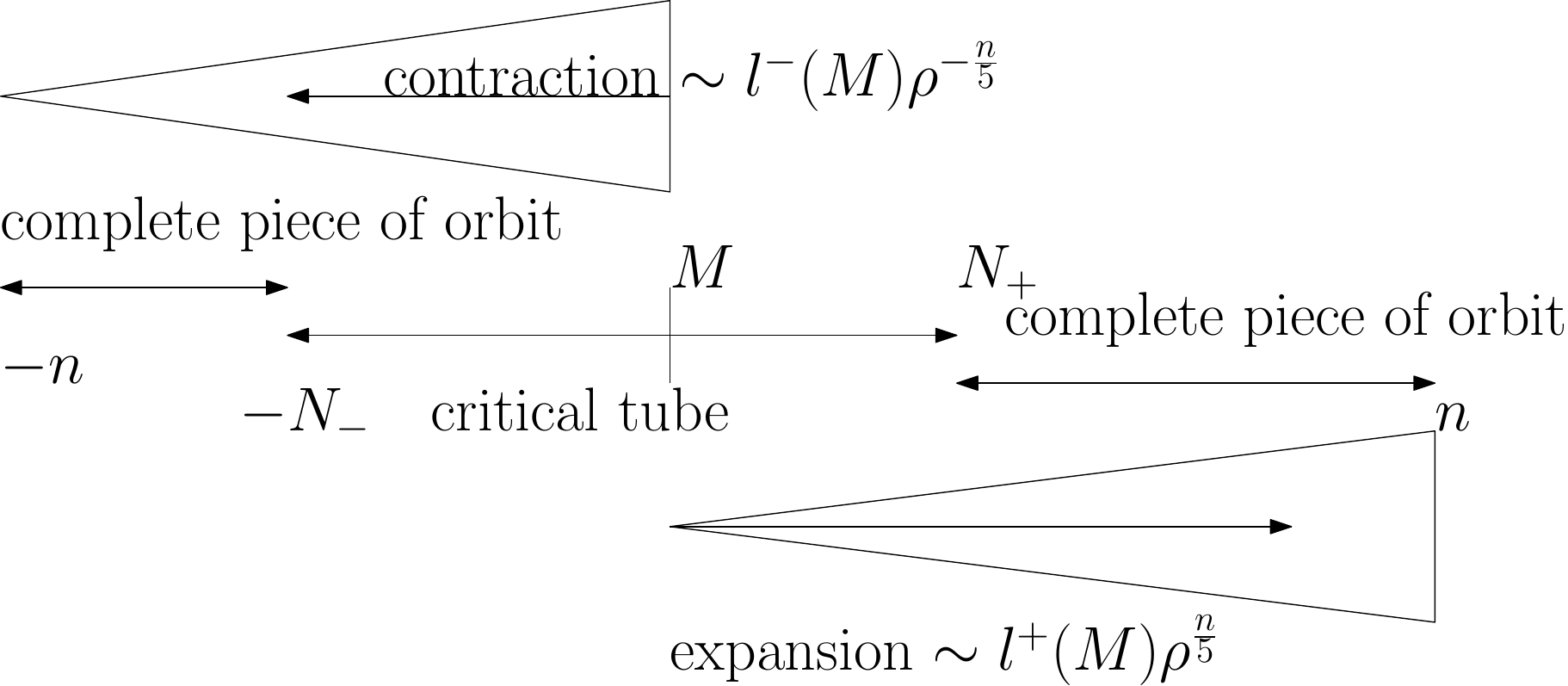}
\caption{Expansions and contractions in the unstable cone.}
\label{fig-tipopesinEu}
\end{center}
\end{figure}

\bigskip
Now, we can now  the existence of the unstable direction.

\begin{proposition}
\label{prop-def-Eu}
For $M\in\Lambda\setminus\CO(\xi)$, $\disp\bigcap_{n\ge 0}DF^{n}(\mathsf{C}^{u}(F^{-n}(M)))$ is a single direction in $\R^{2}$. It is called the unstable direction at $M$ and denoted $E^{u}(M)$. 
\end{proposition}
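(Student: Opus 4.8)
The plan is the classical cone--nesting argument. For $M\in\Lambda\setminus\CO(\xi)$ write $\mathsf{C}_n:=DF^{n}(F^{-n}(M)).\mathsf{C}^{u}(F^{-n}(M))$, a closed cone in $\R^{2}$. First I would check that $(\mathsf{C}_n)_{n\ge 0}$ is a decreasing sequence of nonempty closed cones: applying Proposition \ref{prop-cone-instable} at the point $F^{-(n+1)}(M)\in\Lambda\setminus\CO(\xi)$ gives $DF(F^{-(n+1)}(M)).\mathsf{C}^{u}(F^{-(n+1)}(M))\subset\mathsf{C}^{u}(F^{-n}(M))$, and applying the linear isomorphism $DF^{n}(F^{-n}(M))$ to both sides yields $\mathsf{C}_{n+1}\subset\mathsf{C}_n$. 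Since each $\mathsf{C}_n$ is the image of a nonempty closed cone under a linear isomorphism, $\mathsf{C}_\infty:=\bigcap_{n\ge 0}\mathsf{C}_n$ is a nonempty closed cone, and it remains to show that it contains a single direction.

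For this I would argue by contradiction. Suppose $\mathsf{C}_\infty$ contains two non-collinear vectors $v,w$. For each $n$ put $v_n:=DF^{-n}(M).v\in\mathsf{C}^{u}(F^{-n}(M))$ and $w_n:=DF^{-n}(M).w\in\mathsf{C}^{u}(F^{-n}(M))$, so that $DF^{n}(F^{-n}(M)).v_n=v$ and likewise for $w$. Multiplicativity of the determinant together with $|\det DF^{n}(F^{-n}(M))|\le K^{n}$, where $K:=\sup_{\CQ}|\det DF|$, gives
$$|\det(v,w)|=|\det DF^{n}(F^{-n}(M))|\cdot|\det(v_n,w_n)|\le 2\,K^{n}\,\|v_n\|_{\8}\,\|w_n\|_{\8}.$$
Here $K<1$: on the linear part $|\det DF_0|\in\{\lambda\s,\lambda\rho\}\subset(0,1)$, on $\mr$ one computes $|\det DF|=\eps_{1}|b-cy|\le 3\eps_{1}c\bm<1$ by \eqref{equ-cbeta-lambda}, and the transition zone of $R_{4}$ is handled the same way from the explicit interpolation. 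On the other hand $v_n,w_n$ belong to the unstable cone, hence are expanded exponentially under $DF^{n}(F^{-n}(M))$: if the backward orbit of $M$ meets the critical zone only finitely often the conclusion is immediate from the linear estimates, so we may assume it meets it infinitely often, and then for a cofinal set of $n$ the piece $F^{-n}(M),\ldots,F^{-1}(M)$ is complete; for such $n$ Proposition \ref{prop-expaEucomplete} gives $\|v\|_{\8}=\|DF^{n}(F^{-n}(M)).v_n\|_{\8}\ge\rho^{(n-1)/5}\|v_n\|_{\8}$, and similarly for $w_n$. Substituting,
$$|\det(v,w)|\le 2\,\rho^{2/5}\big(K\rho^{-2/5}\big)^{n}\,\|v\|_{\8}\,\|w\|_{\8},$$
and since $\rho>3$ we have $K<1<\rho^{2/5}$, so the right-hand side tends to $0$ along this cofinal sequence. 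Hence $\det(v,w)=0$, a contradiction; therefore $\mathsf{C}_\infty$ is a single direction, which we call $E^{u}(M)$.

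The two points needing a little care are both minor. One is the uniform bound $K<1$ (equivalently $K<\rho^{2/5}$) in the transition region of $R_{4}$: it is clear on the linear and critical pieces, and in the interpolation region it follows from the explicit construction — alternatively one may just recall that by Lemma \ref{lem-condi-openset} the parameter $\rho$, hence $\rho^{2/5}$, can be taken arbitrarily large while preserving all other conditions. The other is the combinatorial bookkeeping needed to invoke Proposition \ref{prop-expaEucomplete} with the clean factor $\rho^{(n-1)/5}$; if one prefers, the incomplete ends of the backward orbit can be absorbed into the point-dependent constant supplied by Proposition \ref{prop-expanEupartielle}. I expect this second, purely combinatorial point about ``complete'' pieces of the backward orbit to be the main (and still rather mild) obstacle.
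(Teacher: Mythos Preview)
Your argument is correct and is essentially the same as the paper's: both show the nested cones have nonempty intersection, then use the determinant/angle formula together with $|\det DF|<1$ and exponential expansion in the unstable cone (the paper via Proposition~\ref{prop-expanEupartielle}, you via Proposition~\ref{prop-expaEucomplete} with Proposition~\ref{prop-expanEupartielle} as fallback) to force the intersection down to a single direction. The only cosmetic difference is that the paper phrases the area identity through $\sin(\measuredangle(\cdot,\cdot))$ on normalized vectors rather than through $\det(v,w)$ directly.
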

\begin{proof}
The set $\disp\bigcap_{n\ge 0}DF^{n}(\mathsf{C}^{u}(F^{-n}(M)))$ is a decreasing intersection of cones thus it has a non-empty intersection. If $\mbf{v}$ and $\mbf{w}$ are two normalized vectors in this intersection, we consider two renormalized pre-images $\mbf{v}_{n}$ and $\mbf{w}_{n}$ by $DF^{-n}(M)$. 
We get 
$$|\det DF^{n}|=\frac{|\sin(\measuredangle(\mbf{v},\mbf{w}))|}{|\sin(\measuredangle(\mbf{v}_{n},\mbf{w}_{n}))|}||DF^{n}(\mbf{v}_{n})||.||DF^{n}(\mbf{w}_{n})||.$$

We point out that inequalities \eqref{equ-cbeta-lambda} and \eqref{equ-lambda-sigma}  show that $|\det DF|<1$ everywhere. 
Then 
$$|\sin(\measuredangle(\mbf{v},\mbf{w}))|\le \frac{|\det DF^{n}|}{||DF^{n}(\mbf{v}_{n})||.||DF^{n}(\mbf{w}_{n})||}$$
and Proposition \ref{prop-expanEupartielle} shows that $\measuredangle(\mbf{v},\mbf{w})$ decreases exponentially with  $n$ as $n\to+\8$.  
\end{proof}

\begin{remark}
\label{rem-Eu passe}
Note that $E^{u}(M)$ is actually well defined for every point whose full backward orbit is well defined. 
$\blacksquare$\end{remark}

%%%%%%%%%%%%%%%%%%%
\subsection{Stable direction}
In this subsection we prove the existence of the stable direction and the exponential contraction of vectors in that direction.

The  stable cone $\mathsf{C}^{s}(M)$ is defined  as the closure of the complement (in $\R^{2}$) of the unstable cone. It is defined in the same domain as the unstable conefield.  It satisfies 
$$DF^{-1}(\mathsf{C}^{s}(M))\subset\mathsf{C}^{s}(F^{-1}(M))$$
provided $F^{-1}(M)$ belongs to $\CQ$.

\begin{proposition}
\label{prop-expand-es}
For every $\disp M\in\bigcap_{n\ge 0}F^{-n}(\CQ)\setminus\CO(\xi)$, $\disp\bigcap_{n\ge 0}DF^{-n}(\mathsf{C}^{s}(F^{n}(M)))$ is a single direction, called the stable direction at $M$ and denoted by $E^{s}(M)$. 

Moreover, let $\disp\Delta:=\max |\det DF|$.  Then there exists some universal constant  $C$ such that for every complete piece of orbit $M,\ldots ,F^{n}(M)$, 
$$|DF^{n}_{|E^{s}(M)}||<C.\Delta^{n}\rho^{-\frac{n}5}.$$
\end{proposition}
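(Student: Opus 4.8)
The existence of the stable direction $E^s(M)$ will be obtained exactly as in Proposition~\ref{prop-def-Eu}: the sets $DF^{-n}(\mathsf{C}^s(F^n(M)))$ form a nested decreasing family of cones, so their intersection is nonempty, and to see it is a single line I would again use the area-distortion identity
$$|\det DF^n|=\frac{|\sin(\measuredangle(\mbf v,\mbf w))|}{|\sin(\measuredangle(\mbf v_n,\mbf w_n))|}\,||DF^n(\mbf v_n)||\cdot||DF^n(\mbf w_n)||$$
for two normalized vectors $\mbf v,\mbf w$ in the intersection and their renormalized forward images $\mbf v_n,\mbf w_n$ under $DF^n$. The point is that vectors in the stable cone at $F^n(M)$ pull back under $DF^{-n}$ and hence are \emph{contracted} by $DF^n$ going forward, so $||DF^n(\mbf v_n)||$ and $||DF^n(\mbf w_n)||$ are small; combined with $|\det DF^n|\le\Delta^n$ this forces $\measuredangle(\mbf v,\mbf w)\to 0$. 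The only subtlety is that the forward orbit of $M$ need not be complete at every scale; I would handle this by noting that an orbit decomposes into critical tubes and free iterates (Figure~\ref{fig-tubes}), and a Pesin-type statement analogous to Proposition~\ref{prop-expanEupartielle} — now applied to the forward orbit and the stable cone — gives the contraction up to a multiplicative constant depending on $M$, which is enough for the angle to collapse.

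For the quantitative contraction estimate along a complete piece of orbit, the natural route is to go through the Jacobian rather than estimate $DF^n_{|E^s}$ directly. Pick a unit vector $\mbf u\in E^u(M)$ and a unit vector $\mbf s\in E^s(M)$. Since $E^u$ and $E^s$ are genuinely transverse (one sits in the unstable cone, the other in its closed complement), the angle $\measuredangle(E^u,E^s)$ is bounded below by a universal constant along the orbit — the cones are honest cones with nonzero opening — so there is a universal $C_0$ with
$$|\det DF^n(M)|\ge C_0^{-1}\,||DF^n(M)\mbf u||\cdot||DF^n(M)\mbf s||.$$
By Proposition~\ref{prop-expaEucomplete} (expansion along complete pieces) we have $||DF^n(M)\mbf u||\ge\rho^{n/5}$, and $|\det DF^n(M)|\le\Delta^n$. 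Rearranging gives $||DF^n(M)\mbf s||\le C_0\,\Delta^n\rho^{-n/5}$, which is the claim with $C=C_0$.

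The main obstacle, as usual in this construction, is uniformity of the transversality angle between $E^u$ and $E^s$ along the whole orbit, including inside the critical tubes where the map is genuinely nonlinear and the unstable cone slope blows up like $A(3y^2+x)$. One has to check that even as the unstable cone becomes very ``vertical'' near $\xi$, the stable cone (its closed complement) keeps a definite opening and the two directions stay uniformly apart — this follows from the fact that the unstable cone slope is bounded \emph{below} by $\frac{Ad}{3c\bm}>1$ everywhere outside the critical tube and the geometry inside the tube is controlled by conditions~\eqref{condi-suff}, but it needs to be spelled out. Once that lower bound on $\measuredangle(E^u,E^s)$ is in hand, the rest is the elementary determinant bookkeeping above; the constant $C$ is universal precisely because $d\approx\frac12$, $A$, $c\bm$ are pinned down by the standing conditions and do not degenerate as the other parameters are pushed to their limits.
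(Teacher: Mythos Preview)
Your approach is close in spirit to the paper's, but there is a real gap at the crucial step: the claim that $\measuredangle(E^u,E^s)$ is bounded below by a universal constant \emph{along the whole orbit} is false. In the post-critical region, at a point $F(\xi+(x,y))$, the unstable cone is $\{|v_y|\ge A(3y^2+x)|v_x|\}$ and the stable cone is its closed complement $\{|v_y|\le A(3y^2+x)|v_x|\}$. As $(x,y)\to(0,0)$ the threshold $A(3y^2+x)$ tends to zero, so \emph{both} cones collapse onto the horizontal axis: $E^s$ has slope at most $A(3y^2+x)$, while $E^u$ --- lying in $DF(\xi+(x,y))\cdot\mathsf{C}^u(\xi+(x,y))$ --- has slope of order $c(3y^2+x)/\eps_1=8A(3y^2+x)$ by \eqref{condi-majo Abetamax}. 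Both directions become horizontal and the angle between them vanishes. Nothing in conditions~\eqref{condi-suff} prevents this; it is intrinsic to the tangency, and ``spelling it out'' will not produce a uniform bound.

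The paper circumvents this by applying the determinant comparison only at times $n$ for which $F^{n+1}(M)$ is a \emph{free moment}, i.e.\ outside every critical tube. There the unstable cone has slope at least $\frac{Ad}{3c\bm}$ and, more to the point, the paper checks (via \eqref{equ-conunstsortie}) that at the exit of a critical tube $DF(\mathsf{C}^u)$ lands uniformly \emph{strictly} inside $\mathsf{C}^u$; hence any vector in $\bigcap DF^{-n}(\mathsf{C}^s(F^n(M)))$ is uniformly separated from vectors in the pushed-forward unstable cone at those free times. This is what furnishes the universal lower bound on the sine and gives \eqref{equ1-contraCs}. Uniqueness of $E^s$ is then obtained not by comparing two stable vectors through the determinant identity --- your route, which is circular because it presupposes the very contraction you are trying to establish (vectors merely in $\mathsf{C}^s$ need not contract under $DF$) --- but by writing $\mbf v'=\mbf v+\mbf w$ with $\mbf w$ in the unstable cone and reaching a contradiction from $DF^{n}\mbf v$, $DF^{n}\mbf v'$ small while $DF^{n}\mbf w$ is exponentially large.
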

\begin{proof}
If there is $n_0$ such that $F^{n}(M)$ does not belong to the critical zone $\forall n>n_0$, then the result follows as in the hyperbolic case,  for $F^{n_0}(M)$, and the unique stable direction for this point is iterated back to define the stable direction for $M$. 

For the rest of the proof we assume that the forward orbit of $M$ visits infinitely many times the critical zone. 
We can thus split this forward orbit in critical tubes and free moments. 
Note that we have seen in the proof of Proposition \ref{prop-expansion}  that there necessarily exists at least one free moment between two consecutive critical tubes.  Then, we can always assume that $M$ belongs to the end of a critical tube and that $F(M)$ belongs to a free moment.

\medskip
We claim that $DF(M)\mathsf{C}^{u}(M)$ is uniformly far from the border of $\mathsf{C}^{u}(F(M))$. 
Actually, at the end of a critical tube Inequality \eqref{equ-conunstsortie} shows that the unstable cone is strictly inside the cones defined by $\disp\{\mathbf{v}=(v_{x},v_{y}),\ |v_{y}|\ge\frac{Ad}{3c\beta_{max}}|v_{x}|\}$. Then its image by the linear map $DF(M)$ is strictly inside $\disp \mathsf{C}^{u}(F(M))=\{\mathbf{v}=(v_{x},v_{y}),\ |v_{y}|\ge\frac{Ad}{3c\beta_{max}}|v_{x}|\}$ and uniformly far from its border.

This shows that if $F^{k}(M)$ is in a free moment, then $\forall\mbf v\in\disp\bigcap_{n\ge 0}DF^{-n}(\mathsf{C}^{s}(F^{n+k}(M)))$ and $\forall\mbf w\in DF.\mathsf{C}^{u}(F^{k-1}(M))$, 
the angle between $\mbf v$ and $\mbf w$ is uniformly bounded from below away from zero. So are they sinus and we call $C$ any positive lower bound. 

Then, consider $\mbf v$ in $\disp\bigcap_{n\ge 0}DF^{-n}(\mathsf{C}^{s}(F^{n+1}(M)))$  and $\mbf w$ in $DF.\mathsf{C}^{u}(F(M))$. Let $\mbf{v}_{n+1}$ and $\mbf{w}_{n+1}$ be  normalized and collinear respectively to $DF^{n+1}(M).\mbf{v}$ and $DF^{n+1}(M).\mbf{w}$.
Again we use the equality 
$$|\det DF^{n}|=\frac{|\sin(\measuredangle(\mbf{v}_{n+1},\mbf{w}_{n+1})|}{|\sin(\measuredangle(\mbf{v},\mbf{w})|}||DF^{n+1}(M)(\mbf{v})||.||DF^{n+1}(M).\mbf{w}||.$$
The left hand side term in exponentially small, $\disp||DF^{n+1}(M).\mbf{w}||$ is exponentially big (due to Proposition \ref{prop-expanEupartielle}), and the two sinus are bounded above by one and below away from zero if $n$ is chosen such that $F^{n+1}(M)$ belongs to a free moment (which happens for infinitely many integers $n$). 

Therefore, for every $n$ and $\mbf v$ as above 
\begin{equation}
\label{equ1-contraCs}
||DF^{n+1}(M)(v)||\le C\Delta^{n}\rho^{-\frac{n}5}||v||,
\end{equation}

We claim that Inequality \eqref{equ1-contraCs} shows that $\disp\bigcap_{n\ge 0}DF^{-n}(\mathsf{C}^{s}(F^{n+1}(M)))$ is a single vector. Otherwise, let us pick $\mbf v$ and $\mbf v'$ in $\disp\bigcap_{n\ge 0}DF^{-n}(\mathsf{C}^{s}(F^{n+1}(M)))$ and adjust them such that $\mbf v'=\mbf v+\mbf w$, with $\mbf w$ in $DF.\mathsf{C}^{u}(F(M))$. Pick $n$ as above. We get 
$$D^{n+1}(F).\mbf v'=DF^{n+1}(M).\mbf v+DF^{n+1}(M).\mbf w.$$
Two of these terms are exponentially small and the last one is exponentially big. This is impossible. 
\end{proof}

\begin{corollary}
\label{cor-map-hyperbolic}
Every ergodic $F$-invariant ergodic measure is hyperbolic: it has one (unstable)
positive  Lyapunov exponent $\lu\ge \frac15\log\rho$ and one (stable) negative
Lyapunov exponent $\ls\le\log\Delta -\lu$.
\end{corollary}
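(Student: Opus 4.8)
The plan is to deduce the corollary from the two preceding propositions—Proposition \ref{prop-expansion} (and its reformulations \ref{prop-expaEucomplete}, \ref{prop-expanEupartielle}) for the unstable direction, and Proposition \ref{prop-expand-es} for the stable direction—by feeding them into the Oseledets/Birkhoff machinery. First I would fix an ergodic $F$-invariant probability measure $\mu$. Since the critical orbit $\CO(\xi)$ is a single orbit, either $\mu$ is the atomic measure equidistributed on a periodic orbit contained in $\CO(\xi)$ (but $\CO(\xi)$ is not periodic, since $F^{-n}(\xi)\to(0,0)$ without $\xi$ being fixed, so this case is empty), or $\mu(\CO(\xi))=0$. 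Hence $\mu$-a.e.\ point $M$ lies in $\Lambda\setminus\CO(\xi)$, and both $E^{u}(M)$ and $E^{s}(M)$ are defined there; by their construction via intersections of $DF$-images of the invariant cone fields, the splitting $T_M\R^2=E^u(M)\oplus E^s(M)$ is $DF$-invariant and measurable.

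Next I would control the growth of $\log\|DF^n_{|E^u(M)}\|$ and $\log\|DF^n_{|E^s(M)}\|$ along orbits and apply the subadditive (Kingman) ergodic theorem, or more simply Birkhoff plus Oseledets in dimension two. For the unstable exponent: by Proposition \ref{prop-expanEupartielle}, for $\mu$-a.e.\ $M$ and all large $n$ (taking $n$ along the infinitely many times at which the orbit segment starting just after the critical tube containing $M$ is complete) one has $\|DF^{n}(M)\mbf v\|_\8\ge l^{+}(M)\rho^{n/5}\|\mbf v\|_\8$ for $\mbf v\in E^u(M)$; dividing by $n$ and letting $n\to\8$ gives $\lu:=\lim\frac1n\log\|DF^n_{|E^u(M)}\|\ge\frac15\log\rho$ (the limit exists a.e.\ by Oseledets since the cocycle $\log^+\|DF^{\pm1}\|$ is bounded, hence integrable, as $F$ is $C^1$ on a compact set). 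This establishes the positive exponent bound. For the stable exponent I would invoke Proposition \ref{prop-expand-es}: along complete orbit segments $\|DF^n_{|E^s(M)}\|< C\Delta^n\rho^{-n/5}\|\cdot\|$, whence $\ls\le\log\Delta-\frac15\log\rho$. To sharpen this to $\ls\le\log\Delta-\lu$ as stated, I would use the multiplicativity of the determinant: $\det DF^n_{|T_M} = DF^n_{|E^u}\cdot DF^n_{|E^s}$ up to the bounded factor $\sin\measuredangle(E^u,E^s)$ (which is bounded away from $0$ and $\8$ along a positive-density set of times by the uniform transversality argument already used in the proof of Proposition \ref{prop-expand-es}), so $\frac1n\log|\det DF^n|\to\lu+\ls$; since $|\det DF|\le\Delta$ everywhere, $\lu+\ls\le\log\Delta$, i.e.\ $\ls\le\log\Delta-\lu$, and in particular $\ls<0$ because $\lu\ge\frac15\log\rho>0$ while $\Delta\le\max|\det DF|<1$ (the latter observed in the proof of Proposition \ref{prop-def-Eu}).

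The main obstacle is the bookkeeping needed to pass from the ``complete-orbit-segment'' estimates of Propositions \ref{prop-expaEucomplete}--\ref{prop-expand-es}, which hold only along times at which the orbit decomposes cleanly into critical tubes plus free moments, to a genuine Lyapunov exponent valid for the limit over all $n$. Concretely, one must check that for $\mu$-a.e.\ $M$ the set of ``good'' times (ends of critical tubes, or more precisely times at which the preceding segment is complete) has density one, or at least that the multiplicative error incurred on the incomplete tail is subexponential. This follows because a single critical tube has bounded length contribution to the exponent distortion (the constants $l^{\pm}(M)$ depend only on finitely many iterates, cf.\ the locality statement at the end of Proposition \ref{prop-expanEupartielle}) and because, by Poincar\'e recurrence applied to $\mu$ and the critical zone, the gaps between successive critical-tube entrances are $\mu$-a.e.\ finite, so the Birkhoff averages of any integrable observable measuring ``time since last completed tube'' are finite a.e. Once this is in place, the bounds $\lu\ge\frac15\log\rho$ and $\ls\le\log\Delta-\lu$ follow directly, and since both exponents are then nonzero (one strictly positive, one strictly negative), $\mu$ is hyperbolic.
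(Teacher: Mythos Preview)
Your proposal is correct and follows essentially the same route as the paper: the unstable exponent bound comes from the expansion estimates in the unstable cone field, and the stable bound comes from the determinant relation $\lu+\ls=\lim_n\frac1n\log|\det DF^n(M)|\le\log\Delta$. The paper's own proof is extremely terse---it simply asserts hyperbolicity as ``clear'' and writes the determinant identity---so your version is a faithful elaboration of what the authors leave implicit.

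One remark: the ``main obstacle'' you raise (passing from complete-orbit-segment estimates to the full Lyapunov limit) is not actually an obstacle here. Proposition~\ref{prop-expansion} already gives the pointwise bound $\liminf_{n}\frac1n\log\|DF^n(M)\mbf v\|_\8\ge\frac15\log\rho$ for \emph{every} $M\in\Lambda\setminus\CO(\xi)$ and every nonzero $\mbf v\in\mathsf{C}^u(M)$, with the $\liminf$ taken over all $n$, not just complete times; the splitting into tubes and free moments is already absorbed into that proof. So you can cite Proposition~\ref{prop-expansion} directly and skip Propositions~\ref{prop-expaEucomplete}--\ref{prop-expanEupartielle} and the density-of-good-times discussion entirely. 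The determinant step then finishes the argument exactly as you wrote it.
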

\begin{proof}
If $\mu$ is any ergodic $F$-invariant measure, it is clearly hyperbolic. Moreover 
$$\ls+\lu=\lim_{\ninf}\frac1n\log|DF^{n}(M)|$$
holds $\mu$-a.e, which yields $\ls\le \log\Delta-\lu$. 
\end{proof}

%\begin{remark}
%\label{rem-Es-future}
%We want to emphasize that the same proof of Proposition \ref{prop-expand-es} allows to define $E^{s}(M)$ for every $M$ in $\disp\bigskip_{n\ge 0} F^{-n}(\CQ)$. For these points $E^{u}(M)$ may not be well-defined but $\mathsf{C}^{u}(M)$ is well-defined. Vectors in this cone are expanded and we can 
%
%$\blacksquare$\end{remark}

%%%%%%%%%%%%%%%%%%%%%%%%%%%%%%%%%%%%%%
%%%%%%%%%%%%%%%%%%%%%%%%%%%%%%%%%%%%%%
%%%%%%%%%%%%%%%%%%%%%%%%%%%%%%%%%%%%%%
\section{Local stable and unstable manifolds}\label{sec-mani}
The goal of this subsection is to construct the local unstable and stable
manifolds. 
The construction has three steps. In the first step we construct the candidate to be the local unstable manifold (see Proposition \ref{prop-var-uns-loc}). In the second step we construct the candidate to be the local stable manifold (see Proposition \ref{prop-varie-stab}). In the third subsection, we prove that they are indeed the local unstable and stable submanifolds. Actually, the main obstacle for the construction is that we have to consider critical tubes in their entire part. This makes more complicated to show that points in the (un)stable manifold are characterized by the fact that the respective distance go to zero  along forward (or backward) orbits.

%%%%%%%%%%%%%%%%%%%%%%%%%%%%%%%%%%
%%%%%%%%%%%%%%%%%%%%%%%%%%%%%%
\subsection{The invariant family of local unstable graphs}
Here we construct an invariant family of local unstable graphs for points which return infinitely many times in $\mr$ by iterations of $F^{-1}$. For other points, their backward orbit eventually stays in the uniformly hyperbolic zone, hence the construction of unstable manifolds is standard. 

We consider in the critical zone a family of vertical curves. Using the local
coordinates, each curve is going to be written as 
$$x=x(y).$$
We define  the family $\CV$ of curves satisfying
\begin{equation}\label{equ-def-cv}
 \left\{\begin{array}{l}
1-\text{ it is a curve joining the top and bottom sides of $\mr$,}\\
2-\quad|x'(y)|\le \disp\frac{1}{6\beta_{max}}(3y^{2}+x(y)),\\
3-\quad|x''(y)|\le D,
\end{array}
\right.
\end{equation}
where $D$ is a fixed positive real number. 

\begin{lemma}
 \label{lem-relvertcurv}
For an open set of parameters and for any
sufficiently big $D$,  the set of graphs $\CV$ is stable by the first return map in the critical zone. 
\end{lemma}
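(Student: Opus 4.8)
The plan is to take a curve $\gamma\in\CV$ inside $\mr$, follow its orbit under $F$ until the first return to $\mr$, and check that the three defining conditions of $\CV$ are restored. By Corollary \ref{cor-tempo-volta} the first return time is some $k_0+1\ge k_c+1$, and the orbit decomposes as: one application of the cubic map $\CF$ carrying $\gamma$ out of the critical region into the post-critical region; then $k_0-1$ iterates of the linear map $F=F_0$ in the uniformly hyperbolic zone; then one final application of the affine-part of $DF$ at the entry point $\xi+(\alpha,\beta)$ back into $\mr$. So the computation mirrors the one in Lemma \ref{lem-cone-instable}, but now tracking a curve rather than a tangent vector.

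First I would compute the image of $\gamma$ under $F$. Writing $\gamma$ as $x=x(y)$ with $\xi+(x(y),y)\in\mr$, its image is the cubic curve $F(\xi)+\CF(x(y),y)=F(\xi)+(-\eps_1 y,\,bx(y)-cy(y^2+x(y)))$, which I reparametrize by the new vertical coordinate $Y=-\eps_1 y$. Condition (2) for $\gamma$ says precisely that the tangent direction lies in the unstable cone $\mathsf C^u$ at each point (compare the slope bound $\frac{1}{6\bm}(3y^2+x)$ with the cone slope $A(3y^2+x)$ — here one uses $A=\frac{c}{8\eps_1}$ and the relation $b=2c\bm$, $1=c\bm^2$ to see the curve stays well inside the cone, away from its boundary by a definite factor). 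Then $DF_0$ on the next $k_0-1$ linear iterates only makes the image steeper (it expands verticals by $\s$ or $\rho$ and contracts horizontals by $\lambda$), so the image stays in $\mathsf C^u$; and the final affine step at $\xi+(\alpha,\beta)$ is handled exactly as in Lemma \ref{lem-cone-instable}, using conditions \eqref{condi-suff} and in particular the estimates \eqref{equ6-cone-instable} and $\s^{k_0^{(1)}}\rho^{k_0^{(2)}}(3y^2+x)>\frac{d}{3c\bm}$ to show the returning curve satisfies the slope bound $|x'(y)|\le\frac{1}{6\bm}(3y^2+x(y))$. That the returning curve joins the top and bottom of $\mr$ is condition \eqref{equ4-betamax-genen+1k} (the two lateral vertical segments of $F(\mr)$ are torn apart by the horizontal strip $H_0^{k_c}(F_0(\xi))$), combined with Remark \ref{rem-rectangle=gg0}.

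The bulk of the work — and the main obstacle — is condition (3), the uniform bound $|x''(y)|\le D$ on the second derivative. For a single cubic-map step one computes $x''$ of the image curve explicitly in terms of $x(y)$, $x'(y)$, $x''(y)$ of $\gamma$ and the parameters $b,c,\eps_1$; this picks up terms from the nonlinearity of $\CF$ (second derivatives of the cubic $bx-cy(y^2+x)$) plus the contribution of $x''(y)$ itself divided by powers coming from the reparametrization. During the $k_0-1$ linear iterates, a straightforward chain-rule computation shows the second derivative of a graph $x=x(Y)$ gets multiplied by $\lambda^{k_0-1}/(\s^{k_0^{(1)}}\rho^{k_0^{(2)}})^2$, i.e. it is contracted super-exponentially because $\lambda\s<1$ and $\lambda\rho<1$ by \eqref{equ-lambda-sigma} — in fact one gets far more contraction than needed. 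The delicate point is to control the curvature created precisely at the two ends (the cubic step and the final affine step), where the expansion $\s^{k_0^{(1)}}\rho^{k_0^{(2)}}$ is large; here one must use the lower bounds on that product coming from the critical-tube inequalities ($\s^{k_0^{(1)}}\rho^{k_0^{(2)}}\alpha>d$ etc.) to absorb the newly created curvature. The upshot is an estimate of the form $|x''_{\text{new}}(y)|\le \theta\, D + K$ for some $\theta<1$ and a constant $K$ depending only on the parameters but not on $D$; choosing $D$ large enough that $\theta D+K\le D$ closes the induction and proves $\CV$ is stable under the first-return map. The phrase ``for an open set of parameters'' in the statement is then exactly the observation (as in Lemma \ref{lem-condi-openset} and Remark \ref{rem-choice parameters}) that all the inequalities used are open conditions and are simultaneously satisfiable, e.g. by increasing $c$ with $c\bm^2$ fixed and decreasing $\eps_1$.
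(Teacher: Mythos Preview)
Your overall strategy matches the paper's, and your treatment of condition~(3) --- an affine bound $|x''_{\rm new}|\le \theta D+K$ with contraction factor $\theta<1$, closed by choosing $D$ large --- is exactly what the paper obtains (see the discussion around \eqref{equ-majo-curc-return}).

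There is, however, a genuine error in your decomposition of the first-return map. If $M=\xi+(x(y),y)\in\mr$ has first return at time $n+1$, the composition is \emph{one} nonlinear cubic step (at $M$, sending the curve to the post-critical zone) followed by $n$ \emph{linear} steps landing at $\xi+(\alpha,\beta)\in\mr$. There is no nonlinear step at $\xi+(\alpha,\beta)$ in this return --- that step is the beginning of the \emph{next} return. Your phrase ``one final application of the affine-part of $DF$ at the entry point $\xi+(\alpha,\beta)$ \ldots\ handled exactly as in Lemma~\ref{lem-cone-instable}'' imports the wrong structure: in Lemma~\ref{lem-cone-instable} the iteration runs from the post-critical zone back to itself with the nonlinear step at the \emph{end}, whereas here it runs from $\mr$ to $\mr$ with the nonlinear step at the \emph{beginning}. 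The paper accordingly writes the return curve directly as
\[
y\ \longmapsto\ \bigl(-\lambda^{n}\eps_{1}y,\ \sigma^{k_{1}}\rho^{k_{2}}\bigl(bx(y)-cy(y^{2}+x(y))\bigr)\bigr),
\]
inverts via $(\varphi^{-1})'=1/\varphi'$ and $(\varphi^{-1})''=-\varphi''/(\varphi')^{3}$, and then invokes \eqref{equ-majotermesmoitie} purely as a numerical inequality to bound the resulting derivative --- not as a separate dynamical step applied at $(\alpha,\beta)$.

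A related point: condition~(2) is not ``precisely'' the unstable cone condition. The post-critical cone $\mathsf{C}^{u}(F(\xi+(x,y)))$ has aperture $1/(A(3y^{2}+x))$ with $3y^{2}+x$ in the denominator, while condition~(2) has it in the numerator; and the cone at $\xi+(x,y)\in\mr$ itself is far narrower than condition~(2). What \emph{is} true is that condition~(2) on $\gamma$ forces the tangent of $F(\gamma)$ into the post-critical cone (since $|(b-cy)x'(y)|\le \frac{c}{2}(3y^{2}+x)$), but the paper does not route the argument through cone invariance: it verifies condition~(2) for the return curve by the direct estimate \eqref{equ-control-deriv-return}, obtained from \eqref{equ-deriv-return} and \eqref{equ-majotermesmoitie}.
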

\begin{proof}

Let us consider  $M\in \mr$ be such that its first return into $\mr$ occurs for the $(n+1)^{th}$-iterate. Let $\CC$ be a curve in $\CV$ containing $M$, consider its local parametrization $(x(y),y)$ where $y$ runs $[-\bm,\bm]$ 
and say 
$M=(x(y_{0}),y_{0})$. We want to prove that  the connected component of $F^{n+1}(\CC)\cap \mr$ which contains $F^{n}(M)$ is again in $\CV$.  

The image of the curve  by $F$ is the set
$$\{(-\eps_{1}y,bx(y)-cy(y^{2}+x(y))): y\in [-\bm,\bm]\}.$$
It is a graph over the horizontal interval $[-\eps_1\bm,\eps_1\bm]$, joining the vertical interval, image by $F$ of the bottom side of $\mr$, to the vertical interval, image by   $F$ of the top side of $\mr$. By construction, this graph crosses the rectangle $G_0^{n_c+2,k_c}(F(\xi))$ (see Figure \ref{fig-ima-cv})

\begin{figure}
\begin{center}
\includegraphics[scale=0.6]{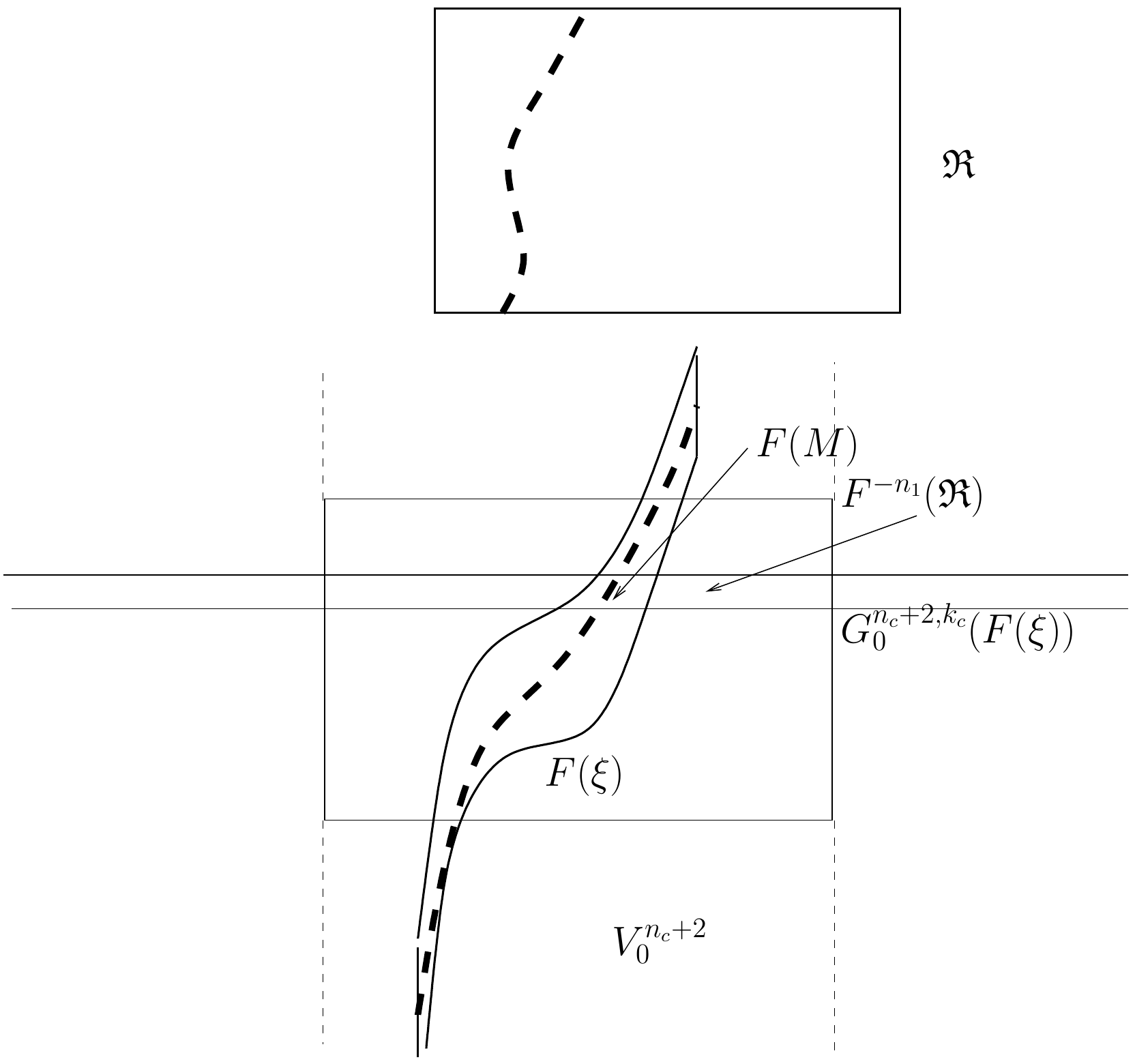}
 \caption{\label{fig-ima-cv}Image of relatively vertical curves}
 \end{center}
\end{figure}

After $n$ iterates of the linear map $F$, this graph is sent to 

$$\{(-\lambda^{n}\eps_{1}y,\s^{k_{1}}\rho^{k_{2}}\left(bx(y)-cy(y^{2}+x(y))\right)):y\in [-\bm,\bm]\}.$$
In particular, it is a curve joining the bottom to the top sides of $\mr$.

By definition, we are considering a curve in the critical zone defined by this
parametrization. 
We set $t=-\lambda^{n}\eps_{1}y$, then $y=-\frac{t}{\lambda^{n}\eps_{1}}$. We use
this new parametrization to compute the derivative and the curvature of this
curve. We want the first coordinated in function of the second but here we have the contrary. 
We recall the formulas for the derivative of an inverse map:
$$\left(\varphi^{-1}\right)'(\varphi(t))=\frac1{\varphi'(t)},\quad
\left(\varphi^{-1}\right)''(\varphi(t))=-\frac{\varphi''(t)}{
\left(\varphi'(t)\right)^{3}}.$$
setting $\varphi(t):=\disp\s^{k_{1}}\rho^{k_{2}}\left(bx(y)-cy(y^{2}+x(y))\right)$, we get 
\begin{eqnarray}
\varphi'(t)&=&-\frac{\s^{k_{1}}\rho^{k_{2}}}{\lambda^{n}\eps_{1}}\left[(b-cy)x'(y)-c(3y^{2}
+x(y))\right]\nonumber\\
\left(\varphi^{-1}\right)'(\varphi(t))&=&\frac{-\lambda^{n}\eps_{1}}{\s^{k_{1}}\rho^{k_{2}}
\left((b-cy)x'(y)-c(3y^{2}+x(y))\right)}\label{equ-deriv-return}\\
\varphi''(t)&=&\frac{-\s^{k_{1}}\rho^{k_{2}}}{\lambda^{n}\eps_{1}}\left[\frac{2c}{\lambda^{n}
\eps_{1}}x'(y)-(b-cy)\frac{x''(y)}{\lambda^{n}\eps_{1}}+\frac{6c}{\lambda^{n}
\eps_{1}}y\right],\nonumber\\
\left(\varphi^{-1}\right)''(\varphi(t))&=&\frac{\s^{k_{1}}\rho^{k_{2}}}{\left(\lambda^{n}\eps_{1
}\right)^{2}}\left[-(b-cy)x''(y)+6cy+2cx'(y)\right]\times\nonumber\\
& & \frac{\left(\lambda^{n}\eps_{1
}\right)^{3}}{\s^{3k_{1}}\rho^{3k_{2}}\left((b-cy)x'(y)-c(3y^{2}+x(y))\right)^{3}}.
\label{equ-curva-return}
\end{eqnarray}
To prove that the curve belongs to the family $\CV$, it remains to check that the
derivative and the curvature respectively satisfy properties $2$ and $3$  in \eqref{equ-def-cv}. As before, we use the local coordinates $(\alpha,\beta)$ for a return
into the critical zone.

\smallskip
$\bullet$ Concerning property $2$ (which deals with the first derivative), 
note that  $\disp (b-cy)x'(y)\le
3c\beta_{max}.\frac{c}{6c\beta_{max}}(3y^{2}+x(y))$. Then, due to
\eqref{equ-deriv-return}, it is sufficient to
check that 
\begin{equation}
\label{equ-control-deriv-return}
\frac{2\lambda^{n}\eps_{1}}{\s^{k_{1}}\rho^{k_{2}}\left(c(3y^{2}+x(y))\right)}\le
\frac{1}{6\beta_{max}}(3\beta^{2}+\alpha)
\end{equation}
holds. Now \eqref{equ-majotermesmoitie} yields 
$\disp \frac{\lambda^{n}c\beta_{max}}{\s^{k_{1}}\rho^{k_{2}}A(3y^{2}+x(y))}\le
\frac{c}2(3\beta^{2}+\alpha)$. We recall Equality \eqref{condi-majo Abetamax} :
$A=\disp\frac{c}{8\eps_{1}}$. 
Thus, 
\begin{eqnarray*}
 \frac{2\lambda^{n}\eps_{1}}{\s^{k_{1}}\rho^{k_{2}}\left(c(3y^{2}+x(y))\right)}&=&\frac{\lambda^{n}c\bm}{\s^{k_{1}}\rho^{k_{2}}A(3y^{2}+x(y))}\frac{2A\eps_1}{c^2\bm}\\
&<&\frac{c}{2}(3\beta^2+\alpha)\frac{2A\eps_1}{c^2\bm}=\frac18(3\beta^2+\alpha),
\end{eqnarray*}
and \eqref{equ-control-deriv-return}
holds. 

\medskip
$\bullet$ Now, let us focus on the control of the curvature. 
To satisfy the condition $3$ on the curvature, \eqref{equ-curva-return} shows it
is sufficient to get 
\begin{equation}
\label{equ-majo-curc-return}
\frac{8\lambda^{n}\eps_{1}}{\s^{2k_{1}}\rho^{2k_{2}}c^{3}(3y^{2}+x(y))^{3}}\left[3c\beta_{max}
D+6c\beta_{max}+\frac{c^{2}}{3c\beta_{max}}(y^{2}+x(y))\right]\le D.
\end{equation}
Inequalities \eqref{equ-lambda-sigma} yield $\disp \lambda<\frac1\s$ and
$\disp\lambda<\frac1{\rho}$, and then it is sufficient to get 
$$\frac{8\eps_{1}}{\s^{3k_{1}}\rho^{3k_{2}}c^{3}(3y^{2}+x(y))^{3}}\left[3c\beta_{max}D+6c\beta_{
max}+\frac{c^{2}}{3c\beta_{max}}(y^{2}+x(y))\right]\le D.$$
Inequality \eqref{equ7-cone-instable} shows that it is sufficient to get
$$\frac{8(3c\beta_{max})^{3}\eps_{1}}{c^{3}d^{3}}\left[3c\beta_{max}D+6c\beta_{
max}+\frac{c^{2}}{3c\beta_{max}}(\beta_{max}^{2}+\alpha_{max})\right]\le D.$$
Now, conditions stated before on the parameters means that the term in the left
hand side  is an affine term in $D$ with slope $\disp \frac{2^3.3^4}{d^3}\bm^2\eps_1$.  As $d$ is very close to $\frac12$, this slope is strictly smaller than 1 if $\bm$ and/or $\eps_1$ are sufficiently small, which is allowed. 
We assume that this holds. Then, 
Depending of the choices of the parameters, there exists $D_{0}$ such that for
$D>D_{0}$, the term in the left hand side is smaller than the one in the right
hand side. 
This shows that \eqref{equ-majo-curc-return} holds if $D$ is chosen sufficiently
big. 
\end{proof}

\begin{remark}
\label{rem-u-deriseconde}
Choosing $\be_{max}$ and $\eps_{1}$ sufficiently small $$D_{0}:=\frac{2^{3}.3.7(19\be_{max}^{2}+\al_{max})}{d^{3}}\frac{\eps_{1}}{1-\frac{2^3.3^4}{d^3}\bm^2\eps_1},$$
is as closed as wanted to 0. 
$\blacksquare$\end{remark}

\begin{proposition}
 \label{prop-var-uns-loc}
Let us consider a point $M\in\mr$ returning infinitely
many times in the critical zone by iterations of $F^{-1}$. Let $n_{k}$, $k=1,2,\ldots$ be these return times in $\mr$. Then, for each $k$, there exists a curve $\CC_{k}^{u}$ in $\CV$ containing $F^{-n_{k}(M)}$ and such that for every $k<j$
$$F^{-(n_{j}-n_{k})}(\CC^{u}_{k})\subset \CC^{u}_{j}.$$
\end{proposition}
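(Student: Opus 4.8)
The plan is to build the curves $\CC^u_k$ backwards along the orbit, starting from a ``seed'' vertical curve through the most remote return point $F^{-n_K}(M)$ for large $K$, and then take a limit. More precisely, fix $K$ and let $\CC_0$ be any curve in $\CV$ passing through $F^{-n_K}(M)$ (e.g.\ a vertical segment $x\equiv x(y_0)$, which manifestly satisfies the three conditions in \eqref{equ-def-cv} once $D$ is chosen as in Lemma \ref{lem-relvertcurv}). By Lemma \ref{lem-relvertcurv}, the connected component of the first-return image of $\CC_0$ in $\mr$ containing $F^{-n_{K-1}}(M)$ is again in $\CV$; iterating this $K$ times produces for each $j\le K$ a curve $\CC^{u,K}_j\in\CV$ through $F^{-n_j}(M)$ with $F^{-(n_i-n_j)}(\CC^{u,K}_j)\subset\CC^{u,K}_i$ for $j\le i\le K$ (this last inclusion because the first-return map only shrinks the relevant connected component). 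This gives the nesting property for fixed $K$; the point is to pass to $K\to\infty$.

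For the limit, I would fix $k$ and consider the sequence $(\CC^{u,K}_k)_{K\ge k}$ of curves in $\CV$, all passing through $F^{-n_k}(M)$. The family $\CV$ is precompact in the $C^1$ topology: condition 2 in \eqref{equ-def-cv} gives a uniform bound on $|x'(y)|$ (since $3y^2+x(y)$ is bounded on $\mr$), and condition 3 gives a uniform bound on $|x''(y)|$, so by Arzel\`a--Ascoli a subsequence converges in $C^1$ to a curve $\CC^u_k$, which still satisfies conditions 2 and 3 (the bound on $x''$ survives in the limit as the derivatives converge weakly-$*$, or one argues the bound on $x'$ is Lipschitz with constant $D$) and still joins the top and bottom of $\mr$; hence $\CC^u_k\in\CV$. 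A routine diagonal argument over $k$ produces the curves for all $k$ simultaneously along a common subsequence of $K$'s. Finally the nesting $F^{-(n_j-n_k)}(\CC^u_k)\subset\CC^u_j$ for $k<j$ passes to the limit: for each large $K$ we have $F^{-(n_j-n_k)}(\CC^{u,K}_k)\subset\CC^{u,K}_j$, and since $F^{-(n_j-n_k)}$ is a homeomorphism (a composition of $F^{-1}$, which is well defined here) and $C^1$ convergence is preserved under applying a fixed smooth map, the inclusion holds in the limit, at worst after replacing ``$\subset$'' by ``$\subseteq$'' which is what is claimed.

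There is one bookkeeping point worth isolating: one must make sure that the relevant connected component used at each step is the one containing the orbit point $F^{-n_j}(M)$, and that the choice is consistent as $K$ grows, so that the curves $\CC^{u,K}_k$ for different $K$ are genuinely comparable (they all contain $F^{-n_k}(M)$ and their forward pushes agree on the component through $F^{-n_{k-1}}(M)$, etc.). This is immediate from the fact that $F$ is a diffeomorphism and the first-return construction in Lemma \ref{lem-relvertcurv} is canonical once the base point is specified, but it should be stated. One also uses Corollary \ref{cor-tempo-volta} and the disjointness of critical tubes to know that the return times $n_k$ are well separated and that between consecutive returns the dynamics is the uniformly hyperbolic $F_0$, so no additional curvature blow-up occurs outside $\mr$ (in fact $F_0$ expands verticals linearly, which only improves the estimates).

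The main obstacle I anticipate is not any single step but making the $C^1$-compactness argument genuinely clean: one needs the limiting curve to still satisfy the \emph{strict}-looking derivative bound in condition 2, where the right-hand side $\frac{1}{6\beta_{max}}(3y^2+x(y))$ itself depends on the curve through $x(y)$. Since $C^1$ convergence gives $x^K\to x$ uniformly and $(x^K)'\to x'$ uniformly, the inequality $|(x^K)'(y)|\le\frac{1}{6\beta_{max}}(3y^2+x^K(y))$ passes to the limit without loss, so this is fine, but it must be checked rather than asserted; the curvature bound requires a slightly more careful argument (it is a bound on a difference quotient of $x'$, or one invokes that $\CV$ is closed under the $C^1$ topology because a uniform Lipschitz bound on $x'$ is $C^1$-closed). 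Everything else is soft topology plus Lemma \ref{lem-relvertcurv}.
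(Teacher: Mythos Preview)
Your argument is correct for the proposition as stated, but the route differs from the paper's. You seed with a single vertical segment far in the past, push forward, and extract a limit family via Arzel\`a--Ascoli and a diagonal argument. The paper instead seeds with \emph{both} vertical sides of $\mr$: pushing them forward through the successive returns produces, at each $F^{-n_k}(M)$, an increasing family of left curves and a decreasing family of right curves in $\CV$, with the orbit point sandwiched between them. Monotonicity gives two limit curves $\CC_{\infty,l},\CC_{\infty,r}\in\CV$, and the authors then use a volume argument (any ``bubble'' between the two limits would have positive area whose $F^{-n}$-preimages blow up, contradicting $|\det DF|<1$) to conclude $\CC_{\infty,l}=\CC_{\infty,r}$.

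What each approach buys: your compactness argument is lighter and establishes existence of \emph{some} nested family, which is exactly what the proposition asserts. The paper's squeezing argument proves more, namely that the limit curve is \emph{unique} (independent of the seed); this is what makes $\CC^u_k$ canonical and is implicitly used immediately afterwards when the curve is taken to be \emph{the} local unstable manifold in Definition~\ref{def-wu}. With your construction, different subsequences could a priori yield different families, so if you want to match the paper's subsequent use you would still need a uniqueness step (for which the paper's bubble/volume argument, or an exponential contraction estimate on $\CV$ under the return map, would do).

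Two small points on your write-up. The nesting direction and the bookkeeping of components are fine, as you note. On the regularity of the limit: you correctly flag that $C^1$ convergence only gives $x'$ Lipschitz with constant $D$, not $|x''|\le D$ pointwise; the paper itself glosses over this (``Lemma~\ref{lem-relvertcurv} shows that the limit curves \ldots\ are in $\CV$''), so you are at least as careful as the source. Your appeal to Corollary~\ref{cor-tempo-volta} and the tube structure is not actually needed here---Lemma~\ref{lem-relvertcurv} already packages the full first-return estimate.
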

\begin{proof}
 Let us consider a point $M$ close to the critical value and returning infinitely
many times in the critical zone by iterations of $F^{-1}$. Let $(n_{k})_{k\ge 1}$ be the
sequence of positive integers such that $M_k:=F^{-n_{k}}(M)$ belongs to critical zone (and $n_{0}:=0$).

The two vertical sides of $\mr$ are curves in $\CV$, and the  point $M_k$is between them. Following the proof of Lemma \ref{lem-relvertcurv}, $M_{k-1}$ is between two curves in $\CV$ which are respectively images by $F^{n_k-n_{k-1}}$ of these two vertical segments. Moreover, these two curves are also between the two sides of $\mr$.

Doing this by induction, we get two families of curves, $(\CC_{k,l})_k$  and $(\CC_{k,r})_k$,  images by $F^{n_k}$ of the vertical sides of $\mr$, and such that $\CC_{k,l}$ is always on the left of $M$ and $\CC_{k,r}$ is always on the right of $M$. 

Taking orientation from the left to the right, the family of curves  $(\CC_{k,l})_k$ is an increasing family -- each one has a parametrization $x_{k,l}(y)$, with $x_{k+1,l}>x_{k,l}$ -- and the family $(\CC_{k,r})_k$ is an decreasing family. 

Lemma \ref{lem-relvertcurv} shows that the limit curves $\CC_{\8,l}$ and $\CC_{\8,r}$ are in $\CV$. We prove  by contradiction that the two limit curves are actually equal. 
If not, Lemma \ref{lem-relvertcurv} shows that there must be some ``bubbles'' between
these two curves (see Figure \ref{fig-bubble}). Let us denote such a bubble by $\CB$. This bubble has positive
Lebesgue measure. By construction, for every point  $M'$ in the bubble  and for
every integer $n$, $F^{-n}(M')$ belongs to the ball $B(F^{-n}(M),1)$.

\begin{figure}

\begin{center}

\includegraphics[scale=0.7]{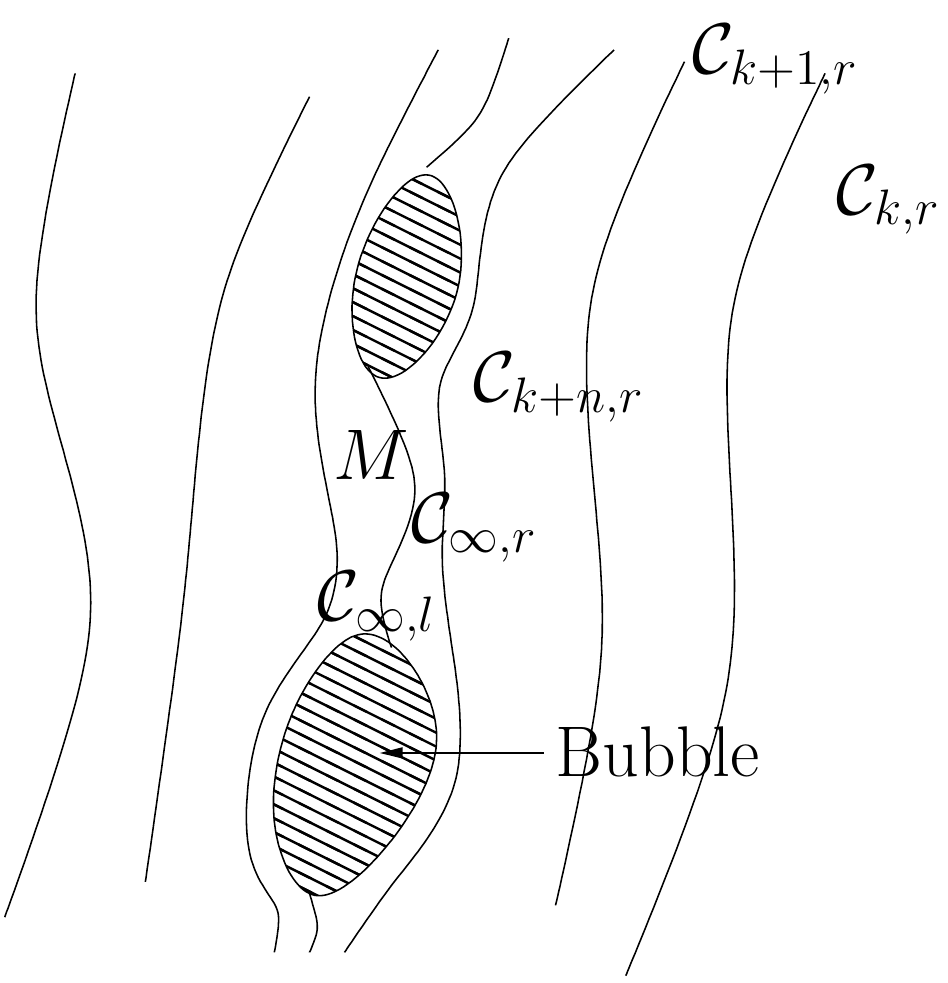}
 \caption{\label{fig-bubble}Family of curves accumulating to unstable manifold.}

\end{center}

\end{figure}

On the other hand, $|\det DF|<1$, then the volume of $F^{-n}(\CB)$ goes to
$+\8$. This produces a contradiction and proves that there cannot be bubbles
between the two curves. In other words, the two curves coincide. This defines the curve $\CC^{u}_{0}$. 

Doing the same for each $M_{k}$, we get a family of curves $\CC^{u}_{k}$ in $\CV$. By construction, if $k<j$, 
$$F^{-(n_{j}-n_{k})}(\CC^{u}_{k})\subset \CC^{u}_{j}.$$
\end{proof}

%%%%%%%%%%%%%%%%%%%%%%%%%%%%%%%%%%%
%%%%%%%%%%%%%%%%%%%%%%%%%%%%%%%%%%%%
\subsection{The invariant family of local stable graphs}

Now, we construct a family of local stable manifold in the post-critical
zone. 
We prove they are uniformly long and relatively horizontal curves. The
construction holds only for points which return (forward) infinitely many often
in the critical zone. For other points, the construction of the stable manifold
is standard and follows from the fact that these points are (forward) uniformly
hyperbolic.

\paragraph{Notation:}
 Let $M$ be a point in $\mr$ whose first return in $\mr$ is $n+1$. We denote by $F_{F(M)}^{-n}(\mr)$ the connected component of $F^{-n}(\mr)\cap \CQ$ which contains $F(M)$. It is a horizontal band in $\CQ$ (note that  shadowing $M$,$F^{-n}$ is linear).

\begin{definition}\label{def-relhoricurv}
 Let $M$ be a point in $\mr$ whose first return in $\mr$ is $n+1$.  A curve containing $F(M)$ is said to be an approximative stable curve if it is a $\CC^{1}$-graph over the whole interval $[0,1]$ in $F_{F(M)}^{-n}(\mr)$ with slope at every point in the stable cone field (when it makes sense).
\end{definition}

\begin{lemma}\label{lem-preim-hori-curv}
Let $M$ be a point in $\mr$. Assume that $n+1$ is its first return time in $\mr$ for $F$. If $\CC$ is an approximative stable curve containing $F^{n+2}(M)$, then the connected component in $\CQ$ of $F^{-n-1}(\CC)$ which contains $F(M)$ is again an approximative stable curve. 
\end{lemma}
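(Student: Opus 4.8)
The plan is to mirror the strategy of Lemma~\ref{lem-relvertcurv}, but now working with the \emph{inverse} first-return map rather than the forward one, and tracking the stable cone condition instead of the quantitative bound~$2$ in~\eqref{equ-def-cv}. First I would fix $M\in\mr$ with first return time $n+1$, take an approximative stable curve $\CC$ through $F^{n+2}(M)$ — so $\CC$ is a $\CC^1$ graph over the full unit interval in $F^{-n}_{F(M)}(\mr)$ with slope in the stable cone field wherever that makes sense — and then pull it back in two stages: first $n$ iterates of $F^{-1}=F_0^{-1}$ (the linear, uniformly hyperbolic part), and then one more application of $F^{-1}$ through the critical region near $M$. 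The stable cone field is invariant under $DF^{-1}$ (this is exactly the property recorded just after the definition of $\mathsf{C}^s$ in the previous section, $DF^{-1}(\mathsf{C}^s(M))\subset \mathsf{C}^s(F^{-1}(M))$), so along the linear stage the graph property and the slope condition are preserved essentially for free, and one only needs to check that the pulled-back curve still stretches all the way across the relevant horizontal band: this follows because $F^{-n}$ restricted to the band shadowing $F(M)$ is linear and maps $F^{-n}_{F(M)}(\mr)$ onto a set crossing $\mr$ by construction, so the image of a full graph is again a full graph over the appropriate interval.

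The substantive step is the final pullback through the critical region, i.e.\ from a curve lying in the image $F(\mr)$ back to a curve in $\mr$ containing $F(M)$. Here I would invert the formula $F(\xi+(x,y))=F(\xi)+(-\eps_1 y,\,bx-cy(y^2+x))$: a point $(-\eps_1 y,\,bx-cy(y^2+x))$ has $y=-\,(\text{first coordinate})/\eps_1$ uniquely determined, and then $x$ is recovered affinely from the second coordinate since $x\mapsto bx-cy(y^2+x)=(b-cy)x-cy^3$ is affine in $x$ with slope $b-cy=2c\bm-cy$, which is bounded away from $0$ on the critical region because $|y|\le\bm$ and $b=2c\bm$ forces $b-cy\ge c\bm>0$. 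So $F^{-1}$ is a well-defined $\CC^1$ (indeed smooth) diffeomorphism on $F(\mr)$, and the pulled-back curve is automatically a $\CC^1$ graph over the correct interval in $\mr$. The only thing left is the slope condition: one must verify that $DF^{-1}$ applied to a vector in the stable cone at a point of $F(\mr)$ lands in the stable cone at the corresponding point of $\mr$. Since $\mathsf{C}^s$ was \emph{defined} as the closure of the complement of $\mathsf{C}^u$, this is equivalent to the contrapositive statement that $DF$ maps the complement of $\mathsf{C}^u$ at $M$ into the complement of $\mathsf{C}^u$ at $F(M)$ — and that is precisely what Proposition~\ref{prop-cone-instable} (together with the $\mr$-case computation in its proof, $DF(M).\mathsf{C}^u(M)\subset\mathsf{C}^u(F(M))$, extended to $\CQ$ in the subsection on extending the conefield to $\CQ\setminus\CO(\xi)$) already gives us, because $DF(M)$ is a linear isomorphism and therefore maps cone-complements to cone-complements exactly when it maps cones to cones.

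I would therefore structure the proof as: (i) set up notation, recall that $F^{-n}$ is linear on the band $F^{-n}_{F(M)}(\mr)$; (ii) pull $\CC$ back by $F^{-n}$, using $DF_0^{-1}$-invariance of $\mathsf{C}^s$ and linearity to conclude the image is still an approximative-type graph spanning the band, now sitting in $F(\mr)$; (iii) invert $F$ explicitly on $F(\mr)$ as above, showing $F^{-1}|_{F(\mr)}$ is a $\CC^1$ diffeomorphism, so the final pullback is a full $\CC^1$ graph over the unit interval in $\mr$ through $F(M)$; (iv) check the slope condition at each point by invoking $DF^{-1}(\mathsf{C}^s)\subset\mathsf{C}^s$ (equivalently Proposition~\ref{prop-cone-instable} read backwards). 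The main obstacle is step~(iii)–(iv): one has to make sure that the stable-cone/unstable-cone duality really is preserved in the relevant domain — the unstable cone near $F(\xi)$ is position-dependent (its slope involves $3y^2+x$), so ``complement of the unstable cone'' is a moving family, and one must be careful that the invariance proven in Proposition~\ref{prop-cone-instable} applies to the particular points of $F(\mr)$ and $\mr$ that arise here, and that no issue occurs at points where the cone field ``does not make sense'' (the critical value $F(\xi)$ itself), which is handled by the parenthetical clause in Definition~\ref{def-relhoricurv}. Everything else is the same bookkeeping as in the proof of Lemma~\ref{lem-relvertcurv}.
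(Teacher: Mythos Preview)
You have the right ingredients (stable-cone invariance under $DF^{-1}$, the explicit inversion of $F$ on $F(\mr)$, bounded slope $\Rightarrow$ graph), but the order in which you apply the two pullback stages is reversed, and this matters.

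Recall that $M\in\mr$ and its first return to $\mr$ is $F^{n+1}(M)\in\mr$; hence $F^{n+2}(M)=F\bigl(F^{n+1}(M)\bigr)\in F(\mr)$. Going backwards from $F^{n+2}(M)$ to $F(M)$, the \emph{first} step $F^{n+2}(M)\to F^{n+1}(M)$ is the critical one (it is the inverse of $F$ restricted to $\mr$), while the remaining $n$ steps $F^{n+1}(M)\to F^n(M)\to\cdots\to F(M)$ are linear, since $F^k(M)\notin\mr$ for $1\le k\le n$. Your scheme (``first $n$ linear iterates, then one critical inverse near $M$'') has this backwards; in particular your step~(ii), ``pull $\CC$ back by $F^{-n}$ \ldots\ now sitting in $F(\mr)$'', cannot be right, because $F^{-n}$ applied near $F^{n+2}(M)$ is not linear on its very first iterate and does not land in $F(\mr)$. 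Relatedly, you misidentified the band containing $\CC$: by Definition~\ref{def-relhoricurv} applied to the point $M':=F^{n+1}(M)$, the curve $\CC$ is a graph over $[0,1]$ inside $F^{-m}_{F(M')}(\mr)$ (where $m+1$ is the first return time of $M'$), not inside $F^{-n}_{F(M)}(\mr)$, which is the \emph{target} band.

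With the correct order the argument is exactly the paper's: since $\CC$ is a full horizontal graph with slope in $\mathsf{C}^s$, its passage through $G_0^{n_c+2,k_c}(F(\xi))$ forces it to cross both bounding cubics of $F(\mr)$; hence $F^{-1}(\CC)\cap\mr$ is a $\CC^1$ curve joining the two vertical sides of $\mr$ with tangent in $\mathsf{C}^s$. Only then does the linear $F^{-n}$ (horizontal expansion by $\lambda^{-n}$, with $n\ge k_c$) stretch this short curve into a full graph over $[0,1]$ inside $F^{-n}_{F(M)}(\mr)$, still with slope in $\mathsf{C}^s$. Your explicit inversion formula and your cone-duality remark are correct and useful, but they belong at the first step, not the last; and the ``spans all of $[0,1]$'' conclusion genuinely requires the linear expansion to come \emph{after} the critical pullback, not before.
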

\begin{proof}
 Assume that the curve has a parametrization which has the form $(\eta,\zeta(\eta))$ in $G_0^{n_{c}+2,k_{c}}(F(\xi))$. 
This curve must intersect the two cubics, image by $F$ of the left hand and right hand sides of $\mr$. This proves that  the connected component of $F^{-1}(\CC)$  which contains $F^{n+1}(M)$ is a $\CC^{1}$-curve joining the left hand side to the right hand side of $\mr$. By construction, the tangent direction to this curve  at each point is contained in the stable cone-field at this point. Applying $F^{-n}$, we get in  $F^{-n}_{F(M)}(\mr)$ a $\CC^{1}$-curve joining the left hand side of $\CQ$ to it right hand side (note that $n>k_c$ because $F(M)$ is in $F(\mr)$), still with tangent direction in the stable cone fields.  
By definition, the slope of the tangent lines are bounded from above, and the curve must be a graph. By construction, it is a graph over the whole interval $[0,1]$ contained in the band $F^{-n}_{F(M)}(\mr)$. 
\end{proof}

\begin{proposition}
\label{prop-varie-stab}
Let $M$ be in $\mr$ and returning infinitely many often in $\mr$ by iterations of $F$. Let $n_{k}$, $k=1,2,\ldots$ be the positive return times, $n_{0}:=0$. Then, for each $k$ there exists  a curve  $\CC_{k}^{s}$ containing $F^{1+n_{k}}(M)$  and with slope in the stable cone field such that  for every $k<j$, 
$$F^{n_{j}-n_{k}}(\CC^{s}_{k})\subset \CC_{j}^{s}.$$
\end{proposition}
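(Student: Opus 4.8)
The plan is to follow the scheme of Proposition \ref{prop-var-uns-loc} with forward and backward time exchanged, the family $\CV$ replaced by the approximative stable curves of Definition \ref{def-relhoricurv}, and Lemma \ref{lem-relvertcurv} replaced by Lemma \ref{lem-preim-hori-curv}. Write $(n_{k})_{k\ge 0}$, $n_{0}=0$, for the forward return times of $M$ to $\mr$, and for $J>k$ put $m_{J}:=n_{J+1}-n_{J}-1$. The top and bottom sides of the horizontal band $F^{-m_{J}}_{F^{1+n_{J}}(M)}(\mr)$ are horizontal segments, hence approximative stable curves. Pulling them back successively by the inverse first return maps $F^{-(n_{i+1}-n_{i})}$ for $i=J-1,J-2,\dots,k$, always keeping the connected component through $F^{1+n_{i}}(M)$, Lemma \ref{lem-preim-hori-curv} keeps us inside the class of approximative stable curves. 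We thus get, for each fixed $k$, two sequences $\CC^{+,(J)}_{k}$, $\CC^{-,(J)}_{k}$ of approximative stable curves lying in the band $F^{-m_{k}}_{F^{1+n_{k}}(M)}(\mr)$ and keeping $F^{1+n_{k}}(M)$ between them; exactly as in Proposition \ref{prop-var-uns-loc} these sequences are monotone (nested) in $J$.

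To make this limit stay in the class I would strengthen Definition \ref{def-relhoricurv} by a uniform bound on the second derivative, deduced from \eqref{equ-curva-return} as in the proof of Lemma \ref{lem-relvertcurv}; with such a bound the two monotone sequences converge in the $\CC^{1}$-topology to approximative stable curves $\CC^{+,\infty}_{k}$ and $\CC^{-,\infty}_{k}$ which still trap $F^{1+n_{k}}(M)$. The heart of the matter is to prove that they coincide. Suppose not; then between the two limit graphs there is an open region $\CB$ of positive Lebesgue measure, contained by the nesting in the region $\mathcal{S}^{(J)}_{k}$ bounded by $\CC^{+,(J)}_{k}$ and $\CC^{-,(J)}_{k}$ for every $J>k$. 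Since $F^{n_{J}-n_{k}}$ sends $\mathcal{S}^{(J)}_{k}$ into the band $F^{-m_{J}}_{F^{1+n_{J}}(M)}(\mr)$ and $F^{m_{J}}$ sends that band into $\mr$, we get $F^{\,n_{J+1}-n_{k}-1}(\CB)\subset\mr$ for every $J>k$, i.e. the forward orbit of $\CB$ is confined to the small rectangle $\mr$ at infinitely many times.

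Now choose $p\in\CB\setminus\CO(\xi)$ and, shrinking $\CB$ if necessary, a short arc $\gamma\subset\CB$ through $p$ whose tangent lines lie in the unstable cone field $\mathsf{C}^{u}$ and whose whole forward orbit avoids $\CO(\xi)$. By $DF$-invariance of $\mathsf{C}^{u}$ (Proposition \ref{prop-cone-instable}) the tangent lines of $F^{n}(\gamma)$ stay in $\mathsf{C}^{u}$; and by Proposition \ref{prop-expanEupartielle} (equivalently, by the decomposition of the orbit into critical tubes and free moments used in the proof of Proposition \ref{prop-expansion}) the length of $F^{\,n_{J+1}-n_{k}-1}(\gamma)$ is at least $c_{0}\,\rho^{(n_{J+1}-n_{k}-1)/5}$ for some $c_{0}>0$ depending only on $\gamma$, hence tends to $+\8$. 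This contradicts $F^{\,n_{J+1}-n_{k}-1}(\gamma)\subset\mr$, whose diameter is less than $2\bm+\am$. Therefore no such bubble exists, $\CC^{+,\infty}_{k}=\CC^{-,\infty}_{k}=:\CC^{s}_{k}$, and this is the sought curve. The relation $F^{n_{j}-n_{k}}(\CC^{s}_{k})\subset\CC^{s}_{j}$ for $k<j$ follows at once, since for every $j>k$ the curve $\CC^{s}_{k}$ is the common limit of the pullbacks to level $k$ of the curves attached to level $j$, and the corresponding inclusions already hold at each finite stage.

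The step I expect to be the real obstacle is the exclusion of the bubble: one must check carefully that the component-wise pullbacks do force the forward orbit of $\CB$ back into $\mr$ infinitely often — this is where handling each critical tube in its full length makes the bookkeeping heavier than in the uniformly hyperbolic case — and one must ensure that the chosen unstable-cone arc avoids the critical orbit along its whole forward trajectory, so that the expansion estimates of Section \ref{sec-hyper-F} apply verbatim. A secondary difficulty, already mentioned, is the curvature control required to take the $\CC^{1}$-limit inside the class of approximative stable curves.
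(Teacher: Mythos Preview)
Your overall strategy --- pull back the boundary of a far-future band through successive inverse first returns to produce two nested monotone families of approximative stable curves, pass to the limit, and exclude a bubble by exhibiting an unstable-cone arc whose forward length blows up --- is precisely the paper's. Two points where the paper proceeds differently are worth noting.

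First, the paper does \emph{not} attempt any second-derivative control. It passes to the limit via Ascoli's theorem using only the uniform slope bound coming from the stable cone, and explicitly records (just after the proof) that at this stage the curves $\CC^{s}_{k}$ are merely Lipschitz graphs over $[0,1]$. Your proposed curvature bound ``deduced from \eqref{equ-curva-return} as in the proof of Lemma \ref{lem-relvertcurv}'' is not free: that formula concerns the \emph{forward} first-return acting on vertical graphs, whereas here one needs the \emph{backward} first-return acting on horizontal graphs, which passes through the inverse of the cubic map. The paper sidesteps this issue entirely; your extra step is unnecessary and would require a separate computation.

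Second, rather than seeding the induction with the two horizontal sides of $F^{-m_{J}}_{F^{1+n_{J}}(M)}(\mr)$, the paper builds a ``double-fins'' region inside $F(\mr)$ bounded by pieces of the two cubics and two short vertical segments; its preimage by $F$ is a genuine sub-rectangle of $\mr$, and Cardan's formula is invoked to check that the $F^{-1}$-images of the horizontal band boundaries are honest $\CC^{1}$ graphs with slope in the stable cone before Lemma \ref{lem-preim-hori-curv} is applied. This extra geometric care is what makes the monotonicity and nesting transparent, but your more direct approach reaches the same conclusion once those checks are supplied. The bubble-exclusion argument is the same in both, including the appeal to the expansion estimates of Section \ref{sec-hyper-F} along an unstable-cone arc.
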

\begin{proof}
We consider a family of positive integers $n_{k}$, such that $n_{1}+1+n_{2}+1+\ldots +n_{k}+1$ is the $k^{th}$ return time in $\mr$. 
For simplicity we denote by $M_{k}$ the $k^{th}$ return of $M$ in $\mr$. Thus we have 
$$M_{k+1}=F^{n_{k+1}}(F(M_{k})),$$
and $F(M_{k})$ belongs to $F(\mr)$. 

We construct a family of curves which accumulate on the local stable manifold. 
Consider $k$ big. $F(M_{k})$ belongs to $F(\mr)$, and $F(\mr)$ crosses the horizontal band $F^{-n_{k+1}}_{F(M_{k})}(\mr)$ (see Figure \ref{fig-longcubics}). 
 
\begin{figure}
\begin{center}
\includegraphics[scale=0.5]{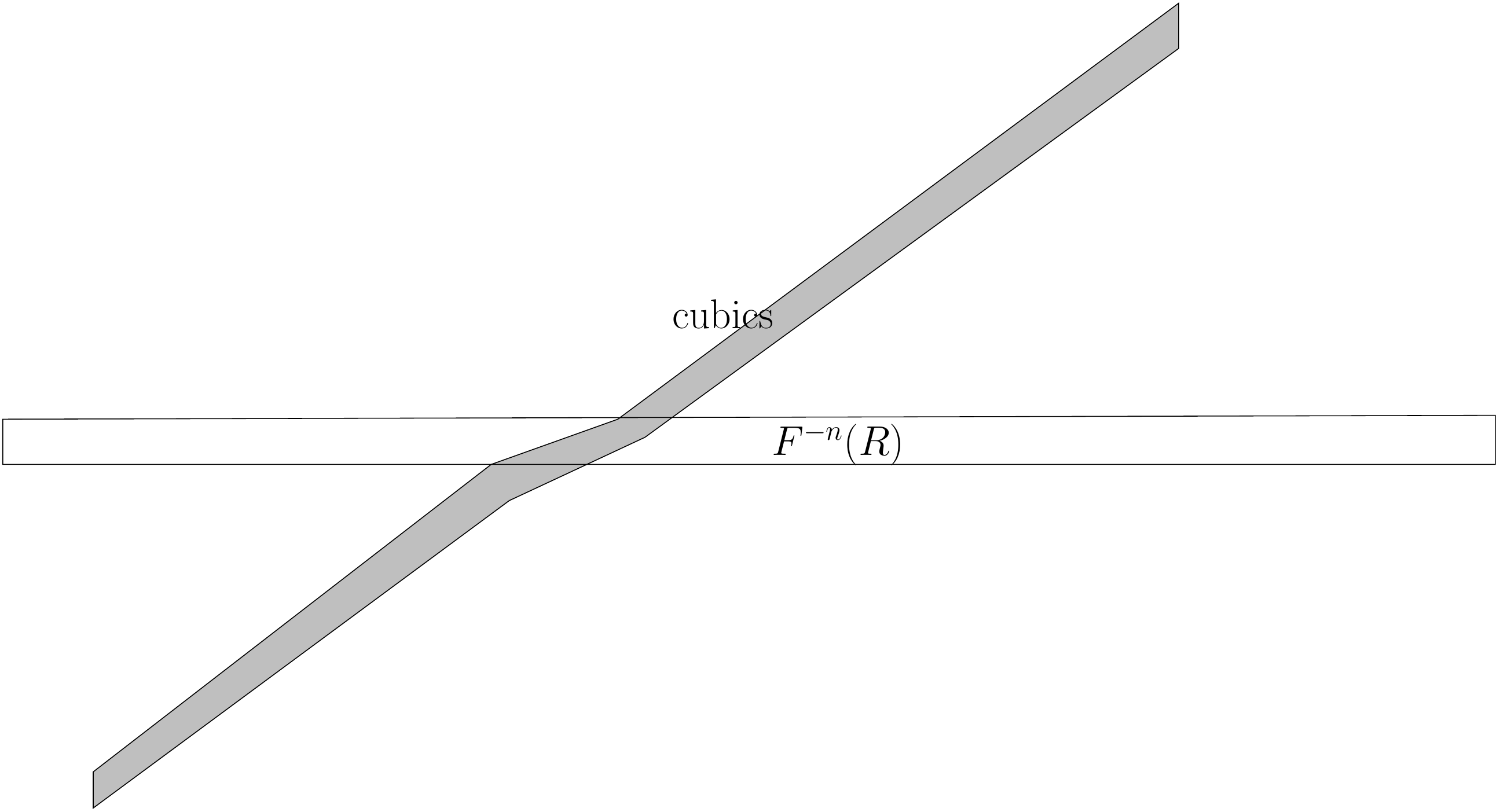}
 \caption{Long $u$-crossing}\label{fig-longcubics}
\end{center}
\end{figure}

Let us consider two  vertical segments. The first one starts at the intersection of the top horizontal line of $F^{-n_{k+1}}_{F(M_{k})}(\mr)$  with the right hand side cubic, and goes upwards until it reaches the left hand side cubic. 
The second one starts at the intersection of the bottom horizontal line of $F^{-n_{k+1}}_{F(M_{k})}(\mr)$   with the left hand side cubic, and go downwards until it reaches the right hand side cubic (see  Figure \ref{fig-doublefin}). 
The region bounded by these two segments and the cubic curves will be referred to as the zone with double fins.

\begin{figure}[htb]

\begin{center}
%\begin{psfrags}
\includegraphics[scale=0.4]{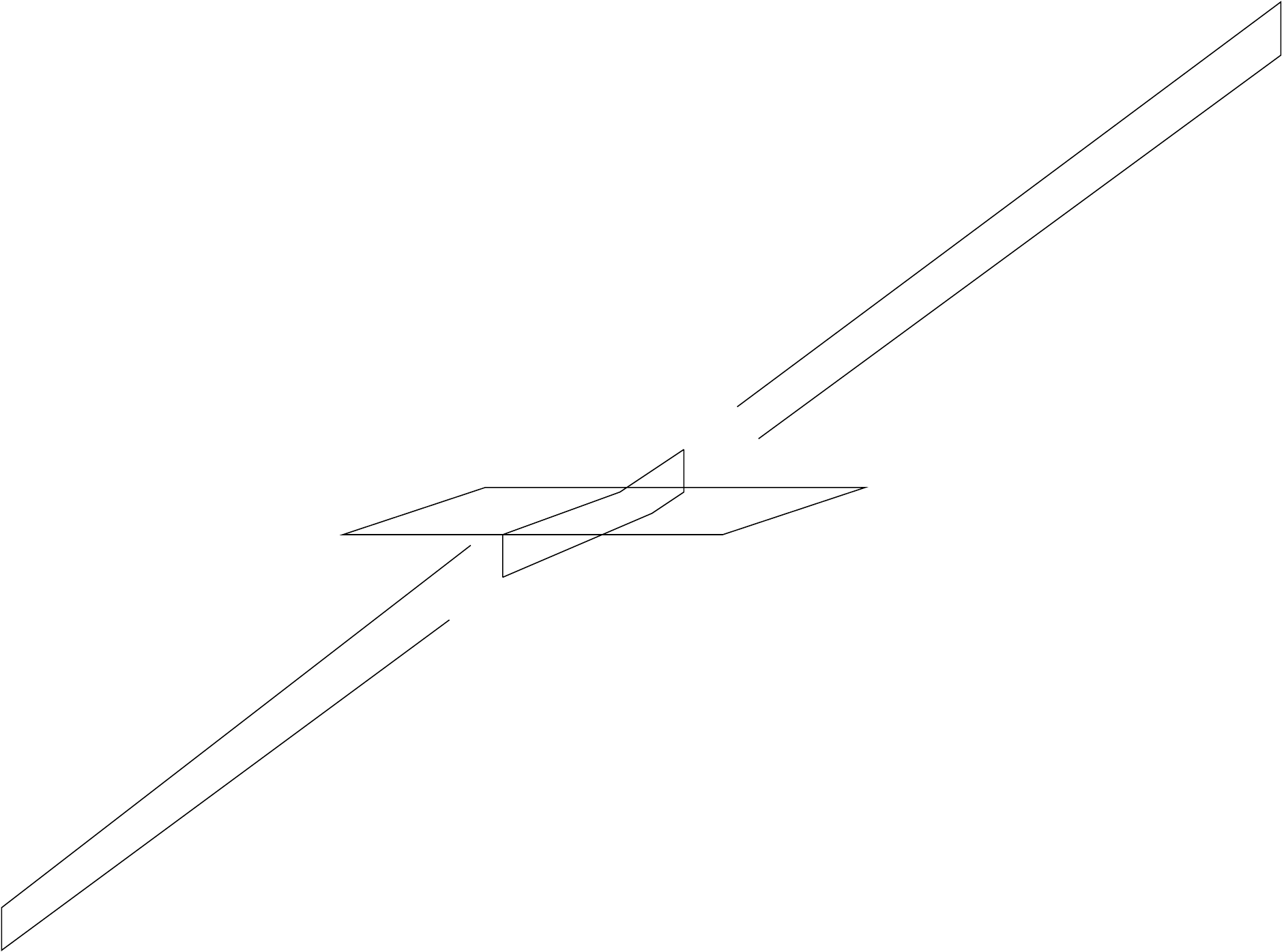}
 \caption{The two added fins}\label{fig-doublefin}
%\end{psfrags}
\end{center}

\end{figure}

 By construction, this zone is entirely in $F(\mr)$ and contains $F(M_{k})$. Its preimage by $F$ gives a rectangle $R_{k}$: the two vertical segments are sent over horizontal ones, and the two cubics are sent over vertical segments (see Figure \ref{fig-preimdoublefin})
\begin{figure}
\begin{center}
%\begin{psfrags}
\includegraphics[scale=0.5]{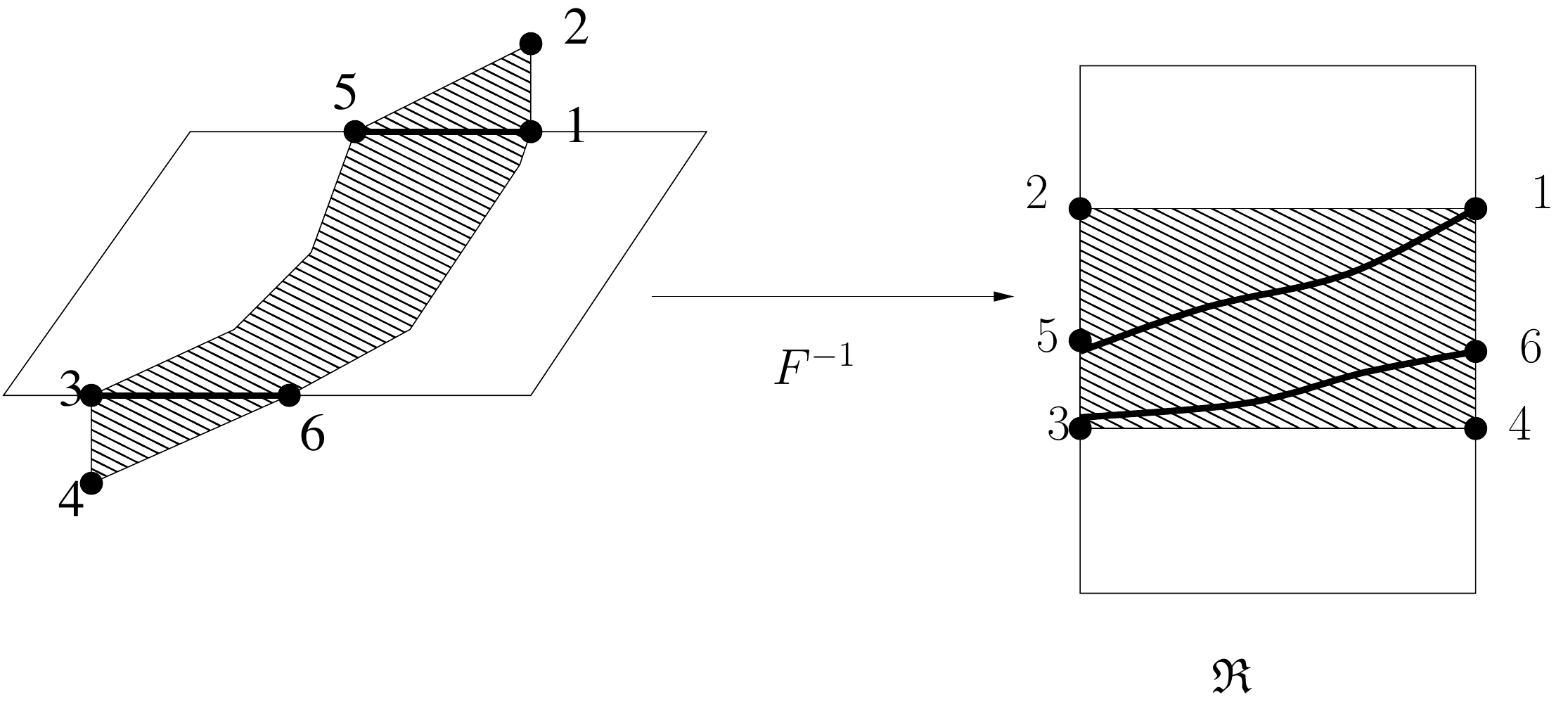}
\caption{Preimage of double fins zone}\label{fig-preimdoublefin}
%\end{psfrags}
\end{center}
\end{figure}

This rectangle has the same horizontal length than $\mr$ (namely $\am$) but has smaller height and is inside $\mr$. It also contains $M_{k}$. 
The image by $F^{-1}$ of the top and bottom sides of $F^{-n}\mr$ intersected to the zone with double fins, are curves joining one edge of the rectangle with the border of the other vertical side (see Figure \ref{fig-preimdoublefin}). Using Cardan's method to solve degree-3 equations we see that the curves are actually  graphs of a $\CC^{1}$ map over the horizontal side of the rectangle $R_{k}$, with slope in the stable cone. 
Lemma \ref{lem-preim-hori-curv} shows that their components by the push-backward $F^{-n_{k}}$ in $F^{-n_{k}}_{F(M_{k-1})}(\mr)$ are two approximative stable curves in $F^{-n_{k}}_{F(M_{k-1})}(\mr)$. 
 We can reproduce the construction of the double fins but with this thinner rectangle (see Figure \ref{fig-toto1}).

\begin{figure}
\begin{center}
%\begin{psfrags}
\includegraphics[scale=0.5]{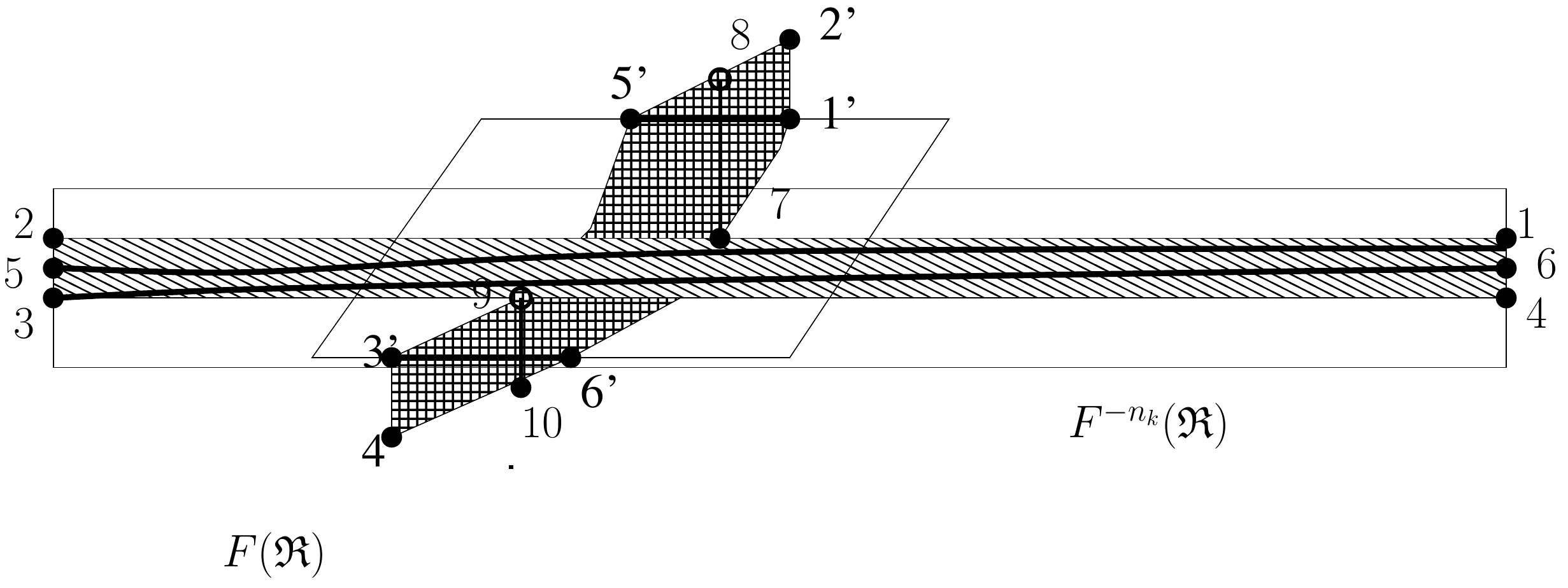}
 \caption{Second  and thinner double fins zone for thinner rectangle}
 \label{fig-toto1}
%\end{psfrags}
\end{center}
\end{figure}

The image by $F^{-1}$ of these curves yields 4 curves with slope in the stable cone in $\mr$ as described in Figure \ref{fig-preim5doubelfin}.

Then, we get in $F^{-n_{k-1}}_{F(M_{k-2})}(\mr)$ 4 approximative stable curves, and $F(M_{k_{1}})$ is below two and above two of these curves. The two extremal curves are the one obtained by doing the construction we described (double fin structure) but starting from the point $F(M_{k-1})$, and the two intermediate curves are the one obtained by our first construction (and pushed backward).

Doing this construction by induction, we get $F^{-n_{1}}_{F(M)}(\mr)$ a sequence of decreasing graphs and a sequence of increasing graphs, say $\CC_{k,+}$ and $\CC_{k,-}$. The point $F(M)$ is above every curve $\CC_{k,-}$ and below every $\CC_{k,+}$. Each curve $\CC_{k,+}$ or $\CC_{k,-}$ is a preimage of the horizontal part of one of the fins close to $F(M_{k})$. 

Again, Lemma \ref{lem-preim-hori-curv} shows that all these curves are approximative stable curves, thus have bounded slope. In other words, we have an increasing and a decreasing sequence of equicontiuous functions. Ascoli's theorem shows that both converge  to two curves, say $\CC_{\8,+}$ and $\CC_{\8,-}$. Their slopes are in the stable cones.

\begin{figure}[htb]
\begin{center}
\includegraphics[scale=0.5]{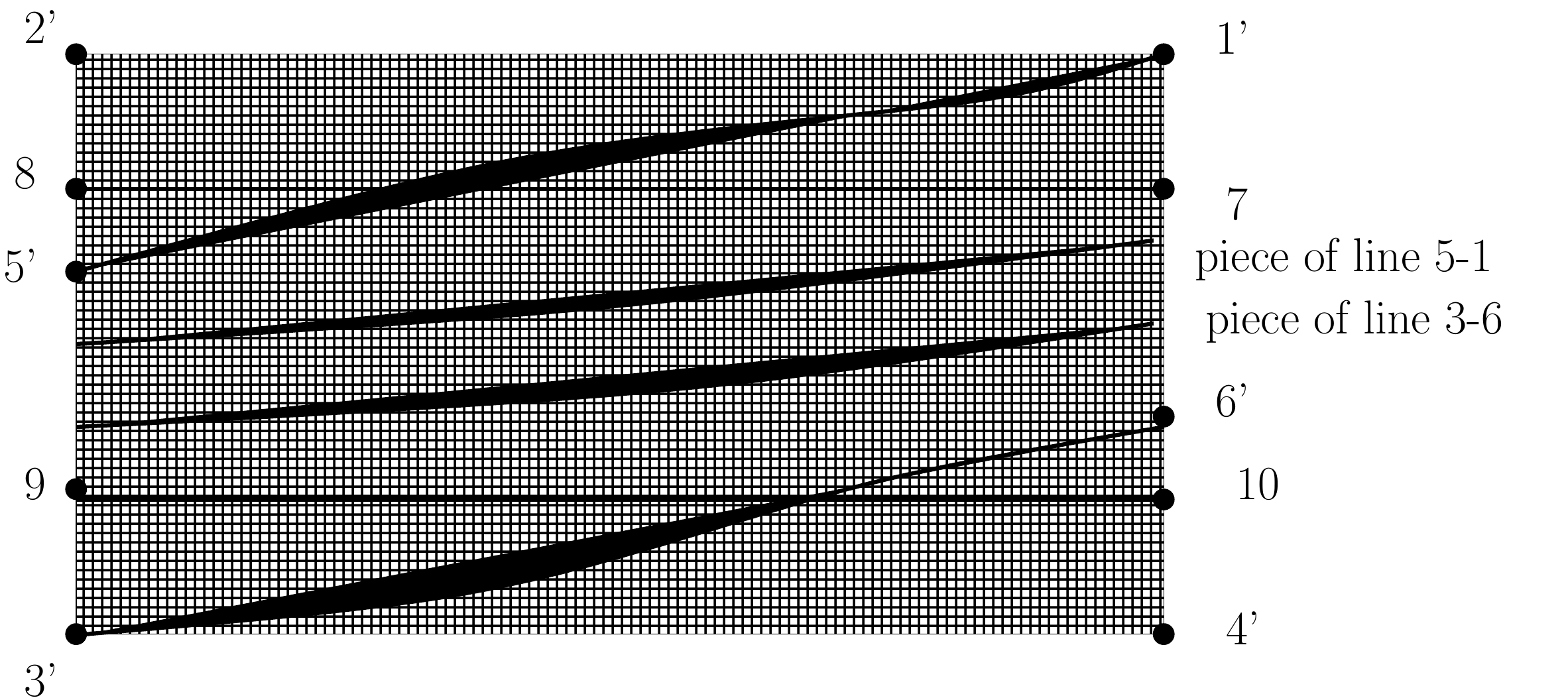}
\caption{4 curves in $\mr$}
\label{fig-preim5doubelfin}
\end{center}
\end{figure}

To finish the proof, we have to prove that these two curves are the same.
We copy and adapt the proof for the unstable manifold. The two curves are continuous, thus, if they do not coincide, the set where they are different are open and this generates ``bubbles''. 

Assume that there is a bubble between these two curves.

Pushing forward,  the image of the bubble is approaching the forward orbit of $M$ since there is exponential contraction in the stable direction (see Proposition \ref{prop-expand-es}). Moreover, by construction, the forward orbit of the bubble  stays in $\CQ$ and points in it have a well-defined unstable cone over them. 
We can thus take into the initial bubble a curve with slope in the unstable cones. By iterating forward, the length of the images of this curves grows exponentially fast (see Prop. \ref{prop-expanEupartielle}). This contradicts the fact that for all the points in the bubble, their forward image is always very close to the forward images of $M$.

Let $\CC_{\8,+}=\CC^{s}_{0}$ and consider, for each $F^{n_{k}}(M)$ a curve $\CC^{s}_{k}$ constructed the same way, each one being an approximative stable curve containing $F^{1+n_{k}}(M)$.  By construction, if $k<j$, 
$$F^{n_{j}-n_{k}}(\CC^{s}_{k})\subset \CC^{s}_{j}.$$
\end{proof}

We emphasize that at that step, the curves defined in Proposition \ref{prop-varie-stab} are just Lipschitz graph over the interval $[0,1]$, as they are obtained as limit of $\CC^{1}$ graphs with bounded slope. 
\subsection{Local (un)stable manifolds and product structure}

Consider a point $M$ in $\Lambda\cap \mr$. Assume it returns to $\mr$ infinitely many often in the past and in the future. Let $n_{k}^{-}$ and $n_{k}^{+}$ be the $k^{th}$-return times for backward and forward iterates into $\mr$. 
By Proposition \ref{prop-var-uns-loc} there exists a local manifold $\CC_{0}^{u}$ such that for every $M'$ in this manifold, the distance between $F^{-n^{-}_{k}}(M')$ and $F^{-n^{-}_{k}}(M)$ converges to $0$. 

Similarly, Proposition \ref{prop-varie-stab} yields a local stable manifold $\CC^{s}_{0}$, such that for every $F(M')$ in it, the distance between $F^{n_{k}^{+}+1}(M')$ and $F^{n_{k}^{+}+1}(M)$ converges to $0$.

By construction we clearly have for $M'$ in $\CC^{u}_{0}$, $\liminf_{\ninf}|F^{-n}(M)-F^{-n}(M')|=0$ and $M''$ in $\CC^{s}_{0}$, $\liminf_{\ninf}|F^{n}(M)-F^{n}(M'')|=0$

Nevertheless, we are interested in a converse  and more general result : if $M'$ satisfies that $|F^{-n}(M')-F^{-n}(M)|\to_{\ninf}0$, does this imply that for some $n$, $F^{n}(M')\in C^{u}_{0}(F^{n}(M))$? This is the purpose of Proposition \ref{prop-prodstruc} and the definition of the local product structure which separates points.

\subsubsection{Definition of $W^{u,s}_{loc}(M)$}
We start by recalling that  the construction of (un)stable local manifolds $W^{u,s}_{loc}(M)$ is standard for points which returns only finitely many times in $\mr$. We recall that the stable manifold is defined as soon as the forward orbits stays in $\CQ$ and the unstable manifold is defined as soon as the backward orbits stays in $\CQ$. 

\medskip
For $\xi$, we recall that its unstable manifold is naturally defined.It is also the unstable manifold for $(0,0)$ in $\CQ$ (bottom left point). 
Its stable manifold is also well defined, actually the stable local manifold for $F(\xi)$ is the horizontal line which contains $F(\xi)$ in $\CQ$. Then $W^{u,s}_{loc}(M)$ is well defined for any $M$ in $\CO(\xi)$.

\medskip
We thus now focus on points which return infinitely many times in $\mr$.

\begin{definition}
\label{def-ws}
Let $M$ be a point that returns infinitely many often in $\mr$ by iteration of $F$. Let $n$ be the smallest non-negative  integer such that $F^{n}(M)$ belongs to $\mr$, and let $\CC^{s}(F^{n+1}(M))$ be the curve constructed in Prop. \ref{prop-varie-stab} and containing $F^{n+1}(M)$. 

The connected component of $F^{-n-1}(\CC^{s}(F^{n+1}(M)))$ in $\CQ$ which contains $M$ is called the \emph{local stable manifold of $M$} and is denoted $W^{s}_{loc}(M)$. 
\end{definition}
We emphasize that, by construction, $W^{s}_{loc}(M)$ is a graph over $[0,1]$ and is inside the horizontal band $H_{0}^{1}(M)$. Moreover its slope at any point is in the stable cone at this point. We have also seen it is a $\CC^{1+}$ curve. 

%\begin{remark}
%\label{rem-Ws-locunif}
%It is straight forward to check that the same holds for points whose forward orbit does not return infinitely many times in $\mr$. This is the case for $\xi$. 
%$\blacksquare$\end{remark}
%

\bigskip
Now, we construct long ``vertical'' local unstable manifolds. Let $M$ be in the critical zone $\mr$, and returning infinitely many often in $\mr$ by iteration of $F^{-1}$. By construction, the curve $\CC^{u}(M)$ given by Proposition \ref{prop-var-uns-loc} is a ``vertical'' graph  going from the bottom of $\mr$ to the top of $\mr$. Its image by $F$ is a graph going from the vertical segment on the left hand side of $F(\mr)$ to the vertical segment of the right hand side of $F(\mr)$ (see Figure \ref{fig-ima-cv}).
This curve crosses the horizontal band $H_{0}^{k_{c}}(F_{0}(\xi))$, from the bottom to the top. 

All the points in this band have the same itinerary than  $F_{0}(\xi)$ by iteration of $F_{0}$ for the next $k_{c}$ iterations. By assumption, $F_{0}(\xi)$ never comes back to the rectangles $R_{7}$ and $R_{4}$. This means that for all these points, their images by $F^{j}$ and by $F_{0}^{j}$ coincide for $j\le k_{c}$. Hence,   $F_{k_{c}+1}(\CC^{u}(M))$ is a curve joining the bottom of the square $\CQ$ to its top. 

Moreover, this curve is a graph over the vertical segment $[0,1]$ (the left side of $\CQ$) with slope in the unstable cone field. By construction it is contained in a vertical band $V_{0}^{1}$. 

\begin{definition}
\label{def-wu}
Let $M$ be a point which returns infinitely many often in $\mr$ by iteration of $F^{-1}$. Let $n$ be the smallest positive integer such that $F^{-n}(M)\in \mr$. The connected component of $F^{n}(\CC^{u}(F^{-n}(M)))$  in $\CQ$ which contains $M$ is called the \emph{local unstable manifold} of $M$. It is denoted by $W^{u}_{loc}(M)$
\end{definition}
We have just seen above that $W^{u}_{loc}(M)$ is a graph over the vertical segment left side of $\CQ$, with slope in the unstable cone field, going from the bottom to the top of $\CQ$, and included in the vertical band $V_{0}^{1}(M)$. 

%\begin{remark}
%\label{rem-Wu-locunif}
%Again, construction of (un)stable local manifolds $W^{u,s}_{loc}(M)$ is standard for points which returns only finitely many times in $\mr$. We recall that the stable manifold is defined as soon as the forward orbits stays in $\CQ$ and the unstable manifold is defined as soon as the backward orbits stays in $\CQ$. 
%same holds for points whose backward orbit does not returns infinitely many times into $\mr$, and this is the case for $\xi$. 
%$\blacksquare$\end{remark}
%

\begin{proposition}
\label{prop-EuEsWus}
For $M\in\Lambda\setminus\CO(\xi)$, $T_{M}W^{u,s}_{loc}(M)=E^{u,s}(M)$.
\end{proposition}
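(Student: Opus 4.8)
The plan is to show the two inclusions $T_M W^u_{loc}(M) \subset E^u(M)$ and $T_M W^s_{loc}(M)\subset E^s(M)$ separately, since each is an exact dimension count once we know the tangent line is contained in the appropriate cone and is forward/backward invariant in the right sense. Consider first the unstable case. By Definition \ref{def-wu} the curve $W^u_{loc}(M)$ is a graph with slope everywhere in the unstable cone field; hence for every point $P \in W^u_{loc}(M)$ we have $T_P W^u_{loc}(M)\subset \mathsf{C}^u(P)$. The family of curves constructed in Proposition \ref{prop-var-uns-loc} is $F$-invariant (up to taking connected components and passing to the appropriate return), so for every $n\ge 0$ with $F^{-n}(M)$ defined, $W^u_{loc}(M)$ is contained in $F^{n}$ of the corresponding curve through $F^{-n}(M)$, and therefore $T_M W^u_{loc}(M)\subset DF^{n}\bigl(\mathsf{C}^u(F^{-n}(M))\bigr)$ for all $n$. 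By Proposition \ref{prop-def-Eu} the decreasing intersection $\bigcap_{n\ge 0} DF^{n}(\mathsf{C}^u(F^{-n}(M)))$ is exactly the single direction $E^{u}(M)$, so $T_M W^u_{loc}(M)=E^u(M)$.

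The stable case is symmetric but uses the backward-invariance of the stable cone field together with Proposition \ref{prop-expand-es}: $W^s_{loc}(M)$ from Definition \ref{def-ws} is a graph whose slope at every point lies in the stable cone, and the family of Proposition \ref{prop-varie-stab} is invariant under $F$ in the forward direction, so $T_M W^s_{loc}(M)\subset \bigcap_{n\ge 0} DF^{-n}\bigl(\mathsf{C}^s(F^{n}(M))\bigr)$, which by Proposition \ref{prop-expand-es} is the single direction $E^s(M)$. Again the equality follows because both sides are one-dimensional and one is contained in the other.

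The one technical subtlety — and the step I expect to require the most care — is that the curves $W^{u,s}_{loc}(M)$ produced in Section \ref{sec-mani} are a priori only Lipschitz (limits of $\mathcal C^1$ graphs with bounded slope), as the remark after Proposition \ref{prop-varie-stab} explicitly warns; so ``$T_M W^{u,s}_{loc}(M)$'' has to be justified as an honest tangent line before the cone-intersection argument applies. The remedy is to observe that the bounded-curvature control in the family $\CV$ (condition $3$ in \eqref{equ-def-cv}) and in Lemma \ref{lem-relvertcurv}, together with the fact that the limiting curve coincides with the intersection of nested images of uniformly $\mathcal C^1$ (indeed $\mathcal C^{1+}$, by Remark \ref{rem-u-deriseconde}) graphs, forces differentiability at $M$: the slopes of the approximating graphs at $M$ form a Cauchy sequence because the cone fields $DF^{n}(\mathsf{C}^{u}(F^{-n}(M)))$ shrink exponentially (Proposition \ref{prop-expanEupartielle} and the angle estimate in the proof of Proposition \ref{prop-def-Eu}). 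In fact, once one knows the approximating tangent directions all lie in cones shrinking to $E^u(M)$, the limit curve is automatically differentiable at $M$ with tangent $E^u(M)$, which is precisely the statement; the stable side is identical using the contraction estimate \eqref{equ1-contraCs}. For points $M\notin\mr$ that return only finitely often, the local manifolds are the standard hyperbolic ones and the identification $T_M W^{u,s}_{loc}(M)=E^{u,s}(M)$ is classical, so only finitely many iterates need to be pushed/pulled back to reduce to the case already treated.
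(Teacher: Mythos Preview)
Your proposal is correct and follows essentially the same route as the paper: the tangent line to $W^{u}_{loc}(M)$ lies in every $DF^{n}(\mathsf{C}^{u}(F^{-n}(M)))$ by the backward invariance of the curve family, and that nested intersection is $E^{u}(M)$ by Proposition~\ref{prop-def-Eu}; the stable case is symmetric via Proposition~\ref{prop-expand-es}. The paper's own argument is much terser (it simply invokes the proof of Proposition~\ref{prop-def-Eu} and asserts the conclusion), whereas you spell out the cone-intersection mechanism explicitly and, in addition, address the differentiability of the limiting Lipschitz graph --- a point the paper largely takes for granted (it asserts $\CC^{1+}$ regularity after Definition~\ref{def-ws} without a detailed justification); your observation that the exponentially shrinking cones force the approximating slopes to be Cauchy is the right way to close that gap.
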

\begin{proof}
We recall that unstable and stable conefields may be extended to points whose forward \ul{or} backward orbit stays in $\CQ$ (a weaker condition than being in $\Lambda$). By construction, this holds for points in any $W^{u,s}_{loc}$ and not only for point in $\Lambda$.

More precisely, for any point $M'\in W^{u}_{loc}(M)$, the backward orbit  of $M'$ is well defined and so is the unstable conefield. We can consider $\disp\cap_{n}DF^{n}(\mathsf{C}^{u}(F^{-n}(M')))$; this is a single direction $E^{u}(M')$ (see Prop. \ref{prop-def-Eu}). 

The proof of Proposition \ref{prop-def-Eu} also shows that the tangent space to $W^{u}_{loc}(M)$ at $M'$ is  $E^{u}(M')$.

A similar result holds for $M''\in W^{s}_{loc}(M)$ and $E^{s}(M'')$. 
\end{proof}

An immediate consequence of Propositions \ref{prop-expansion} and \ref{prop-EuEsWus} and Corollary \ref{cor-map-hyperbolic} is that for $M'$ in $W^{u}_{loc}(M)$, 
$$d(F^{-n}(M),F^{-n}(M'))\to_{\ninf}0,$$
and for any $M''\in W^{s}_{loc}(M)$, 
$$d(F^{n}(M),F^{n}(M'))\to_{\ninf}0.$$

\subsubsection{The local product structure}

Roughly speaking, Proposition \ref{prop-EuEsWus} means that $F$ expands unstable manifolds and $F^{-1}$ expands stable manifolds. This, combined with the fact that the graphs are transversal, yields a local product structure which separates points:

\begin{proposition}
\label{prop-prodstruc}
Let $M$ and $M'$ be in $\Lambda$. Then, $W^{u}_{loc}(M)\cap W^{s}_{loc}(M')$ is a single point denoted by $[M,M']$, which is in $\Lambda$. Moreover,

\begin{itemize}

\item The point $M'$ satisfies $\disp|F^{-n}(M)-F^{-n}(M')|\to_{\ninf}0$ if and only if there exists $m$ such that $F^{m}(M')\in W^{u}_{loc}(F^{m}(M))$.

\item The point $M'$ satisfies $\disp|F^{n}(M)-F^{n}(M')|\to_{\ninf}0$ if and only if there exists $m$ such that $F^{m}(M')\in W^{s}_{loc}(F^{m}(M))$.
\end{itemize}
\end{proposition}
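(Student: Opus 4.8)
The plan is to prove the three assertions in the order they are stated, using the geometry of the local manifolds established in Definitions \ref{def-ws}--\ref{def-wu} together with the expansion/contraction estimates of Propositions \ref{prop-expansion}, \ref{prop-expand-es} and \ref{prop-EuEsWus}.

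\medskip
\noindent\emph{Step 1: existence and uniqueness of $[M,M']$.} For the intersection, I would distinguish according to whether the orbits return to $\mr$ infinitely often or not. If either $M$ or $M'$ eventually leaves $\mr$ forever (in the appropriate time direction), the relevant local manifold is a standard uniformly hyperbolic object and the transverse intersection is classical. In the remaining case, I would use the explicit geometric description: $W^{s}_{loc}(M')$ is a Lipschitz graph over the horizontal interval $[0,1]$ inside the horizontal band $H_{0}^{1}(M')$, with slope everywhere in the stable cone field; while $W^{u}_{loc}(M)$ is a Lipschitz graph over the vertical interval $[0,1]$ inside the vertical band $V_{0}^{1}(M)$, with slope everywhere in the unstable cone field. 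Since the stable and unstable cone fields are disjoint (the stable cone is defined as the closure of the complement of the unstable one), a horizontal-type graph and a vertical-type graph crossing the unit square meet in exactly one point by a standard graph-transversality argument (intermediate value theorem for the difference of the two graphs, plus the cone condition to rule out multiple intersections). That $[M,M']$ lies in $\Lambda$ follows because its forward orbit shadows that of $M'$ (it is on $W^{s}_{loc}(M')$, hence stays in $\CQ$ forever in the future by the remark after Prop. \ref{prop-EuEsWus}) and its backward orbit shadows that of $M$ (it is on $W^{u}_{loc}(M)$, hence stays in $\CQ$ forever in the past); thus its full orbit stays in $\CQ$, i.e. it is in $\Lambda$.

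\medskip
\noindent\emph{Step 2: the two equivalences.} The ``if'' directions are already recorded in the excerpt: if $F^{m}(M')\in W^{u}_{loc}(F^{m}(M))$ then $|F^{-n}(M)-F^{-n}(M')|\to 0$ by the consequence of Propositions \ref{prop-expansion} and \ref{prop-EuEsWus} stated just before this subsection (and symmetrically for the stable case). For the ``only if'' direction, suppose $|F^{-n}(M)-F^{-n}(M')|\to 0$. I would pick $m$ large enough that $F^{-n}(M)$ and $F^{-n}(M')$ are within $l_{0}$ (say) for all $n\ge 0$ after replacing $M,M'$ by $F^{m}(M),F^{m}(M')$, and then form the point $P:=[M,M']=W^{u}_{loc}(M)\cap W^{s}_{loc}(M')$ produced in Step 1. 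By Step 1's shadowing, $|F^{-n}(M)-F^{-n}(P)|\to 0$; combined with the hypothesis this gives $|F^{-n}(P)-F^{-n}(M')|\to 0$. Now $P$ and $M'$ both lie on $W^{s}_{loc}(M')$, so their difference vector is (asymptotically) in the stable cone field, and Proposition \ref{prop-expand-es} (contraction in $E^{s}$ only under \emph{complete} forward pieces) runs the other way: applying $F^{-n}$ to a nonzero vector in the stable direction it must \emph{grow}, so the only way $|F^{-n}(P)-F^{-n}(M')|$ can go to $0$ is $P=M'$. Hence $M'=P\in W^{u}_{loc}(M)$, which is the desired conclusion (with $m=0$ after the initial shift). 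The stable equivalence is proved identically, exchanging the roles of $F$ and $F^{-1}$ and of the unstable and stable objects.

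\medskip
\noindent\emph{Main obstacle.} The delicate point is the ``only if'' rigidity argument in Step 2, because the contraction/expansion estimates of Propositions \ref{prop-expansion}, \ref{prop-expand-es} and \ref{prop-expanEupartielle} are only valid along \emph{complete} pieces of orbit (those containing a whole number of critical tubes), not along arbitrary time windows — this is exactly the subtlety the authors flag in the opening of Section \ref{sec-mani}. So I would have to argue that, since $M$ (hence $P$, hence $M'$) returns to $\mr$ infinitely often, there are infinitely many times at which the relevant backward (resp. forward) piece is complete, and along that subsequence the genuine exponential estimate applies; between those times the cone invariance together with the rough (non-exponential but $\ge 1$) expansion bound inside a critical tube (Inequality \eqref{equ2.2-expansion 0-m} and the discussion around \eqref{equ-conunstsortie}) guarantees the distance does not shrink back. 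Passing to such a subsequence and invoking the true exponential bound there forces the two points to coincide. Handling the bookkeeping of tubes versus free moments cleanly — so that one never needs an estimate on a non-complete window — is the part that requires care; everything else is the standard graph-transform / local-product-structure machinery.
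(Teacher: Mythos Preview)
Your Step 1 and the ``if'' halves of Step 2 are fine and essentially match the paper. For the ``only if'' halves your route diverges from the paper's, and the divergence hides a genuine gap.

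The paper does \emph{not} argue via backward growth along the stable leaf. Instead it proves two combinatorial Claims about points on an \emph{unstable} leaf: (Claim 1) if $P\neq M$ lie on the same $\CC^{u}(M)$, forward expansion (Proposition~\ref{prop-rem-expan-cu}) forces $F^{n_{k}}(P)$ to eventually exit $\CC^{u}(F^{n_{k}}(M))$ at some return time $n_{k}$ to $\mr$; (Claim 2) once that happens, a short case analysis of the itineraries near the critical and post-critical regions shows that at some later time the two forward iterates land in \emph{different} horizontal bands $H_{0}^{1}$, hence are at distance $>d$. The ``only if'' then follows by forming $P:=[F^{n}(M),F^{n}(M')]$ on $W^{u}_{loc}(F^{n}(M))$ for $n$ large; $P$ shares the forward band-itinerary of $F^{n}(M')$ (same stable leaf), which by hypothesis coincides with that of $F^{n}(M)$; the Claims then force $P=F^{n}(M)$, whence $F^{n}(M')\in W^{s}_{loc}(F^{n}(M))$. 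The other equivalence is obtained by the dual argument.

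Your argument instead puts $P=[M,M']$ and $M'$ on the \emph{same stable leaf} and asserts that $|F^{-n}(P)-F^{-n}(M')|\to 0$ is impossible unless $P=M'$, because ``$F^{-n}$ expands vectors in $E^{s}$''. The gap is that Proposition~\ref{prop-expand-es} is a \emph{pointwise derivative} bound; converting it into a bound on the distance between two points requires integrating along a curve tangent to $E^{s}$ --- the arc of $W^{s}_{loc}(M')$ joining $P$ to $M'$. But stable leaves are only \emph{forward}-invariant in $\CQ$: under $F^{-n}$ that arc will typically leave $\CQ$ (only its two endpoints $P,M'\in\Lambda$ are guaranteed to stay), and once it does, neither the cone field nor the estimate of Proposition~\ref{prop-expand-es} is available along it. Your ``main obstacle'' paragraph worries about complete versus incomplete windows, which is real but secondary; the primary missing ingredient is a mechanism --- like the paper's itinerary Claims --- that separates the \emph{two endpoints} directly without relying on the connecting stable arc surviving backward iteration.
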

\begin{proof}
The fact that $W^{u}_{loc}(M)\cap W^{s}_{loc}(M')$ is non empty  is a consequence of the fact that these two graphs cross $\CQ$ in all its extension, the first from the bottom to the top, and the second from the left to the right side of $\CQ$. 

The intersection is a single point because  at any intersection point, the slope  of $W^{u}_{loc}$ is always strictly more vertical than the slope of $W^{s}_{loc}$, except for the countable set of points $\{ F^{k}(\xi),k\in \Z\}$ where both directions coincide.  

By construction, the point $M'':=[M,M']$ has its backward orbit converging to the backward orbit of $M$, and its forward orbit converging to the forward orbit of $M'$, both being always in $\CQ$, and then it belongs to $\Lambda$. 

\medskip
Let us now prove the two last properties. For that, we assume that the orbit (backward or forward) of $M$ visits infinitely many often $\mr$, otherwise the proof is as in the uniformly hyperbolic setting\footnote{Even if $M=\xi$.}. 

We assume just for now that $M$ belongs to $\mr$. Let $n_{k}$, $k=1,2,\ldots$ be the sequence of positive return times into $\mr$, and pick $P\in \CC^{u}(M)$, $P\neq M$. 

\paragraph{Claim.} There exits a positive integer $k$ such that $F^{n_{k}}(P)\in \CC^{u}(F^{n_{k}}(M))$ but $F^{n_{k+1}}(P)\notin \CC^{u}(F^{n_{k+1}}(M))$. 

\begin{proofof}{the claim}
By Proposition \ref{prop-rem-expan-cu}, for every $j$, $DF^{-n_{j}}$
acts as a contraction  of ratio at least $(l^{+}(M))^{-1}\rho^{-\frac{n_{j}}5}$ in $TW^{u}_{loc}(F^{n_{j}}(M))$. 
\end{proofof}

Let us consider this biggest integer $n_{k}$. 

\paragraph{Claim.} Let $n_{k}$ be as above. There exists $n>n_{k}$ such that $F^{n}(M)$ and $F^{n}(P)$ belongs to two different horizontal bands $H_{0}^{1}$. 

\begin{proofof}{the claim}
The two points $F^{n_{k}+1}(P)$ and $F^{n_{k}+1}(M)$ belongs to $F(\mr)$. If they have two different first return times to $R_{4}$, then either $F^{n_{k+1}}(P)$ does not belong to $H_{0}^{1}(\xi)$ (and the claim is proved) or it belongs to $H_{0}^{1}(\xi)$ but then it does not belong to $V_{0}^{1}(\xi)$ (namely it does not belong to $R_{1}\cup R_{4}\cup R_{7}$). Then, $F^{n_{k+1}-1}(M)$ and $F^{n_{k+1}-1}(P)$ belong to two different horizontal bands $H_{0}^{1}$. 

If $F^{n_{k+1}}(P)$ belongs to $R_{4}$, it cannot belong to $\mr$. If it does not belong to $H_{0}^{k_{c}+1}$, then $F^{n_{k+1}+k_{c}}(M)$ and $F^{n_{k+1}+k_{c}}(P)$ belong to two different horizontal bands $H_{0}^{1}$ (note that $F$ and $F_{0}$ coincide for times between $n_{k+1}+1$ to $n_{k+1}+k_{c}$).

Finally, if $F^{n_{k+1}}(P)$ belongs to $H_{0}^{k_{c}+1}(\xi)$, then it does not belong to the same vertical band $V_{0}^{n_{c}+1}$. By construction, the first return time to $\mr$ of a point $\mr$ is bigger than $n_{c}+k_{c}+2$,and  this shows that there exists $n_{k}<n<n_{k+1}$ such that $F^{n}(M)$ and $F^{n}(P)$ belong to two different vertical bands $V_{0}^{1}$. Then, $F^{n-1}(M)$ and $F^{n-1}(P)$ belong to two different horizontal bands $H_{0}^{1}$.
\end{proofof}

\bigskip
Let us now finish the proof of  Proposition \ref{prop-prodstruc}. Assume that $M$ and $M'$ satisfy 
$$|F^{n}(M')-F^{n}(M)|\rightarrow_{\ninf}0.$$
Let consider $n_{d}$ such that for every $n>n_{d}$, 
$$|F^{n}(M')-F^{n}(M)|<d,$$
and consider $P:=[F^{n}(M),F^{n}(M')]$. By construction, for every $j\ge 0$, $F^{j}(P)$ and $F^{n+j}(M')$ belong to the same horizontal band $H_{0}^{1}$. The two claims show that if  $P\neq M$, then for some $j$, $F^{j}(P)$ and $F^{n+j}(M)$ belong to two different horizontal bands. Then, for such $j$, 
$$|F^{n+j}(M)-F^{n+j}(M')|>d,$$
which contradicts the assumption. 

Therefore, $P=M$ and $F^{n}(M')$ belongs to $W^{s}_{loc}(F^{n}(M))$. 

\bigskip
The proof of the other property is similar.  
\end{proof}

Expansion in the unstable direction by iteration of $F$ or in the stable direction by iterations of $F^{-1}$ yields the second item of the main theorem in Section~\ref{statements} 

\begin{corollary}
\label{coro-expan-shift}
The map $F$ is expansive with expansivity constant $d$. It is conjugated, by a H\"older continuous homeomorphism $\Theta_{F}$, to the subshift of finite type $\Sigma_A^9$, with 9 symbols and with the matrix $A$ given in  \eqref{trans.matrix}.  
\end{corollary}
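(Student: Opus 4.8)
The plan is to establish expansivity first, then use it together with the uniform transitivity rates to build the conjugacy and finally to promote it to a Hölder homeomorphism. For expansivity, suppose $M,M'\in\Lambda$ satisfy $|F^{n}(M)-F^{n}(M')|\le d$ for every $n\in\Z$. I would apply Proposition \ref{prop-prodstruc}: the bound for $n\ge 0$ (using the appropriate $F^{m}$-shift) forces $M'\in W^{s}_{loc}(F^{m}(M))$ for some $m$ after relocating, and symmetrically the bound for $n\le 0$ forces $M'$ into the local unstable manifold. Intersecting, $M'\in W^{u}_{loc}(M)\cap W^{s}_{loc}(M)$, which by the transversality part of Proposition \ref{prop-prodstruc} is the single point $[M,M]=M$ (the directions coincide only on $\CO(\xi)$, but there the local stable and unstable manifolds are a horizontal line and a vertical-type graph, hence still meet in one point, cf.\ the footnote in that proposition). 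Hence $M=M'$ and $d$ is an expansivity constant.

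Next, I would define $\Theta_F$. Since $F_0$ is the piecewise linear horseshoe conjugated to $\Sigma^9_A$ and $F$ agrees with $F_0$ outside $\mr\subset R_4$, the natural candidate is the itinerary map: to $M\in\Lambda$ associate the sequence $(a_n)_{n\in\Z}$ with $F^{n}(M)\in R_{a_n}$. The transition rule $A$ is inherited from the first-generation partition, which Lemma \ref{lem-comp-bands} and Remark \ref{rem-rectangle=gg0} show is unchanged by the perturbation; and the admissible-sequence constraint (e.g.\ the forced itinerary of $\xi$ and of the post-critical orbit) is exactly what condition on $\xi'$ guarantees. Surjectivity onto $\Sigma^9_A$ follows because every $(n,k)$-rectangle of $G^{n_c+2,k_c}$ sits inside the corresponding $G_0$-rectangle and the nested-rectangle intersections are nonempty (the local product structure of Proposition \ref{prop-prodstruc} gives a point with any prescribed admissible past and future). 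Injectivity is precisely expansivity with constant $d$, since two points with the same itinerary stay within $d$ of each other for all time. Continuity of $\Theta_F$ and of $\Theta_F^{-1}$ is standard once one knows rectangles of generation $(n,k)$ shrink in both directions as $n,k\to\infty$, which holds because horizontal sizes are $O(\lambda^n)$ and vertical sizes are $O(\s^{-k})$ (or $O(\rho^{-k})$), both going to $0$; compactness of $\Lambda$ and $\Sigma^9_A$ then upgrades the continuous bijection to a homeomorphism conjugating $F$ and the shift.

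Finally, to obtain Hölder regularity I would quantify the shrinking rates. A cylinder of $\Sigma^9_A$ of length $2N+1$ corresponds to a rectangle of $\Lambda$ of generation roughly $(N,N)$; by Proposition \ref{prop-expaEucomplete}/\ref{prop-expanEupartielle} and its stable analogue (Proposition \ref{prop-expand-es}), distances along unstable manifolds contract backward at rate $\rho^{-1/5}$ per complete piece and distances along stable manifolds contract forward at rate $\Delta^{n}\rho^{-n/5}$, so the diameter of such a rectangle is bounded by $C\theta^{N}$ for some $\theta\in(0,1)$ depending on $\rho,\Delta,\lambda,\s$. Comparing with the standard metric $d_\Sigma(\underline a,\underline b)=2^{-N(\underline a,\underline b)}$ on the subshift gives $|\Theta_F^{-1}(\underline a)-\Theta_F^{-1}(\underline b)|\le C\, d_\Sigma(\underline a,\underline b)^{\gamma}$ with $\gamma=\log\theta^{-1}/\log 2>0$, and in the other direction a uniform lower bound $c\theta_1^{N}$ on rectangle diameters (horizontal size $\lambda^N$ from below) yields Hölder continuity of $\Theta_F$ itself. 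The main obstacle I anticipate is the bookkeeping near $\CO(\xi)$: there the unstable and stable directions coincide, so transversality degenerates and the ``one intersection point'' argument and the two-sided contraction estimates must be run using the explicit geometry of the critical tubes (the cubic image of $\mr$ and the horizontal stable leaf through $F(\xi)$) rather than the generic cone-transversality; but since $\CO(\xi)$ is a single orbit with a prescribed symbolic itinerary, expansivity and the conjugacy are not affected, and the Hölder estimates only need the uniform rates of Propositions \ref{prop-expaEucomplete} and \ref{prop-expand-es}, which already incorporate the critical-tube behaviour.
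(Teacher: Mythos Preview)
Your plan is correct and matches the paper's approach: expansivity from the local product structure of Proposition~\ref{prop-prodstruc}, the itinerary map as conjugacy, and shrinking of generation-$(n,k)$ rectangles for continuity and H\"older regularity. The one place the paper is more explicit than your outline is precisely the obstacle you flag at $\CO(\xi)$: rather than invoking Propositions~\ref{prop-expaEucomplete}/\ref{prop-expand-es} there, the paper computes directly with the cubic. For $\ul m$ agreeing with $\ul\xi$ on $[-p,q]$, the point $[\ul\xi,\ul m]$ has local coordinates $(0,y')$, its image is $(-\eps_1 y',-c{y'}^3)$, and the forward itinerary constraint forces $c|y'|^{3}\rho^{q-2}\le d$, while the backward constraint gives $x\lambda^{p-1}\lesssim d$; this yields the H\"older rate at $\ul\xi$ without appealing to the uniform-rate propositions (whose constants $l^{\pm}(M)$ or $\kappa(M)$ are point-dependent and degenerate as $M\to\xi$). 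So your last sentence slightly overstates what those propositions buy near the tangency; the explicit cubic estimate is what actually does the work there, exactly as you anticipated two sentences earlier.
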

\begin{proof}
Actually, the proof of Proposition \ref{prop-prodstruc} shows a better result:
$$M'\in W^{u}_{loc}(M)\iff \forall n\ge 1\  |F^{-n}(M)-F^{-n}(M')|<d.$$
The same result holds for $W^{s}_{loc}$ and forward iterates. This shows that $d$ is an expansivity constant. 

Let us now show that $\Theta_{F}$ is continuous. 
Let us first prove continuity at $\xi$. Let $\ul\xi$ be such that $\Theta_{F}(\ul\xi)=\xi$. Consider $\ul m$ coinciding with $\ul\xi$ from $-p$ to $q$. Set $M:=\Theta_{F}(\ul m)$.

The point $\ul m':=[\ul \xi,\ul m]$ corresponds to a point with the same past than $\xi$ and the same future than $M$. It is thus on the intersection of the stable leaf of $M$ and the unstable leaf of $\xi$. Assuming $p$ and $q$ are large, this corresponds to a point having local coordinates of the form $(0,y')$. Its image by $F$ is 
$$(-\eps_{1}y',-c{y'}^{3}).$$

The future itinerary coincides with the one of $F(\xi)$ for $q-1$ iterates, and we recall that  expansion is always greater than $\rho$ in the vertical direction (remember that $\xi$ never returns to $\mr$)
we get
$$c|y'|^{3}\rho^{q-2}\le d.$$
The larger $q$ is, the smaller $|y'|$ is. 

If we set $M=(x,y)$ (with local coordinates), $F^{-1}(M)$ stays into $R_{7}$ for $p-1$ iterates of $F^{-1}$, which yields
$$x\lambda^{p-1}\lesssim d.$$ The larger $p$ is the smaller $x$ is. This also means that $F(M)$ is below the image of the vertical line $X=x$ by $F$, which is the  cubic of equation 
$$(-\eps_{1}Y, -bx-cY(Y^{2}+x)).$$
On the other hand $F(M)$ is on the stable leaf of the point $(-c\eps_{1}y',-c{y'}^{3})$. The closer to $F(\xi)$ a point is, the thinner the stable cone is (and centered on the horizontal line), which means that the closer the stable manifold is from a horizontal straight line. 

\begin{figure}[htbp]
\begin{center}
\includegraphics[scale=0.5]{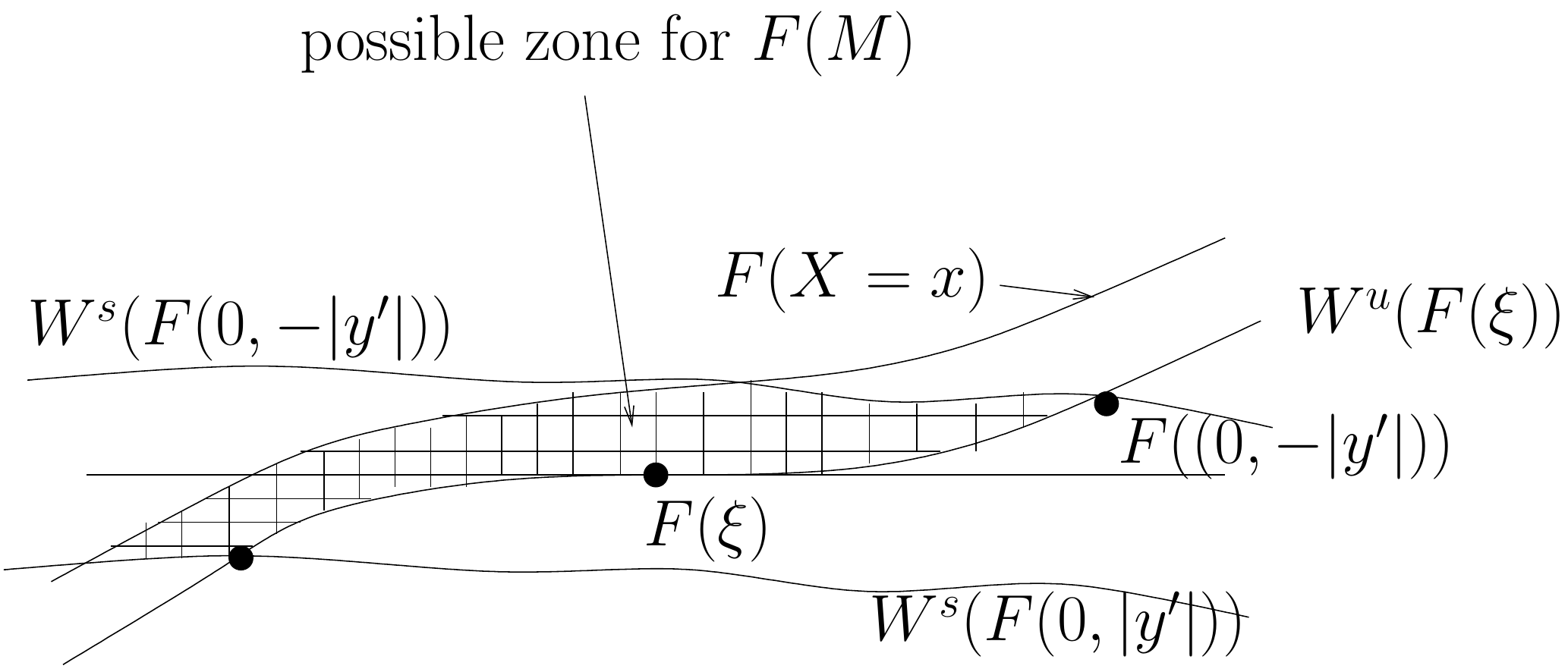}
\caption{computation of possible zone for $F(M)$}
\label{fig-intersect}
\end{center}
\end{figure}

This means that if $p$ and $q$ increase, the region (see Figure \ref{fig-intersect}) which contains $F(M)$ get smaller and concentrated around $F(\xi)$, and this proves continuity of $\theta_{F}$ at $\ul \xi$.

Continuity for points of the form $\s^{n}(\ul\xi)$ can be deduced from continuity at $\ul\xi$.

Now, consider  $M=\Theta_{F}(\ul m)$ and $M'=\Theta_{F}(\ul m')$. Assume $\ul m$ and $\ul m'$ coincide for $-p,\ldots,q$. Assume $M\notin \CO(\xi)$. We copy the arguments for the proof of the continuity at $\xi$. 

Set $M'':[M,M']$ and $M''':=[M',M]$. Roughly speaking, 
$$|M-M''|<d\kappa(M)\rho^{-\frac{q}5}\text{ and }|M-M'''|<d\kappa(M)\lambda^{-\frac{p}5}.$$
And in that case, the local unstable leaves are ``more'' vertical than for $F(\xi)$ and the local stable leaves more horizontal. 
This shows continuity for every $\ul m$ where $M=\Theta_{F}(\ul m)\notin \CO(\xi)$. 

\end{proof}

%
%\begin{definition}
%\label{def-ninnout2}
%If $M$ is in $\mr$, set $\Theta^{-1}_{F}(M)=(x_{n})_{n\in \Z}$ and $\Theta_{F}^{-1}(\xi):=(\xi_{n})_{n\in\Z}$, and define the entrance and escape times as follows:
%\begin{itemize}
%\item $n_{in}(M)$ is the biggest $n$ such that $x_{j}=7$ for every $-n\le j\le -1$.
%
%\item $n_{out}(M)$ is the biggest integer  $n$ such that $x_{j}=\xi_{j}$ for every $0\le j\le n$. 
%
%\end{itemize}
%\end{definition}
%

%%%%%%%%%%%%%%%%%%%%%%%%%%%%%%%%%%%%%%%%%%%%%%%%%%%%%%%%%%%%%%%%%%%%%%%%%%%%%%%%%%%%%%%%%%%%%%%%%%%%%%%%%%%%%%%%%%%%%%%%%%%%%%%%%%%%%%%%%%%%%%%%%%%%%%

\section{Final remarks}\label{sec-border}

It remains to prove the item $1$ of the main theorem, and the corollary stated in Section~\ref{statements}.

\subsection{Heteroclinic tangency}

In our construction, the backward itinerary of $\xi$ is fixed, since $\xi$ belongs to the unstable leaf of $P_{b}:=0$. 

For the forward  itinerary we recall that our unique assumptions were that the point never comes back into $R_{7}\cup R_{4}$ and visits infinitely many times each one of the two regions $R_{1}\cup R_{2}\cup R_{3}$ and $R_{5}\cup R_{6}$. This can be realized by a periodic or non-periodic point. Chose a point $P_{f}$ with such a forward orbit, and take $F(\xi)$ in $W^{s}_{loc}(P_{f})$.

Note  that second point $P_{f}$ can be chosen in a the set of points whose trajectories satisfy the assumptions above, and this set is non-countable.

%To give sense to ``the set of positive measure'' for the choice of $P_{f}$, note that every reasonable measure on the shift with transition matrix $A$ gives zero weight to the set of periodic points. This is for instance the case for the shift-invariant measure of maximal entropy, or any more general Gibbs measure (among them non trivial Bernoulli measures). 

\subsection{Boundary of uniformly hyperbolic diffeomorphisms}
To finish the proof of the Theorem it remains to show that the map $F$ is on the boundary of uniformly hyperbolic diffeomorphisms.

We consider a very small parameter $\theta>0$ and then set 
$$F_{\theta}(\xi+(x,y))=F(\xi)+(-\eps_{1} y,bx-cy(y^2+x+\theta)).$$

To get an idea why $F_{\theta}$ is hyperbolic, we just point out that the parameter $\theta$ 
straightens out the cubics in the post-critical zone, thus improves the estimates. Now, we effectively prove it.

Close to the image of $\xi$, the new unstable cone field is defined by 
$$\mathsf{C}^{u}(F_{\theta}(x,y))=\{\mathbf{v }=(v_{x},v_{y}),\ |v_{y}|\ge
A(3y^2+x+\theta)|v_{x}|\}.$$

We recall that the main point to get hyperbolicity for $F$ was to prove that Inequality \eqref{equ4-cone-instable} held, \ie for $u\ge 1$
$$
\left|\frac{\lambda^{k_{0}}(b-c\beta)}{\s^{k_{0}^{(1)}}\rho^{k_{0}^{(2)}}Au(3y^2+x)\eps_{1}}-\frac{c}{
\eps_{1}}(3\beta^2+\alpha)\right|\ge A(3\beta^2+\alpha).
$$
The way to prove this was to show that the first term in the left hand side was smaller than the half of the second term, and that half of the second term (in the left hand size) was larger than the term in the right hand side. 

Introducing the parameter $\theta>0$ we have to prove 

\begin{equation}
\label{equ-cone-borderunif}
\left|\frac{\lambda^{k_{0}}(b-c\beta)}{\s^{k_{0}^{(1)}}\rho^{k_{0}^{(2)}}Au(3y^2+x+\theta)\eps_{1}}-\frac{c}{
\eps_{1}}(3\beta^2+\alpha+\theta)\right|\ge A(3\beta^2+\alpha+\theta).
\end{equation}

Note that  for $\theta>0$, if 
$$0\le \frac{\lambda^{k_{0}}(b-c\beta)}{\s^{k_{0}^{(1)}}\rho^{k_{0}^{(2)}}Au(3y^2+x)\eps_{1}}\le
\frac{c}{2\eps_{1}}(3\beta^2+\alpha)$$
holds, then 
$$0\le \frac{\lambda^{k_{0}}(b-c\beta)}{\s^{k_{0}^{(1)}}\rho^{k_{0}^{(2)}}Au(3y^2+x+\theta)\eps_{1}}\le
\frac{c}{2\eps_{1}}(3\beta^2+\alpha)\le \frac{c}{2\eps_{1}}(3\beta^2+\alpha+\theta)$$
also holds.
Consequently \eqref{equ-cone-borderunif} holds.

It is left to the reader to check that any other required condition holds for $\theta$ sufficiently small, as $|y|\le \be_{max}$ very small and we always considered strict inequalities to allow some small perturbation. 

We also point out that exchanging $-cyx$ by $-c(x+\theta)y$ does not change the result  of the uniqueness of the real root when we used Cardan's method. In fact, it improves the result. 

It now remains to show that $F_{\theta}$ is uniformly hyperbolic. This follows from the next lemma:
\begin{lemma}
\label{lem-unifbordereu}
There exists $N=N(\theta)$ such that for every $M$ in $\Lambda$ there exists $0\le n\le N$ such that for every $\mbf{v}\in\mathsf{C}^{u}(M)$, $|DF_{\theta}^{n}(M).\mbf{v}|\ge 2|\mbf{v}|$. 
\end{lemma}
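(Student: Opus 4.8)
The plan is to show that the unstable cone field for $F_\theta$ is uniformly expanded, in a bounded number of steps, at every point of $\Lambda$. The key new feature is the parameter $\theta>0$: in the post-critical zone the cone $\mathsf{C}^u(F_\theta(\xi+(x,y)))$ has slope $A(3y^2+x+\theta)\ge A\theta>0$, so the cone never degenerates to the vertical line, and on entering the critical tube the horizontal contraction factor $\lambda^{k_0}$ is compensated by a vertical expansion that is now \emph{uniformly} bounded below. Concretely, I would revisit the computation of Lemma~\ref{lem-hyperbolicite-nin-nout}: the expansion between $F^{-n}(M)$ and $M$ is $\s^n$, the expansion between $F(M)$ and $F^{n_+}(M)$ in the unstable cone is at least $\s^{n_+}A(3\beta^2+\alpha+\theta)\ge \s^{n_+}A\theta$, and the one-step factor across $DF_\theta(M)$ is at least $\eps_1$. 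Since $n_-(M)\ge 5$ for points with a critical tube, and since (by the choice of $F(\xi)$) there is always at least one free moment between consecutive critical tubes giving an honest factor $\ge\rho>3$, the only way expansion can fail to reach a fixed factor $2$ in a bounded time is inside a single critical tube whose length $n_-+1+n_+$ is short. But for such a tube the entrance expansion $\s^{n_-}\ge\s^5$ already exceeds $2$.

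\textbf{Key steps.} First I would record that for $M\in\Lambda$ outside the critical orbit the forward orbit decomposes into critical tubes and free moments (as in the discussion preceding Proposition~\ref{prop-expansion}), with at least one free moment separating consecutive tubes. Second, I would show that along any free moment $DF_\theta=DF_0$ expands every vector of the (now uniformly non-degenerate) unstable cone by a factor $\ge\rho$, using that the cone slope is bounded below by $\min(A\theta,\frac{Ad}{3c\bm})$ and that this is $>1/\eps_1$, so that after one step the expanded vector is again genuinely ``vertical''; hence a free moment alone already gives factor $>2$. Third, for a critical tube $[-n_-,n_+]$ with $n_-\ge 5$: if $n_-\ge 1$ then the entrance phase gives $\s^{n_-}\ge\s^5>2$, so $n\le n_-+1\le N$ suffices; and the remaining subtlety — a point $M$ sitting \emph{inside} the post-critical part of a tube, so that the entrance phase is in the past — is handled by the free-moment argument applied just after time $n_+$, which is reached within $n_+ +1$ steps, and $n_+$ is bounded because $\bm$ being small forces $n_+ = n_+(M)$ to be controlled by $\s^{n_+}3c\bm(3y^2+x)\approx d$, i.e. $n_+\le$ a constant depending only on the fixed parameters and $\theta$. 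Taking $N$ to be the maximum of these finitely many bounds finishes the lemma; uniform hyperbolicity of $F_\theta$ on $\Lambda$ then follows by the standard cone-field criterion (iterating the bound along the orbit gives exponential growth, hence a genuine hyperbolic splitting with uniform rates), and with $F_\theta\to F$ as $\theta\to 0$ this places $F$ on the boundary of the uniformly hyperbolic diffeomorphisms.

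\textbf{Main obstacle.} The delicate point is the uniform bound on $n_+$, the length of the post-critical part of a critical tube. In the non-uniform setting $n_+$ can be arbitrarily large (it is why $F$ itself is only non-uniformly hyperbolic), so the argument must genuinely use $\theta>0$: with $\theta>0$ the exit condition $\s^{k_1}\rho^{k_2}|bx-cy(y^2+x+\theta)|\le d$ is violated after a number of steps bounded in terms of $\log(d)/\log(\rho)$ and the minimal value $c\theta\cdot(\text{stuff})$ of the relevant expression, uniformly in $M$; equivalently, straightening the cubics removes the tangency that was responsible for unbounded tube length. I would make this precise by showing that for $F_\theta$ the critical orbit of $\xi$ is actually transverse (no longer tangent), so the ``critical tubes'' have uniformly bounded length, and then the counting above yields an explicit $N(\theta)$ with $N(\theta)\to\infty$ as $\theta\to 0^+$, consistent with $F$ lying exactly on the boundary.
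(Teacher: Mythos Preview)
Your proposal has a genuine gap: the claim that $n_+$ (the post-critical part of the critical tube) is uniformly bounded once $\theta>0$ is false. At $(x,y)=(0,0)$, i.e.\ at $M=\xi\in\Lambda$, the vertical displacement $bx-cy(y^2+x+\theta)$ vanishes; for points in $\mr\cap\Lambda$ close to $\xi$ it is arbitrarily small, so the escape time from the $d$-neighborhood of the forward orbit of $\xi$ is unbounded. Destroying the tangency (the intersection $W^u(P_b)\cap W^s(P_f)$ becomes transverse for $F_\theta$) is not the same thing as uniformly bounding tube lengths: the former concerns the angle between the invariant manifolds at the intersection point, the latter concerns how small the quantity $|bx-cy(y^2+x+\theta)|$ can be over $\Lambda\cap\mr$, and that infimum is $0$.

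The paper's argument is simpler and is essentially already present in your first paragraph, which you then abandon. The key point is the uniform lower bound $A\theta$ on the cone slope: during \emph{any} stretch of linear dynamics (post-critical or free), a cone vector is expanded by at least $A\theta\,\rho^{k}$ after $k$ steps. Fix $N'$ with $A\theta\,\rho^{N'}>2$. If the next $N'$ iterates of $M$ avoid $\mr$, you are done in $N'$ steps. If the orbit exits the critical tube before $N'$ steps, inequality~\eqref{equ2.2-expansion 0-m} already gives a factor $\ge Ad\eps_1/(3c\bm)>2$. The remaining case (entering $\mr$ within $N'$ steps) costs at most $N'$ additional linear steps after the passage. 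Hence $N=2N'$ works, with no control on $n_+$ required. Your observation that the expansion from $F(M)$ to $F^{n_+}(M)$ is at least $\s^{n_+}A\theta$ is exactly what makes large $n_+$ harmless rather than an obstacle.
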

\begin{proof}
Note that any vector in some unstable cone is more vertical than $(1,A\theta)$. In the following, we consider $N'$ such that $A\theta\rho^{N'}>2$. 

Now consider any point $M$ and any vector $\mbf{v}\in\mathsf{C}^{u}(M)$. Assume $M$ is very close to $F(\xi)$. We call $n_{esc}(M)$ the maximal integer such that $|F^{j+1}(M)-F^{j+1}(\xi)|<d$  holds for every $j\le n_{esc}(M)$. 

If $n_{esc}(M)\ge N'$, then  $|DF_{\theta}^{N'}(M).\mbf{v}|\ge 2|v|$. 

If  $n_{esc}(M)<N'$, after $n_{esc}(M)$ iterates Inequality\eqref{equ2.2-expansion 0-m} shows that $\mbf{v}$ has been expanded by $DF_{\theta}^{n_{esc}(M)}(M)$ by a factor greater than $\disp\frac{Ad\eps_{1}}{3c\be_{max}}$. This last term is supposed much larger than $1$ (see condition \eqref{equ-grandApour expansion}) and can thus be assumed larger than 2. 

If $M$ is not in $F(\mr)$ and needs at least than $N'$ iterates to reach $\mr$ then we are done. 
If $M$ is not in $F(\mr)$ and needs less that $N'$ iterates to reach $\mr$, we have to consider several cases.

If $M$ is in the post-critical part of some critical tube, that is $M=F^{k}(M')$, with $k\le n_{+}(M')$ (see Def. \ref{def-crit-tub}) then 2 situations occur :
Either  the vectors at  border of the unstable cones are contracted by $DF(M)$, but when the point exits the critical time (which happens before $N'$ iterates because $F^{N'}(M)$ belongs to $\mr$ and $\xi$ never returns to $\mr$), the vectors have been expanded (case $n_{esc}(M)<N'$). 
Or the vectors at the border of the unstable cones are expanded and then, the worst case is than we need $N'$ more iterates after reaching $\mr$ to be sure we get an expansion greater than 2. 
Therefore, $N(\theta)=2N'$  satisfies the condition. 
\end{proof}

We claim that the factor $\theta$ implies that $DF_{\theta}(\mathsf{C}^{u})$ is  uniformly included in the interior of $\mathsf{C^{u}}$ as illustrated on Figure \ref{fig-cone-unif}. The security angle is uniformly proportional to $\theta$. Therefore, the angle between $E^{u}$ and $E^{s}$ is uniformly bounded away from zero. The assumption $|\det(DF_{\theta})|<1$ yields uniform contraction in the stable direction, and $F_{\theta}$ is uniformly hyperbolic. Moreover, the stable and the unstable vector fields are Lipschitz when restricted to stable or unstable local manifolds. 

\begin{figure}[htbp]
\begin{center}
\includegraphics[scale=.4]{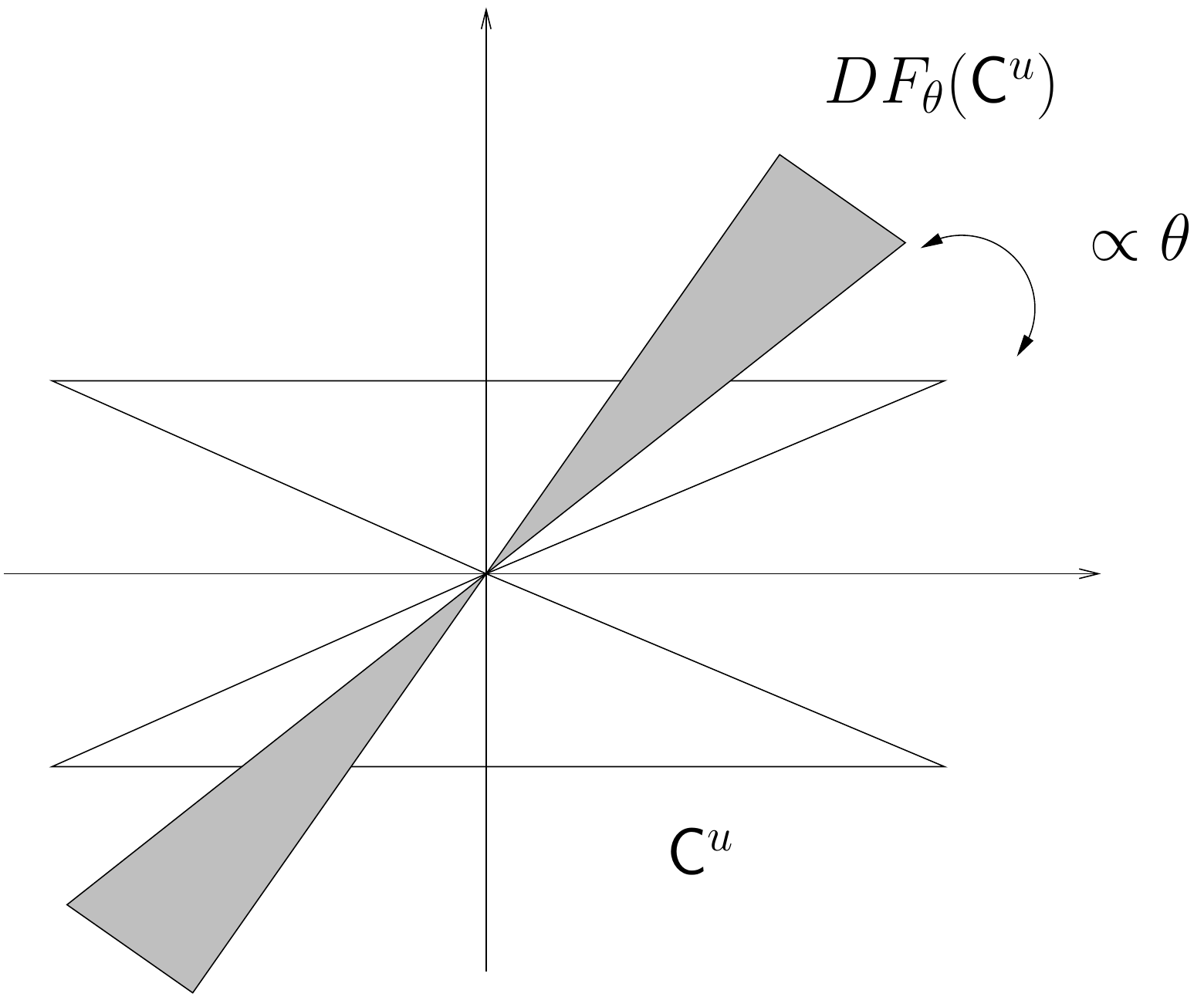}
\caption{Image of unstable cone strictly inside the unstable cone}
\label{fig-cone-unif}
\end{center}
\end{figure}

This finishes the proof of the Theorem. 

\subsection{Equilibrium states}

  We say 
that a $F$-invariant probability measure $\mu$ is an {\em equilibrium state} for $F$ w.r.t. a 
potential $\phi: \Lambda \to \mathbb{R}$ if it satisfies
          $$h_{\mu} (F) + \int \phi \, d\mu  = \sup_\eta \left\{ h_{\eta} (F) + \int \phi \, d\eta 
\right\},  $$
where the supremum is taken over all $F$-invariant probability measures. 

The H\"older continuous conjugacy $\Theta_{F}:\Sigma_A^9\to\Lambda$ allows us to define a H\"older continuous potential $\tilde\phi:\Sigma_A^9\to \R$ by 
$\tilde\phi(\ul x)=\phi(\Theta_{F}(\ul x))$.

The existence and uniqueness of equilibrium states for subshifts of finite type with respect to H\"older continuous potentials is a classical result of Ergodic Theory (see \cite{Ruelle78}). The conjugacy gives a correspondence between the invariant measures of the two systems. If $\mu_{\tilde\phi}$ is the unique equilibrium measure for the subshift, then $\Theta_{F}^*(\mu_{\tilde\phi})$ is the unique equilibrium state for $F$.

%%%%%%%%%%%%%%%%%%%%%%%%%%%%%%%%%%%%%%%%%%%%%%%%%%%
\subsection{Acknowledgments}
This work was partially supported by CNRS-CNPq cooperation, FP7-Irses 230844 DynEurBraz, FP7-Irses 318999 Breuds. 
Authors also want to deeply thank J. Rivera-Letelier for his help to find the right explicit form of the cubic tangency.

\bibliographystyle{plain}
\bibliography{mabiblio}
\listoffigures

%\noindent D\'epartement de  Math\'ematiques\\
%Universit\'e de Brest\\
%6, avenue Victor Le Gorgeu\\
%C.S. 93837, France \\
%\texttt{Renaud.Leplaideur@univ-brest.fr}\\
%\texttt{supported by DynEurBraz- FP7 IRSES 230844, CNRS-CNPq cooperation and invited by PUC-Chile.}\\
%\texttt{http://www.math.univ-brest.fr/perso/renaud.leplaideur}
%

\end{document}